\pgfplotsset{compat=1.18}
\newcommand\R{{\mathbb{R}}}
\renewcommand\P{{\mathbf{P}}}
\newcommand\E{{\mathbf{E}}}
\newcommand\diag{{\operatorname{diag}}}
\newcommand\dist{{\operatorname{dist}}}
\newcommand\Z{{\mathbb{Z}}}
\newcommand\col{{\mathbf{c}}}
\newcommand\row{{\mathbf{r}}}
\newcommand\var{{ \operatorname{Var}}}
\newcommand\al{\alpha}
\newcommand\la{\lambda}
\newcommand\1{\mathbf{1}}
\newcommand\Bc{{\mathbf c}}
\newcommand\Bd{{\mathbf d}}
\newcommand\Bf{{\mathbf f}}
\newcommand\Bu{{\mathbf u}}
\newcommand\Bv{{\mathbf v}}
\newcommand\Bw{{\mathbf w}}
\newcommand\Bx{{\mathbf x}}
\newcommand\By{{\mathbf y}}
\newcommand\BN{{\mathbf N}}
\renewcommand\P{{\mathbb P }}
\newcommand\CA{{\mathcal A}}
\newcommand\CE{{\mathcal E}}
\newcommand\CN{{\mathcal N}}
\newcommand\LCD{\mathbf{LCD}}
\newcommand\supp{\operatorname{supp}}
\newcommand\eps{\varepsilon}
\newcommand\bs{\backslash}
\newcommand\bq{\begin{equation}}
\newcommand\eq{\end{equation}}
\newcommand\wb{\overline}
\theoremstyle{plain}
\newtheorem{prop}{Proposition}[section]
\newtheorem{theorem}[prop]{Theorem}
\newtheorem{corollary}[prop]{Corollary}
\newtheorem{lemma}[prop]{Lemma}
\newtheorem{fact}[prop]{Fact}
\newtheorem{remark}[prop]{Remark}
\newtheorem{claim}[prop]{Claim}
\theoremstyle{definition}
\newtheorem{definition}[subsection]{Definition}
\begin{document}

\title{Eigenvalue Gaps of the Laplacian of Random Graphs}

\author{Nicholas J. Christoffersen\orcidlink{0000-0003-4893-8615}}
\address{Department of Mathematics\\ University of Colorado Boulder \\ Campus Box 395 \\ Boulder, CO  80309 USA}
\email{nicholas.christoffersen@colorado.edu}

\author{Kyle Luh\orcidlink{0000-0002-1822-3443}}
\address{Department of Mathematics\\ University of Colorado Boulder \\ Campus Box 395 \\ Boulder, CO  80309 USA}
\email{kyle.luh@colorado.edu}
\thanks{K. Luh has been partially supported by Simons Grant MP-TSM-00001988.}

 \author{Hoi H. Nguyen\orcidlink{0000-0002-4590-9301}}
 \address{Department of Mathematics\\ The Ohio State University \\ 231 W 18th Ave \\ Columbus, OH  43210 USA}
 \email{nguyen.1261@math.osu.edu}
\thanks{Part of this work was completed while the third author was supported by a Simons Fellowship.}

\author{Jingheng Wang}
\address{Department of Mathematics\\ The Ohio State University \\ 231 W 18th Ave \\ Columbus, OH  43210 USA}
\email{wang.14053@buckeyemail.osu.edu}

\keywords{Eigenvalue Gaps, Spectral Gaps, Graph Laplacian, Quantum Random Walks, Random Matrices, Delocalization of Eigenvectors}

\subjclass[2020]{60B20, 60C05, 05C80, 15B52}

\begin{abstract} We show that, with very high probability, the random graph Laplacian has  simple spectrum. Our method provides a quantitatively effective estimate of the spectral gaps. Along the way, we establish results on affine no-gaps delocalization, no-structure delocalization, overcrowding and the presence of small entries in the eigenvectors of the Laplacian model. These findings are of independent interest.
\end{abstract}

\maketitle

\section{Introduction}\label{sect:intro}

Let $G_{n}=G(n,p)$ be a random Erd\H{o}s-R\'enyi graph with a fixed parameter $p$, and let $A_{n}$ be its adjacency matrix. Partly motivated by a question posed by Babai (related to the well-known graph isomorphism problem in theoretical computer science), it has been shown in \cite{TV-simple, NTV} that with high probability, the spectrum $\la_{1}(A_{n}) \le \dots \le \la_{n}(A_{n})$ of $A_{n}$ is simple. More precisely,
\begin{theorem}\cite[Theorem 2.7]{NTV}\label{thm:gap}   Let $0<p<1$ be fixed, and let $A$ be any constant. For any $\delta > n^{-A}$, we have
$$ \max_{1\le i\le n-1} \P(|\la_{i+1}(A_{n}) -\la_{i}(A_{n})|\le \delta n^{-1/2}) =O(n^{o(1)} \delta).$$
\end{theorem}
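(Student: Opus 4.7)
I would adopt the resolvent/secular-equation strategy of Tao--Vu, sharpened in \cite{NTV}. Fix an index $i$ and write
$$A_n = \begin{pmatrix} A_{n-1} & X \\ X^{T} & 0 \end{pmatrix},$$
with $A_{n-1}$ the $(n-1)\times(n-1)$ principal minor and $X \in \R^{n-1}$ an iid Bernoulli$(p)$ vector independent of $A_{n-1}$. Let $\mu_1 \le \cdots \le \mu_{n-1}$ be the eigenvalues of $A_{n-1}$ with orthonormal eigenvectors $u_1,\dots,u_{n-1}$. Away from the $\mu_j$'s, the eigenvalues of $A_n$ solve the secular equation
$$-\la - \sum_{j=1}^{n-1}\frac{|u_j^{T} X|^2}{\mu_j-\la}=0,$$
and by Cauchy interlacing $\la_i(A_n)\le \mu_i\le \la_{i+1}(A_n)$. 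Writing the secular equation at $\la_i$ and $\la_{i+1}$, subtracting, and dividing by $\la_{i+1}-\la_i$ produces the identity
$$|u_i^{T} X|^2 = (\la_{i+1}-\mu_i)(\mu_i-\la_i)\,S,\qquad S:=1+\sum_{j\ne i}\frac{|u_j^{T} X|^2}{(\mu_j-\la_i)(\mu_j-\la_{i+1})}\ge 1,$$
with every summand in $S$ nonnegative. A gap $\la_{i+1}-\la_i\le \delta/\sqrt n$ thus forces the deterministic conclusion $|u_i^{T} X|^2\le (\delta^2/4n)\,S$.

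Next I would isolate a high-probability event $\CE$ on which (a) $\LCD(u_i)\ge n^{A+2}$ --- the ``no-structure'' delocalization of a single eigenvector; (b) $\min_{j\ne i}|\mu_j-\mu_i|\ge n^{-A-3}$ and no more than $n^{o(1)}$ of the $\mu_j$ lie in any window of length $n^{-1/2}$ around $\mu_i$ (level spacing and overcrowding); and (c) $\|u_j\|_\infty \le n^{-1/2+o(1)}$ for every $j$ ($\ell^\infty$ delocalization). Under (b)--(c), a dyadic decomposition of the sum defining $S$, grouping indices by the scale of $|\mu_j-\mu_i|$ and using (c) to bound each $|u_j^{T} X|^2$ by $n^{o(1)}$ after centering $X$ to remove its mean, forces $S \le n^{o(1)}$ off a subevent of probability $\le n^{-A}$. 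Each of (a)--(c) is precisely the sort of quantitative delocalization/overcrowding input that this paper and \cite{NTV} build.

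On $\CE$, $X$ is independent of $A_{n-1}$ and $u_i$ has LCD at least $n^{A+2}$. The inverse Littlewood--Offord theorem of Rudelson--Vershynin/Tao--Vu gives, for any $r \ge n^{-1/2}$,
$$\sup_{t\in\R}\,\P\bigl(|u_i^{T} X - t|\le r \,\big|\, A_{n-1}\bigr) \le n^{o(1)}\bigl(r + \LCD(u_i)^{-1}\bigr).$$
Applying this with $r = n^{o(1)}\delta/\sqrt n$ (absorbing $S^{1/2}$ into the $n^{o(1)}$) and using $\delta \ge n^{-A}$ to dominate the LCD term, the conditional probability is at most $n^{o(1)}\delta/\sqrt n \le n^{o(1)}\delta$. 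Adding $\P(\CE^c)\le n^{-A}\le\delta$ and taking the maximum over $i$ (absorbed into the $n^{o(1)}$) completes the bound.

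The real work sits in Step 2, and above all in input (a). Quantitative no-structure delocalization of a single eigenvector of a discrete random symmetric matrix is substantially harder than the analogous statement for a random vector, because the eigenvector depends nonlinearly on the matrix entries; the standard route combines an inverse Littlewood--Offord analysis with a net argument over structured approximate eigenvectors at each LCD scale, requiring careful counting of candidates and a tight trade-off between small-ball and entropic costs. The overcrowding input (b) is comparably delicate: a single $\mu_j$ too close to $\mu_i$ would inflate one term of $S$, so level repulsion must be established at scales finer than the target gap $\delta/\sqrt n$ itself --- which is what drives the inductive, self-improving structure typical of such arguments.
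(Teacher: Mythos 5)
Your approach — deriving the secular equation for $A_n$ in terms of $A_{n-1}$ and subtracting at $\lambda_i$ and $\lambda_{i+1}$ to obtain $|u_i^T X|^2 = (\lambda_{i+1}-\mu_i)(\mu_i - \lambda_i)\,S$ — is a genuinely different route from the one the paper outlines for Theorem~\ref{thm:gap}, which is the method of \cite{NTV}. That method multiplies the top $n-1$ rows of the eigenvector equation of $A_n$ on the left by an eigenvector $\Bv$ of $A_{n-1}$ to get the \emph{linear} identity $|b\,\Bv^T\col_n| = |\lambda_i(A_{n-1})-\lambda_i(A_n)|\,|\Bv^T\Bw|$; Cauchy interlacing bounds the eigenvalue difference by $\lambda_{i+1}-\lambda_i$, $|\Bv^T\Bw|\le 1$, and a lower bound $|b|\gg n^{-1/2+o(1)}$ (a no-gaps input on one coordinate of the eigenvector of $A_n$) then reduce the gap event to $|\Bv^T\col_n|\le n^{o(1)}\delta$. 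Your secular-equation route is closer to the resolvent treatment in \cite{TV-acta}, which the paper points to in a footnote; both routes then feed into the same anti-concentration engine (large $\LCD$ of eigenvectors of the minor), but the auxiliary input they need is different — $|b|$ not too small versus $S$ not too large.

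The genuine gap in your proposal sits in the control of $S$. First, a quantitative slip: $S$ is not $n^{o(1)}$ but typically of order $n$, since with bulk spacing $|\mu_j-\mu_i|\asymp |j-i|/\sqrt n$ and $|u_j^T X|^2\asymp 1$ one has $\sum_{j\ne i}|u_j^T X|^2/\bigl((\mu_j-\lambda_i)(\mu_j-\lambda_{i+1})\bigr)\asymp n\sum_{k\ge 1}k^{-2}\asymp n$. This alone is harmless — with $S\le n^{1+o(1)}$ the deterministic inequality gives $|u_i^T X|\le \delta S^{1/2}/(2\sqrt n)\le n^{o(1)}\delta$ and the small-ball estimate still closes — but your claimed final bound $n^{o(1)}\delta/\sqrt n$ is off by a factor of $\sqrt n$. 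More seriously, condition (b) controls $|\mu_j-\mu_i|$, whereas the denominators in $S$ are $(\mu_j-\lambda_i)(\mu_j-\lambda_{i+1})$, and for $j=i-1$ interlacing only gives $\mu_{i-1}\le \lambda_i\le\mu_i$: the \emph{mixed} difference $\lambda_i-\mu_{i-1}$ can be far smaller than $\mu_i-\mu_{i-1}$, so (b) does not bound the near-resonant terms from below. A lower bound on $\lambda_i-\mu_{i-1}$ is itself an anti-concentration statement (governed by $|u_{i-1}^T X|$) that none of your stated inputs (a)--(c) supplies; this is exactly the circularity you flag at the end, and the proposal stops short of the self-improving argument that would resolve it. By contrast, the \cite{NTV} identity never produces a sum like $S$: the only eigenvalue difference appearing is $|\lambda_i(A_{n-1})-\lambda_i(A_n)|$, which interlacing bounds directly by the conditioned gap, so no lower bound on any mixed gap is needed.
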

This result was extended to sparse graphs in \cite{LV,LL}, and to Wigner matrices (with i.i.d.~subgaussian entries of mean zero and variance one) in \cite{CJMS-spt} with better bounds. 

From a random matrix perspective, gap sizes were, in fact, the original motivation for introducing random matrices in physics. Further, gap statistics are a staple of the field as can be seen in \cite{Wigner-SC, Bai-book, AGZ-book,EYY-bulk, tao-gap, EKYY-gaps} and the references therein.  More specifically, the minimal gap in Wigner matrices has also received much attention \cite{Vinson-spacing, BAB-extreme, FTW-gaps, bourgade-extreme}, where, in the latest work, it was shown to be universal up to some appropriate scaling.

Although these results are quite satisfactory, many interesting open questions remain. These include obtaining the optimal quadratic repulsion probability bound $O(\delta^{2})$ (under the assumption $\delta \ge n^{{-\omega(1)}}$) in Theorem \ref{thm:gap}, as well as extending results to other models of random matrices with dependent entries. The goal of this paper is to pursue the second direction for a natural model of random matrices that existing results and techniques do not seem to cover -- the Laplacian matrix. 

Given a graph $G_{n}$ with $n$ vertices, the (combinatorial) Laplacian of $G_{n}$, denoted by $L_{n}$, is the matrix 
$$L_{n}= D_{n}-A_{n},$$
where $D_{n}=\diag(d_{ii})_{1\le i\le n}$ is the diagonal matrix of the vertex degrees, $d_{ii} = \sum_{j=1}^{n} a_{ij}$.

The graph Laplacian is a discrete analogue of the Laplacian operator on manifolds. This matrix is a key object in spectral graph theory and is often more relevant than the adjacency matrix. The spectrum of this matrix can reveal expansion properties, isoperimetric bounds, random walk mixing times, and even the number of spanning trees of a graph \cite{godsil2001algebraic}.  Spectral algorithms involving the Laplacian have been spectacularly successful in fundamental tasks such as graph partitioning \cite{HRMP-partitioning}, clustering \cite{hagen1992new}, graph drawing \cite{fowler1994molecular}, parallel computation \cite{simon1991partitioning}, graph synchronization \cite{Band}, and numerous others \cite{Mohar-CO}. Laplacians arise in diverse fields ranging from physics to topological data analysis \cite{fyodorov-reactance, wei2023persistent, beineke2004topics}.

For the Laplacian operator on manifolds, the eigenvalues and properties of eigenfunctions have been heavily studied in the literature; see, for instance, \cite{Uhlenbeck, CZY-gaps, ChavelRG, One, LWang} and the references therein. One of the most basic questions concerns whether the eigenvalues are distinct and whether the eigenfunctions are Morse functions. Roughly speaking, it has been shown that for a generic metric, the eigenvalues are simple and the eigenfunctions are smooth. Along these lines, genericity of simple eigenvalues for a metric graph was shown in \cite{Friedlander}. Simplicity of eigenvalues and non-vanishing of eigenfunctions of a quantum graph are considered in \cite{Berkolaiko}.

For (actual) graphs, the ``spectral gap'' of the Laplacian matrix is the difference between the second smallest eigenvalue and the smallest eigenvalue (which is necessarily zero).  This important quantity is tied to the expansion property of the graph and is of interest in many combinatorial optimization problems \cite{Lubotzky-expanders}.  However, there are many settings in which the gaps between other consecutive eigenvalues play an important role.  In general, the minimum gap size is closely tied to the numerical stability of many spectral algorithms \cite{GvL-Matrix, Demmel-NLA}.  For the Laplacian in particular, the minimum gap size or the $k$-th gap size is a crucial parameter and has a specific meaning in various applications. The gap $\lambda_{n-k} - \lambda_{n-k+1}$ indicates the stability of partitioning the graph into $k$ clusters \cite{AEGL-stability}. 

Next, we highlight a key motivation for our work: quantum random walks on graphs. Unlike classical random walks, where mixing times are governed by the spectral gap of the transition matrix, the mixing time of quantum random walks is determined by the smallest eigenvalue gap of the Laplacian or the adjacency matrix \cite{AAKV-qrw, kempe2003quantum, CLR-quantumwalks}. These quantum walks serve as the foundation for efficient quantum algorithms that often significantly outperform their classical counterparts. Many of these quantum algorithms implicitly assume that the spectrum of the graph Laplacian is simple \cite{AAKV-qrw, CLR-analog, childs2003exponential, childs2004quantum}. More specifically, let $G$ be a graph with vertex set $V = \{1, \dots, n\}$. The Hamiltonian governing the quantum walk on $G$ is given by $\gamma L_G$, where $L_G$ is the graph Laplacian and $\gamma$ is a scaling factor. Given an initial state $|\psi_0\rangle \in \mathcal{H}$, the state of the walker at time $t$ evolves as $|\psi(t)\rangle = e^{-i \gamma L_G t} |\psi_0\rangle$. The probability that the walker is at node $|f\rangle$ after time $T$ is  
\[
P_f(T) = \frac{1}{T} \int_0^T |\langle f | e^{-i \gamma L_G t} |\psi_0\rangle|^2 dt.
\]  
The limiting distribution is given by $P_f(T \to \infty) := \lim_{T \to \infty} P_f(T) = \sum_{i=1}^{n} |\langle f | \Bv_i \rangle \langle \Bv_i | \psi_0 \rangle|^2$, where $\Bv_i$ are the eigenvectors of $L_G$. It can be shown that the discrepancy in the $L^1$-norm is  
\[
D(P_T) = \sum_{f} \left| P_f(T) - P_f(T \to \infty) \right| \leq \sum_{i \neq l} \frac{2 |\langle \Bv_i | \psi_0 \rangle| \cdot |\langle \Bv_l | \psi_0 \rangle|}{T |\lambda_i - \lambda_l|}.
\]  

Thus, controlling the sum  $\sum_{i \neq l} \frac{1}{|\lambda_i - \lambda_l|}$ is crucial, as it quantifies the separation of the entire spectrum. 

Beyond the applications above, the minimal eigenvalue gap of the Laplacian matrix also plays a significant role in many graph-based learning tasks. In particular, graph neural networks (GNNs) have become the industry standard for graph-based machine learning tasks \cite{zhou2020graph}. However, highly symmetric graphs often lead to slow convergence rates and low accuracy \cite{li2022expressive}. A promising solution is to enrich vertex features to break symmetries, often by embedding the graph into Euclidean space using the Laplacian matrix \cite{abboud2020surprising, li2020distance}. In these algorithms, the rate of convergence is determined by the size of the smallest eigenvalue gap of the Laplacian matrix \cite{wang2022equivariant, zou2020graph, ma2023laplacian, GBR-GNN}. Further applications of minimal eigenvalue gaps can be found in graph synchronization \cite{nyberg2014laplacian, Nyberg-geographs}.  

Motivated by all these directions, we show the following main result.

\begin{theorem}[Eigenvalue Gaps of Laplacian]\label{thm:gap:lap} Let $0<p<1$ be fixed and let $A$ be any constant. Let $G_{n}=G(n,p)$ be a random Erd\H{o}s-R\'enyi graph with parameter $p$, and let $L_{n}$ be the Laplacian of $G_{n}$. Then for any $\delta \ge n^{-A}$ we have
$$\max_{1\le i\le n-1} \P(|\la_{i+1}(L_{n}) -\la_{i}(L_{n})|\le \delta n^{-1/2}) =O(n^{o(1)} \delta),$$
where the implied constants are allowed to depend on $p$ and $A$.
\end{theorem}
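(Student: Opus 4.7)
The plan is to adapt the conditioning-plus-anti-concentration framework of Tao--Vu and Nguyen--Tao--Vu (as in Theorem \ref{thm:gap}) from the adjacency setting to the Laplacian. The observation that makes the diagonal--off-diagonal dependence manageable row-by-row is that the eigenvalue equation $L_n v = \lambda v$, read at coordinate $k$, becomes
\[
\sum_{j \ne k} a_{kj}(v_k - v_j) = \lambda v_k,
\]
a linear functional of the \emph{independent} Bernoullis $(a_{kj})_{j \ne k}$ with coefficients $c_j := v_k - v_j$ and target $\lambda v_k$. Thus, per row, the Laplacian behaves like a Wigner-type matrix with coefficients designed by the eigenvector itself, and the diagonal entry $d_{kk}$ is absorbed invisibly into this rewriting.

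Suppose for contradiction that for some index $i$ the event $|\lambda_{i+1}(L_n) - \lambda_i(L_n)| \le \delta n^{-1/2}$ holds with probability substantially larger than $n^{o(1)} \delta$, and let $u, v$ be the corresponding unit orthonormal eigenvectors. I would expose row and column $n$ (or, if needed, another carefully chosen index) and condition on the remaining entries. The minor is not itself a Laplacian but rather the Laplacian of $G(n-1,p)$ plus an \emph{independent} diagonal perturbation $\diag(a_{jn})_{j<n}$; applying the delocalization statements advertised in the abstract (no-gaps, affine no-gaps, and no-structure/large LCD) uniformly over this diagonal shift, one obtains unit approximations $\tilde u, \tilde v$ of $u, v$ on a suitable $\varepsilon$-net whose coefficient vectors $(\tilde u_n - \tilde u_j)_j$ and $(\tilde v_n - \tilde v_j)_j$ have mass spread over $n$ coordinates and exponentially large LCD. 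A two-dimensional inverse Littlewood--Offord bound applied to the pair of row-$n$ eigenvector equations then shows that the joint probability both hold within error $O(\delta n^{-1/2})$ is at most $n^{o(1)} \delta$ per net point. Summing this per-net-point bound and combining it with the overcrowding estimate (to union bound over the index $i$) and with the small-entries statement (to handle the rare event that $v_n$ is anomalously small, in which case one selects a different row to expose) yields the claimed bound.

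The main obstacle is the conditioning step: the minor of a Laplacian is not a Laplacian, and the diagonal perturbation $\diag(a_{jn})_{j<n}$ couples the conditioned-on randomness to the anti-concentration randomness. Consequently, the standard delocalization and LCD arguments must be upgraded to hold \emph{uniformly} over a one-parameter family of spectral shifts of the form $L_n - \mu I$; this is exactly the content of the affine no-gaps delocalization. Establishing this uniform control, together with the overcrowding bound that the index-$i$ union demands and the small-entries statement that lets us choose an advantageous row to expose, is where the bulk of the technical effort will lie. Once these ingredients are in hand, the rest of the argument should follow the adjacency blueprint with only cosmetic changes.
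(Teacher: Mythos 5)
Your proposal correctly identifies the central obstacle---that exposing a row/column of the Laplacian does not leave the minor unchanged, because the diagonal of the minor carries the exposed entries---but the mechanism you propose for overcoming it does not actually decouple the randomness, which is where the paper's real innovation lies.

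Concretely: after conditioning on the off-diagonal adjacency entries $a_{jk}$ for $j,k < n$, the minor $L_{n-1}^{\mathrm{Lap}}$ and the exposed column $\col_n$ are \emph{both} functions of the same fresh variables $(a_{jn})_{j<n}$, since $\col_n = -(a_{jn})_j$ and the minor picks up the diagonal shift $\diag(a_{jn})_j$. Your plan is to prove the delocalization/LCD statements ``uniformly over this diagonal shift'' and then run anti-concentration in $(a_{jn})$. But uniformity over the shift does not make the eigenvector $\Bv$ of the minor independent of $(a_{jn})$; it only ensures that $\Bv(a)$ is unstructured \emph{for each} $a$. The small-ball probability you need is for $\Bv(a)\cdot \col_n(a)$ where both factors vary with $a$, and there is no inverse-Littlewood--Offord theorem (one- or two-dimensional) that applies to such a self-referential expression. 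There are also far too many diagonals ($2^{n-1}$) to union bound a $1-n^{-\omega(1)}$ delocalization event over all of them. This is exactly the ``deadlock'' the paper flags in Section~\ref{sect:others}, and it is precisely why they do not expose a single column.

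What the paper does instead is to expose the last \emph{two} columns, apply the spectrum-preserving orthogonal rotation replacing $(\col_{n-1},\col_n)$ by $\bigl((\col_{n-1}+\col_n)/\sqrt2,\,(-\col_{n-1}+\col_n)/\sqrt2\bigr)$, and then use a neighbor-switching resampling: on the set $I_{n-2}$ of indices $j$ adjacent to exactly one of $v_{n-1},v_n$, the second rotated column is a vector of fresh Bernoulli signs, genuinely independent of the rotated minor $\wb L'_{n-1}$. That is the decoupling device; without it there is no residual randomness to anti-concentrate against. Two further misalignments in your plan: the overcrowding estimate (Theorem~\ref{thm:lap_overcrowding}) is used inside the no-gaps/no-structure delocalization proofs to approximately invert a large square block $M_{22}$, not ``to union bound over the index $i$''; and the small-coordinates result (Theorem~\ref{thm:smallcoordinates}) is a downstream corollary proved by the same switching trick, not an ingredient for choosing which row to expose---the fact that $|b|\gtrsim n^{-1/2-o(1)}$ for some coordinate is handled directly by no-gaps delocalization (Claim~\ref{claim:non-gap:spread}). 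Your row-by-row rewriting $\sum_{j\ne k} a_{kj}(v_k-v_j)=\lambda v_k$ is indeed the correct device for absorbing the diagonal dependence, and the paper uses it (e.g.\ in \eqref{E:toget}), but only \emph{inside} the delocalization proofs, not as a substitute for the switching construction that furnishes the independent randomness in the final step.
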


In particular, the spectrum is simple with high probability. This provides a theoretical justification for the empirical success or implicit assumptions in the cited applications. More quantitatively, by taking the union bound over all $i$-th gaps, we find that, with probability $1-o(1)$, the minimum gap is at least of order $n^{{-3/2}-o(1)}$.

{\bf Notations.} Throughout this paper, we regard $n$ as an asymptotic parameter tending to infinity (in particular, we implicitly assume that $n$ is larger than any fixed constant, as our claims are all trivial for fixed $n$), and allow all mathematical objects in the paper to implicitly depend on $n$ unless they are explicitly declared to be ``fixed'' or ``constant''. We write $X = O(Y), X \ll Y$, or $Y=\Omega(X), Y \gg X$ to denote the claim that $|X| \le CY$ for some fixed constant $C$. We write $X=\Theta(Y)$ if $X\gg Y$ and $Y \gg X$. We also use $o(1)$ (and $\omega(1)$ respectively) to denote positive quantities that tend to zero (infinity) as $n \to \infty$.

For a square matrix $X$ of size $n$ and a number $\la$, for brevity, we write $X- \la$ instead of $X-\la I_{n}$. We use $\row_{i}(X)$ and $\col_{i}(X)$ to denote the $i$-th row and column of $X$, respectively. 
For historical reasons, for an $n \times k$ or \( k \times n \) matrix $X$ with \( k \leq n \), we let $0 \leq \sigma_k(X) \leq \cdots \leq \sigma_1(X)$ be the singular values of $X$. Conversely, for an $n \times n$ symmetric matrix $X$, we let $\lambda_1(X) \leq \cdots \leq \lambda_n(X)$ denote its eigenvalues and $\delta_i(X) = \lambda_{i+1}(X) - \lambda_{i}(X)$ be its $i^{\text{th}}$ eigenvalue gap.

We denote the discrete interval \( [m] = \left\{1, 2, \cdots, m \right\}\). For \(\Bv\in \R^{n} \) and \( I \subset [n] \), we denote \( \Bv_{I}\in \R^{I} \) the vector with entries \( v_i, \text{ for } i \in I \). Finally, throughout this paper, if not specified, any norm that appears is the standard Euclidean norm.

\section{Additional Main Results and Our Approach}\label{sect:others} 

\subsection{Eigenvalue Gaps for the Centered Model} It is also natural to study the centered Laplacian (also sometimes referred to as the Laplacian matrix or a Laplacian-type matrix)
 $$\wb{L}_{n} := L_{n} - \E L_{n}.$$
In a more general setting, we can start from a Wigner matrix $X_{n}=(x_{ij})_{1\le i,j\le n}$ (with i.i.d.~entries $x_{ij}, 1\le i<j\le n$ of mean zero and variance one) and form a centered Laplacian $\wb{X}_{n}$ from it by letting $\wb{x}_{ii} = \sum_{j, j\neq i} x_{ij}$ and $\wb{x}_{ij}=-x_{ij}$. Thus, the matrix $\wb{L}_{n}/\sqrt{p(1-p)}$ belongs to this family of random matrices.

It is known (see \cite{DJ-laplace}) that the empirical spectral distribution of a random centered Laplacian converges weakly to the free convolution of the semicircular law and the standard Gaussian, $\rho_{sc} \boxplus \BN(0,1)$. The gaps between extreme eigenvalues of the Laplacian have been studied \cite{J-low, CO-Gnp, campbell2022extreme}. Furthermore, among other things, it was shown in \cite{HL-Laplacian} that individual eigenvalue gaps in the bulk of the spectrum converge to the gap behavior of the GOE. However, these asymptotic results do not cover the basic question of whether $\wb{L}_{n}$ has simple spectrum with high probability. In this paper, we show the following result.
 
\begin{theorem}\label{thm:gap:lap'} Theorem \ref{thm:gap:lap} holds for the centered Laplacian model $\wb{L}_{n}$.
\end{theorem}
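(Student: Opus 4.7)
The plan is to reduce Theorem~\ref{thm:gap:lap'} to Theorem~\ref{thm:gap:lap} by exploiting the common kernel vector $\mathbf{1}$ that $L_n$ and $\wb{L}_n$ share. Both matrices annihilate $\mathbf{1}$, and a direct computation gives $\E L_n = pnI - pJ$, which acts on $V := \mathbf{1}^\perp$ as the scalar $pn$. Consequently $\wb{L}_n\big|_V = L_n\big|_V - pn\, I\big|_V$, so the two restrictions have identical eigenvalue gaps. Let $\mu_1 \le \cdots \le \mu_{n-1}$ denote the eigenvalues of $L_n\big|_V$. The spectrum of $L_n$ sorts as $0 \le \mu_1 \le \cdots \le \mu_{n-1}$ (since $L_n$ is positive semidefinite), while the spectrum of $\wb{L}_n$ is obtained by inserting the deterministic eigenvalue $0$ (from $\mathbf{1}$) into the shifted list $(\mu_j - pn)_{j=1}^{n-1}$. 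If $k$ is the largest index with $\mu_k < pn$, then for every $i \notin \{k, k+1\}$ the gap $\la_{i+1}(\wb{L}_n) - \la_i(\wb{L}_n)$ coincides with some gap $\mu_{j+1} - \mu_j$ of $L_n\big|_V$, i.e., a gap of $L_n$ at some position $\ge 2$. A union bound over the (at most two) candidate indices $j$, combined with Theorem~\ref{thm:gap:lap}, yields the desired bound $O(n^{o(1)} \delta)$ for all such $i$.

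This leaves the two ``transition'' gaps at $i = k$ and $i = k+1$, equal to $pn - \mu_k$ and $\mu_{k+1} - pn$ respectively. Bounding the probability that either is small reduces to the single-bin small-ball estimate
\[
\P\bigl(\exists\, j:\ |\mu_j - pn| \le \delta n^{-1/2}\bigr) = O(n^{o(1)} \delta),
\]
which is equivalent to showing that the smallest absolute-value eigenvalue of the $(n-1) \times (n-1)$ symmetric matrix $\wb{L}_n\big|_V$ is at least $\delta n^{-1/2}$ with probability $1 - O(n^{o(1)}\delta)$. I would prove this by feeding $\wb{L}_n\big|_V$ into the paper's main technical pipeline---the affine no-gaps and no-structure delocalization for Laplacian eigenvectors, combined with the Rudelson--Vershynin LCD-based inverse Littlewood--Offord arguments---applied at the spectral point $0$ rather than at a generic $\la$. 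Concretely, one writes any approximate null vector $\Bv$ of $\wb{L}_n\big|_V$ as orthogonal to $\mathbf{1}$, establishes its delocalization via the compressible/incompressible dichotomy together with LCD control, and then concludes via the distance/negative second moment identity adapted to the Laplacian's row dependence.

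The main obstacle is this transition-gap step rather than the reduction itself. The shift brings the spectral point $\la = pn$ into the bulk of $L_n$'s spectrum, so the required small-ball bound cannot come from crude tail estimates on extreme eigenvalues; it genuinely requires the delocalization machinery that the paper develops for Laplacian eigenvectors. The additional wrinkle is that the diagonal entries of $\wb{L}_n$ are determined by the off-diagonal entries in the same row, so the standard distance argument must respect this dependence along with the constraint $\Bv \perp \mathbf{1}$. Since the paper already handles exactly these two issues in its treatment of $L_n$, running the same machinery at the specific shift $\la = pn$ should close the proof without any essentially new technical innovation.
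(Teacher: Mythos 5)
Your reduction goes in the wrong logical direction. In the paper, Theorem~\ref{thm:gap:lap} (for $L_n$) is proved in Section~\ref{section:gap} \emph{as a corollary} of Theorem~\ref{thm:gap:lap'} (for $\wb{L}_n$), using essentially the same shared-kernel observation you make; the translation in \emph{that} direction is painless because $0=\la_1(L_n)$ is the extreme eigenvalue of the positive-semidefinite matrix $L_n$, so the only gap not inherited from $\wb{L}_n$'s spectrum is $\la_2(L_n)-\la_1(L_n)=\la_2(L_n)$, which is $\ge pn/3$ with probability $1-\exp(-\Theta(n))$ by a Weyl-plus-Chernoff argument. Reversing the reduction, as you propose, forces the zero eigenvalue into the \emph{bulk} of $\wb{L}_n$'s spectrum, and that is exactly where your two ``transition'' gaps live; they cannot be disposed of cheaply.

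That transition step is the genuine gap in the proposal, and it is the theorem in disguise. The event $\exists j:\ |\mu_j-pn|\le \delta n^{-1/2}$ is precisely the event that $\wb{L}_n$ has two eigenvalues within $\delta n^{-1/2}$ of each other -- namely $0$ (from $\1$) and some $\mu_j-pn$ (from $\1^\perp$) -- so bounding it is nothing other than Theorem~\ref{thm:gap:lap'} at the random index $i_0$ where $\la_{i_0}(\wb{L}_n)=0$. Invoking Theorem~\ref{thm:gap:lap} for the remaining positions does not help: in the paper that result is \emph{derived from} Theorem~\ref{thm:gap:lap'}, so the appeal is circular, and no independent proof of it is offered. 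To close the transition step you would have to run the entire pipeline anyway -- the rotation $\wb{L}_n\mapsto \wb{L}_n'$ that mixes the last two columns, the neighbor-reshuffling that recovers independent randomness in $\col_n(\wb{L}_n')_{I_{n-2}}$, Cauchy interlacing via \eqref{eqn:discussion:eig} against the principal minor $\wb{L}'_{n-1}$, Theorem~\ref{thm:LCD:lap} applied to that minor's eigenvector restricted to $I_{n-2}$, and the LCD small-ball bound Theorem~\ref{thm:small_ball_prob_LCD}. The paper simply runs that machinery once, at an arbitrary spectral location, proving Theorem~\ref{thm:gap:lap'} directly; your detour through $L_n$ saves nothing. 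You also omit the rotation $\wb{L}_n'$ entirely, which is the device that makes the reshuffling compatible with the Laplacian's dependent structure; without it the last column of $\wb{L}_n$ is determined by its principal minor, and the fresh-randomness step in the final small-ball estimate does not go through.
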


Next, we outline an approach \footnote{There are more advanced approaches to this problem, see for instance \cite[Eqn. 36]{TV-acta}.} from \cite{NTV} to prove Theorem \ref{thm:gap}, and highlight the challenges in analyzing the current model. Assume that $X_{n}$ is a symmetric/Hermitian matrix where $\la_{i+1} - \la_{i} \le \delta/n^{1/2}$.

Write
\begin{equation}\label{discussion:mn-split}
 X_n = \begin{pmatrix} X_{n-1} & \col_{n} \\ \col_{n}^* & x_{nn} \end{pmatrix},
\end{equation}
where $\col_{n}$ is the last column of \( X_{n} \) (without the final entry). From the Cauchy interlacing law, we observe that $\lambda_i(X_n) \le \lambda_i(X_{n-1})\le \lambda_{i+1}(X_n)$. Let $\Bu$ be a (unit) eigenvector of \( X_{n} \) with associated eigenvalue $\lambda_i (X_n) $. We write 
$\Bu= (\Bw,b)$, where $\Bw$ is a vector of length $n-1$ and $b$ is a scalar.  We have 
$$ \begin{pmatrix} X_{n-1} & \col_{n} \\ \col_{n}^{}* & x_{nn} \end{pmatrix} \begin{pmatrix} \Bw \\ b \end{pmatrix} = \lambda_i(X_n) \begin{pmatrix} \Bw \\ b \end{pmatrix} .$$
Extracting the top $n-1$ components of this equation, we obtain
$$ (X_{n-1}-\lambda_i(X_n)) \Bw + b \col_{n} = 0.$$
Let  $\Bv $ be a unit eigenvector  of $X_{n-1}$ corresponding to  $\lambda_i(X_{n-1})$. By multiplying the above equation on the left by $\Bv^T$, we get that
\begin{equation}\label{eqn:discussion:eig}
|b \Bv^T \col_{n} | = |\Bv^T  (X_{n-1}-\lambda_i(X_n)) \Bw| =|\lambda_i(X_{n-1})-\lambda_i(X_n)| |\Bv^T \Bw| .
\end{equation}

We conclude that if $\la_{i+1} - \la_{i} \le \delta n^{-1/2}$ holds, then  $|b \Bv^T \col_{n}| \le \delta n^{-1/2} $. If we assume that $|b|\gg n^{-1/2+o(1)}$, then
\begin{equation}\label{eqn:vX:smallball} 
|\Bv^T  \col_{n} | \ll n^{o(1)}\delta.
\end{equation}
Thus, we have reduced the problem to bounding  the probability with respect to $X_{n-1}$ and $\col_{n}$ that $X_{n-1}$ has an eigenvector $\Bv$ such that $|\Bv^T \col_{n}| = O(n^{o(1)}\delta)$. The plan is then divided into two separate steps: 
\begin{enumerate}
\item Step 1. With extremely high probability with respect to $X_{n-1}$, all of the unit eigenvectors $\Bv$ of $X_{n-1}$ do not have structure (where we will delay the precise definition of this structure to a later discussion); 
\vskip .01in
\item Step 2. Conditioned on $X_{n-1}$ from the first step, the probability with respect to $\col_{n}$ that  \eqref{eqn:vX:smallball} holds is bounded by $O(\delta)$.
\end{enumerate}

The structure of the Laplacian matrix $L_{n}$ (and $\wb{L}_{n}$) pose several challenges when compared with that of the symmetric/Hermitian matrix $A_{n}$ (which plays the role of $X_{n}$ above). The most apparent difficulty is the dependence among the entries above the diagonal. For instance if we fix all the entries of $L_{n-1}$, the upper left principal minor of size $n-1$ of $L_{n}$, then $L_{n}$ is determined. In other words, even if one can execute Step 1 for $L_{n-1}$, we cannot rely on Step 2 because $\col_{n}(L_{n})$ is already determined after fixing $L_{n-1}$. To avoid this deadlock, we will proceed as follows (using the model $G(n,1/2)$ as an example -- the approach naturally extends to $G(n,p)$ and beyond). First, assume that the vertices of $G=G_{n}$ are ordered as $(v_1,\dots, v_n)$. We  sample the entries $a_{ij}$ of the adjacency matrix (via $G(n,1/2)$) and compute the degrees $d_1,\dots, d_{n}$ of the vertices and then form $\wb{L}_n$. Let $\wb{L}_{n}'$ be the matrix obtained from $L_{n}$ by fixing all rows and columns, except that the last two columns $\col_{n-1}, \col_{n}$ (as well as the last two rows $\row_{n-1}, \row_{n}$) are replaced by $\col_{n-1}+\col_{n}$ and $\col_{n} - \col_{n-1}$ \footnote{In fact, to preserve the spectrum, we will replace $\col_{n-1}, \col_{n}$ by  $(\col_{n-1}+\col_{n})/\sqrt{2}$ and $(\col_{n} - \col_{n-1})/\sqrt{2}$ (see Section \ref{section:gap}), but let's ignore this minor problem for now as the proofs remain the same.}. 

Next, we reshuffle the neighbors of $v_{n-1}$ and $v_{n}$ as follows: consider the set $I_{n-2}$ of indices $1\le j\le n-2$ where $v_j$ is connected to {\it exactly one} of $v_{n-1}$ or $v_{n}$. Then for each $j \in I_{n-2}$, 
we flip a fair coin to either keep or swap whether each of $(v_j,v_{n-1})$ and $(v_j,v_{n})$ is an edge (see Figure \ref{figure:swap}).

In other words, if $X=(x_1,\dots,x_{n-2})$ and $Y=(y_1,\dots, y_{n-2})$ are the (restricted to the first $n-2$ coordinates) column vectors of $A_{n}$ (or $L_{n}, \wb{L}_{n}$) associated to $v_{n-1}$ and $v_{n}$, then $I_{n-2}$ is the collection of indices $j$ where $(x_j,y_j) =(0,1)$ or $(1,0)$ (respectively  $(0,-1)$ or $(-1,0)$ in the $L_{n}$ case, and $(1/2,-1/2)$ or $(-1/2,1/2)$ in the $\wb{L}_{n}$ case). We then flip a fair coin to reassign $(x_j,y_j)$ to $(0,1)$ or $(1,0)$. To some extent, this reshuffling is similar to \cite{Mckay} and \cite{Cook,Tik, NgW, NgW2} where shufflings/switchings were used within random graphs and random matrices. However, our implementation here is rather straightforward. We can easily see that the above process does not change the law of $\wb{L}_{n}$. In terms of $\wb{L}_{n}'$, this process searches for the index set $I_{n-2} \subset [n-2]$ where the entries of $\col_{n-1}(\wb{L}'_{n})$ (or the entries of $\col_{n-1}(\wb{L}_{n})+\col_{n}(\wb{L}_{n})$) are zero, and then for each $j\in I_{n-2}$, the entries of $\col_{n}(\wb{L}'_{n})$ take values $\pm 1$ (independently from each other, and from $\wb{L}'_{n-1}$) with probability 1/2. By Chernoff's bound, with high probability, $I_{n-2}$ has size approximately $(n-2)/2$ (or $(n-2)p$ in general). With the new randomness of $\col_{n}(\wb{L}_{n}')_{|_{I_{n-2}}}$, we can now move to Step 2 to study  \eqref{eqn:vX:smallball}, which can be rewritten as 
\begin{equation}\label{eqn:vX:smallball'} 
|\Bv^T_{|_{I_{n-2}}} \col_{n}(\wb{L}_{n}')_{|_{I_{n-2}}} -a | \ll \delta
\end{equation}
for some deterministic $a$.

While we can proceed with Step 2, a new problem arises: in Step 1 it is not enough to know that $\Bv$ does not have structure, but we have to show that $\Bv_{I}$ does not have structure for any eigenvector of $\wb{L}_{n-1}'$. To the authors' best knowledge, the property of eigenvectors not having local structure has not been studied before in the literature, except in the no-gaps delocalization aspect. (Although in the finite field setting, this local aspect has been studied in \cite{NgW} for null vectors.)

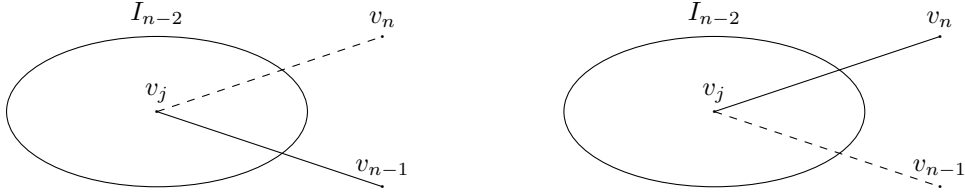
\begin{figure}\centering

\begin{tikzpicture}
\draw (2,2) ellipse (2cm and 1cm) node[above = .4in]{$I_{n-2}$};
\draw (2,2)  node{.} node[above]{$v_j$} -- (5,1) node{.} node[above]{$v_{n-1}$};
\draw [dashed] (2,2)  -- (5,3) node{.} node[above]{$v_{n}$};

\end{tikzpicture}
\hfil
\begin{tikzpicture}
\draw (2,2) ellipse (2cm and 1cm) node[above = .4in]{$I_{n-2}$};
\draw  [dashed] (2,2)  node{.} node[above]{$v_j$} -- (5,1) node{.} node[above]{$v_{n-1}$};
\draw (2,2)  -- (5,3) node{.} node[above]{$v_{n}$};

\end{tikzpicture}

\caption{Reshuffling neighbors.}
\label{figure:swap}

\end{figure}

\subsection{No-Gaps and No-Structure Delocalization} Step 1 of our modified plan has two substeps.
\begin{itemize}
\item  Substep 1(i). We first show that with high probability, $\|\Bv_{I}\|_{2}$ is not too small for all $I$ of size $\Theta(n)$ (Theorem \ref{thm:non-gap:lap}).
\vskip .1in
\item  Substep 1(ii). We then show that $\Bv_{I}$ does not have structure (Theorem \ref{thm:LCD:lap}).
\end{itemize}

The first type of result is called no-gaps delocalization, a notion pioneered by Rudelson and Vershynin in \cite{RV-nongap}. Note that in random matrix theory, delocalization usually means that the eigenvectors are flat, not localized. For instance, for the current centered Laplacian model, it has been shown in \cite{HL-Laplacian} that all unit eigenvectors $\Bv$ corresponding to the eigenvalues in the bulk are completely delocalized in the sense that $\|\Bv\|_{\infty} \le n^{{-1/2+o(1)}}$ with very high probability. So $\|\Bv_{I}\|_{2}$ is small over any set $I$ of size $o(n)$.  The no-gaps delocalization, on the other hand, addresses the property that $\|\Bv_{I}\|_{2}$ cannot be small over any set $I$ of order  $n$. We will cite below a very strong result of  Rudelson-Vershynin in \cite{RV-nongap}, that works for any general random non-symmetric, symmetric and skew-symmetric matrices $X_{n}$ (where $x_{ij}$ and $x_{ji}$ can depend on each other, but otherwise are independent, subgaussian of mean zero and variance one).

\begin{theorem}\label{thm:RV_no_gap} There exist constants $C, C', c$ such that the following holds. Let
    \[
        \la_{n} \geq \frac{1}{n} \text{ and } s \geq C \la_{n}^{-7/6} n^{-1/6} + \exp (-c/ \sqrt{\la_{n}}).
    \]
    Then with probability at least $1 - (C' s)^{\la_{n} n}$, every unit eigenvector $\Bv$ of $X_{n}$
\[
\|\Bv_I\|_{2} \geq (\la_{n} s)^6 
\]
for all $I \subset [n]$ such that $|I| \geq \la_{n} n$. 
\end{theorem}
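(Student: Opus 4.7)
My plan reduces the no-gaps delocalization claim to a uniform small-ball (``restricted invertibility'') estimate for a rectangular off-diagonal block of $X_n$, which I then establish via the compressible/incompressible net technology of Rudelson--Vershynin.

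\textbf{Step 1: Block reduction.} Suppose $\Bv$ is a unit eigenvector of $X_n$ with eigenvalue $\mu$, and suppose for contradiction that $\|\Bv_I\|_2 \leq \tau$ for some $I \subset [n]$ with $|I| = m := \lceil \lambda_n n \rceil$. Writing $X_n \Bv = \mu \Bv$ in block form relative to $[n] = I \cup I^c$ and extracting the rows indexed by $I$ gives
\begin{equation*}
X_{I \times I^c} \, \Bv_{I^c} \;=\; (\mu - X_{I \times I}) \, \Bv_I .
\end{equation*}
Using the standard operator norm bound $\|X_n\|_{\mathrm{op}} \leq C \sqrt{n}$, which holds with exponentially high probability for subgaussian matrices of our form, the right-hand side has norm at most $C' \tau \sqrt{n}$. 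Since $\|\Bv_{I^c}\|_2 \geq \sqrt{1-\tau^2}$, the unit vector $\Bw := \Bv_{I^c}/\|\Bv_{I^c}\|_2 \in \R^{n-m}$ is an approximate null-vector of the rectangular matrix $B := X_{I \times I^c}$, namely $\|B\Bw\|_2 \lesssim \tau \sqrt{n}$. Crucially, in the symmetric/skew-symmetric setting the entries of $B$ are jointly independent and independent of the diagonal blocks $X_{I\times I}$ and $X_{I^c \times I^c}$ (the symmetry constraint only couples $B$ with its transpose, which plays no role here), and $B$ itself carries no dependence on $\mu$.

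\textbf{Step 2: Uniform small-ball for $B$.} I next show that for a single $I$, with probability at least $1 - (C_1 s)^m$,
\begin{equation*}
\inf_{\Bw \in S^{n-m-1}} \|B\Bw\|_2 \;\geq\; s^{6} \sqrt{m} ,
\end{equation*}
which, combined with Step 1, produces the desired lower bound on $\|\Bv_I\|_2$. The argument follows the Rudelson--Vershynin invertibility scheme: for a single $\Bw$ with mildly spread coordinates, tensorization of the one-dimensional Esseen concentration inequality yields $\Pr(\|B\Bw\|_2 \leq s \sqrt{m}) \leq (C_2 s)^m$. To promote this fixed-vector bound to a uniform bound over the sphere in $\R^{n-m}$, I split the sphere into a compressible part (vectors close to sparse), handled by a crude $\varepsilon$-net whose cardinality is absorbed by the small-ball exponent, and an incompressible part, handled by the inverse Littlewood--Offord machinery: the least common denominator of an incompressible vector is large, which yields a far stronger small-ball estimate compensating the larger metric entropy of this region. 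The exponent $6$ and the precise threshold $s \geq C \lambda_n^{-7/6} n^{-1/6} + \exp(-c/\sqrt{\lambda_n})$ emerge from balancing the net entropy against the small-ball gain across the two regimes.

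\textbf{Step 3: Union bounds and conclusion.} Because the event in Step 2 does not reference $\mu$, there is no need for a net on the spectrum: any eigenvector produces some $\Bw$ on $S^{n-m-1}$, which is already uniformly controlled. Union-bounding over $I$ of size $m$ costs $\binom{n}{m} \leq (en/m)^m = (e/\lambda_n)^{\lambda_n n}$, which can be absorbed into $(C' s)^{\lambda_n n}$ after adjusting $C'$ and using the hypothesized lower bound on $s$. Setting $\tau = (\lambda_n s)^6$ and combining yields the stated probability bound.

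\textbf{Main obstacle.} The heart of the argument is the uniform small-ball estimate in Step 2, and in particular the incompressible stratum. The tension is fundamental: the sphere in $\R^{n-m}$ has metric entropy of order $n-m$, while the naive tensorized small-ball gain is only of order $m$; when $m \ll n$ the two do not balance, and only the LCD-based strengthening of the small-ball inequality — in which the effective concentration function decays faster than any polynomial for genuinely unstructured vectors — rescues the exponent. A secondary subtlety is that $\Bw$ is not deterministic but depends on the eigenvector of the whole matrix $X_n$, so the Step 2 estimate must be genuinely uniform over $S^{n-m-1}$ rather than over a structured subfamily; this is precisely why the full compressible/incompressible dichotomy is required rather than a single epsilon-net.
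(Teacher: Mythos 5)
The result you attempt to prove (Theorem~\ref{thm:RV_no_gap}) is not proved in this paper at all: it is quoted verbatim from Rudelson--Vershynin's no-gaps delocalization paper \cite{RV-nongap}, so there is no internal proof to compare against. Evaluating your sketch on its own merits, Step~2 is fatally broken. You ask for a lower bound on $\inf_{\Bw\in S^{n-m-1}}\|B\Bw\|_2$ where $B = X_{I\times I^c}$ has dimensions $m\times(n-m)$ with $m = |I|\approx\lambda_n n$. For $\lambda_n < 1/2$ --- which covers the entire interesting range $\lambda_n \in [1/n, 1/2)$ --- this is a wide matrix whose kernel has dimension at least $n-2m > 0$, so the infimum in question is deterministically zero. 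No small-ball estimate, and no LCD-based refinement, can prove a false statement; the metric-entropy-versus-small-ball tension you flag as the ``main obstacle'' is a symptom of this underlying rank deficiency, not an independent difficulty that can be negotiated by splitting the sphere into compressible and incompressible strata.

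The defect traces back to your choice to retain only the $I$-indexed rows, made precisely so that $B$ would carry no dependence on $\mu$. But that discards too much: one must keep all $n$ rows, so that $\Bw = \Bv_{I^c}/\|\Bv_{I^c}\|_2$ becomes an approximate null vector of the \emph{tall} $n\times(n-m)$ block $(X_n-\mu I)_{[n]\times I^c}$, whose smallest singular value is at least a nontrivial quantity to bound below. The price of that repair is reinstated $\mu$-dependence (a net over the spectrum is then unavoidable) and, for the symmetric model, dependent entries within the tall block --- and it is precisely at these two points that the genuine technical work of \cite{RV-nongap} lies, quite differently from the outline you propose.
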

Although this result is extremely strong, the method of \cite{RV-nongap} does not seem to extend to matrices where the diagonal entries depend on other entries (the way $\wb{L}_{n}$ does), nor to matrices of norm of order $\omega(\sqrt{n})$ --- in our setting, $\wb{L}_{n}$ has norm of order $\Theta( \sqrt{n \log n})$ with high probability.

In this paper, we show the following analogue

\begin{theorem}[Affine no-gaps delocalization of eigenvectors of $\wb{L}'_{n-1}$]\label{thm:non-gap:lap}
Let $n_0 = \lambda_n n$, where 
$$\frac{1} {\log \log n}\le \lambda_n < 1.$$ 
For some sufficiently large constant \( A_0 \), the probability with respect to $\wb{L}'_{n-1}$ (or $\wb{L}_{n-1}$) that it has a unit eigenvector $\Bv$ and an index set $I \subset [n]$ of size $n_0$ such that $\Bv \perp \1$ and
\begin{equation}\label{eqn:segment:approx}
\inf_{a\in \R} \|\Bv_I -a1_I\|_2 \le \frac{1}{ (\log n)^{\frac{A_0}{\lambda_n}}}
\end{equation}
is bounded by $n^{-\omega(1)}$ as $n\to \infty$.  
\end{theorem}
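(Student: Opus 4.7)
The plan is to adapt the Rudelson--Vershynin framework for no-gaps delocalization (Theorem \ref{thm:RV_no_gap}) to the centered Laplacian, in which three new features must be accommodated: the diagonal is a deterministic function of the off-diagonal entries (via the row-sum identity for $\wb L$), the operator norm is of order $\sqrt{n\log n}$ rather than $\sqrt n$, and the row-sum structure forces the small-ball analysis to be performed modulo the constant direction, which is precisely the origin of the affine correction in \eqref{eqn:segment:approx}. First I would discretize: place $\la$ on an $\eps$-net of $[-C\sqrt{n\log n},\,C\sqrt{n\log n}]$, place $a$ on an $\eps$-net of $[-n_0^{-1/2},\,n_0^{-1/2}]$ (noting $|a|\sqrt{n_0}\le \|\Bv_I\|\le 1$), and union-bound over the $\binom{n-1}{n_0}\le 2^{n-1}$ choices of $I$; each of these combinatorial factors is at most exponential in $n$ and will be dominated by the superexponential tail obtained below.

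For fixed $\la$ and $I$, set $J:=[n-1]\setminus I$. If a unit eigenvector $\Bv\perp\1$ satisfies $\Bv_I=a\1_I+\Bu$ with $\|\Bu\|_2\le\eps$, then extracting the $J$-rows of $\wb L'_{n-1}\Bv=\la\Bv$ yields
\[
\bigl((\wb L'_{n-1})_{J,J}-\la I_J\bigr)\Bv_J + a\,(\wb L'_{n-1})_{J,I}\1_I = -(\wb L'_{n-1})_{J,I}\,\Bu,
\]
whose right-hand side has norm $O(\eps\sqrt{n\log n})$. Writing $M_\la:=(\wb L'_{n-1})_{J,J}-\la I_J$ and $\Bz:=(\wb L'_{n-1})_{J,I}\1_I$, and using $\Bv\perp\1,\|\Bv\|=1$ to guarantee $\|\Bv_J\|\ge 1/2$, the bad event forces
\[
\sigma_{\min}\bigl([\,M_\la\mid\Bz\,]\bigr)\;\le\;C\eps\sqrt{n\log n}
\]
on the augmented $|J|\times(|J|+1)$ matrix, whose extra column $\Bz$ encodes the affine direction. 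A preliminary removal of the ``compressible'' possibility (where $\Bv$ is $\eps$-close to the two-dimensional span of $\1_I$ and $\1_J$) uses $\Bv\perp\1$ together with simple net bounds to push onto an incompressible complement where LCD methods become effective.

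I would then execute the incompressible step via a Littlewood--Offord/LCD bound row by row. Expanding $\row_j(M_\la)\cdot\Bv_J + a\Bz_j$ using the row-sum structure of $\wb L$ shows that, up to a $\la$-dependent constant, it equals a linear combination of the independent Bernoulli centrations $y_{jk}=a_{jk}-p$ ($k\ne j$) whose coefficients are $v_j-v_k$ for $k\in J\setminus\{j\}$, $v_j-a$ for $k\in I$, and $v_j$ for $k=n$. Consequently the Levy concentration function of this linear form is controlled by the LCD of this augmented coefficient vector, which is \emph{large} precisely when $\Bv$ fails to be affinely structured on $J$ and on $I$---which is exactly the regime complementary to \eqref{eqn:segment:approx}. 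This explains structurally why an \emph{affine} no-gaps statement is the natural notion for the Laplacian, and it fits the LCD machinery developed later in the paper (cf.\ Theorem \ref{thm:LCD:lap}). Multiplying the row-wise small-ball bounds over $j\in J$ produces an invertibility tail of order $\bigl((\log n)^{-A_0/\lambda_n}\bigr)^{c\lambda_n n}=(\log n)^{-cA_0 n}$, which is $n^{-\omega(1)}$ and absorbs the nets in $\la$ and $a$, the $\binom{n-1}{n_0}$ choices of $I$, and the compressible subcases.

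The main obstacle I foresee is running this affine LCD estimate uniformly across the full range $1/\log\log n\le\lambda_n<1$ while correctly bookkeeping that $\Bz$ and the diagonal of $M_\la$ both involve the same $I$-column entries of $\wb L$; the standard remedy is to condition on all randomness outside the $J\times J$ block before running the small-ball argument, after which the rows of $M_\la$ become independent linear forms in a fresh batch of Bernoulli variables. The statement for $\wb L_{n-1}$ (as opposed to $\wb L'_{n-1}$) follows from the same argument applied prior to the column-swap modification, since the norm bound, centering, and independence of the relevant off-diagonal entries are all preserved under that modification.
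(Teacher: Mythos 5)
Your proposal sketches a plausible-looking RV-style program, but it departs from the paper's proof and contains several gaps that, in my view, prevent it from closing for this particular theorem.

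\textbf{Wrong small-ball tool (circularity).} Your plan bounds the row-wise Levy concentration by ``the LCD of the augmented coefficient vector,'' appealing to the LCD small-ball bound (Theorem~\ref{thm:small_ball_prob_LCD}). But at this stage of the argument nothing forces the LCD of $\Bv_J$ (or its affine shifts) to be large: the fact that all eigenvector segments have large LCD is precisely Theorem~\ref{thm:LCD:lap}, which is proved afterwards and which itself relies on Theorem~\ref{thm:non-gap:lap}. The paper avoids this circularity by using the affine Erd\H{o}s--Littlewood--Offord estimate (Theorem~\ref{theorem:LO}), which controls $\rho_{L,r}$ merely by counting coordinates at distance $\ge r$ from every constant $w$; this is assumption-free and is exactly what the sparse/mixed interval decomposition is engineered to exploit (see Remark~\ref{remark:L:sparse:rho} and the application at \eqref{eq:sbp_tensor_case2}).

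\textbf{``Remove the compressible case'' is far too coarse.} You dismiss partial structuring by ruling out $\Bv$ in a small neighborhood of $\mathrm{span}(\1_I,\1_J)$, but a vector can be nearly affine on many (but not all) intervals of $J$ without being compressible in that two-dimensional sense. This partial structuring is exactly what degrades row-wise small-ball bounds, and it is why the paper introduces the iterative level-set decomposition into $I_s$ (unions of segments close to a constant up to growing tolerance $\delta_n t_0^j$) and $I_m$ (segments with guaranteed spread), with the two cases $I_m=\emptyset$ and $I_m\ne\emptyset$ handled by distinct arguments.

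\textbf{Incorrect independence claim.} You write that after conditioning on randomness outside the $J\times J$ block, ``the rows of $M_\lambda$ become independent.'' They do not: the off-diagonal entries of the $J\times J$ block are shared between rows via the symmetry $y_{jk}=y_{kj}$ for $j,k\in J$, and the diagonal of each such row also involves those same variables through the row-sum. The paper restores genuine independence by selecting a mixed segment $J_*$ and running the tensorized small-ball estimate only over the rectangular i.i.d.\ block $L_{([n]\setminus F)\setminus J_* \,\times\, J_*}$, whose rows are indeed independent upon conditioning (see Figure~\ref{figure:rec} and the sentence surrounding~\eqref{E:toget}).

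\textbf{Missing the overcrowding estimate.} Finally, your proposal never accounts for the portion of $\Bv$ on which no structure is known. The paper's Case~2 crucially invokes Theorem~\ref{thm:lap_overcrowding} (via $\CE_{drop}$) to extract a near-isometric rectangular submatrix $M_{12}$ and thereby solve approximately for $\Bv_m''$ in terms of $\Bv_s''$ (Lemma~\ref{claim:cond:fix}), before the small-ball tensorization is even possible. Without this step there is no handle on $\Bv_{J}$ beyond the bad event itself. I'd encourage you to revisit these four points; the first and fourth are the most serious, and they show why the paper's sparse/mixed decomposition plus overcrowding route is not simply a stylistic alternative to the RV compressible/incompressible dichotomy.
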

As a corollary, when $\la_n$ is of constant order, with high probability, for any non-trivial unit eigenvector $\Bv$, not only is the total mass over $I$ not small,  $\|\Bv_I \|_2 \ge \frac{1}{ (\log n)^{O(1)}}$, but it is also not small after shifting (affine delocalization):
$$\inf_{a\in \R} \|\Bv_I -a 1_I\|_2 \ge \frac{1}{ (\log n)^{O(1)}}.$$ 

This translates into a statement about the ``incompressibility'' of affine shifts of segments of eigenvectors. It is possible that the result continues to hold for $\inf_{a\in \R} \|\Bv_I -a 1_I\|_2 \ge \Theta_{\la_n}(1)$ and for $\la_n \ll \frac{1} {\log \log n}$ (with possibly much finer approximation than \eqref{eqn:segment:approx}) as in \cite{RV-nongap}, but it seems that $ \frac{1}{ (\log n)^{O(1)}}$ (for the mass contribution) and $\frac{1} {\log \log n}$ (for the lower bound of $\la_n$) are the limits of our proof. Theorem \ref{thm:non-gap:lap} applies to subsets $I$ of size larger than $n/\log \log n$ which is enough for our purposes, and also, the affine aspect is new and will play a key role for later parts of the paper. While the result is much weaker compared to Theorem \ref{thm:RV_no_gap}, our matrix model, the random Laplacian, is more involved and we develop new ways to handle these difficulties. Furthermore, the behavior of eigenvectors of the Laplacian is fundamentally different than that of the non-symmetric or Wigner matrices. Even for a Laplacian matrix generated from Gaussian random variables, the eigenvectors are not uniformly distributed over the sphere.  In fact, simulations show that eigenvectors corresponding to large eigenvalues appear to be mildly localized.

Next, we will discuss Substep 1(ii), which is another innovative aspect of our paper. To do this, we first introduce an important concept following Rudelson and Vershynin.

\begin{definition}\label{def:LCD}
   For a unit vector \( \Bx\in  \R^{n} \), we define the \textbf{least common denominator} of \( \Bx \) with parameters \( \kappa > 0 \) and \( \gamma \in (0,1) \) as
   \begin{equation*}
       \LCD_{\kappa, \gamma} (\Bx) = \inf \left\{ \theta > 0 : \dist (\theta \Bx, \Z^{n}) < \min\{ \gamma \|\theta \Bx\|_{2}, \kappa\}\right\}.
   \end{equation*}
For notational convenience, whenever we refer to $\LCD_{\kappa, \gamma} ({\Bx})$ for any non-zero vector $\Bx \in \R^{n}$, we always mean $\LCD_{\kappa, \gamma} \left(\frac{\Bx}{\|\Bx\|_{2}}\right)$.
\end{definition}

In this paper, if not specified otherwise, we will choose $\kappa = \kappa_{n}=n^{c}$ for some sufficiently small constant $c$, while $\gamma=\gamma_{n}$ is sufficiently small, which can tend to zero slowly. 

One of the highlights of the paper is to show that, not only are the eigenvectors of the Laplacian no-gaps delocalized, but also ``no-structure delocalized''.

\begin{theorem}[No-structure delocalization of Laplacian eigenvectors]\label{thm:LCD:lap} Let $A>0$ be given. Let $n_0 = \lambda_n n$, where
$$\frac{1} {\sqrt{\log \log n}}\le \lambda_n < 1.$$ 
The probability with respect to $\wb{L}'_{n-1}$ (or $\wb{L}_{n-1}$) that it has a unit eigenvector $\Bv$ and an index set $I \subset [n]$ of size $n_0$ such that $\Bv \perp \1$ and
$$\LCD_{\kappa,\gamma}(\Bv_{I}) \ge n^{A}$$
is bounded by $n^{-\omega(1)}$ as $n\to \infty$.
\end{theorem}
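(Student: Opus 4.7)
The plan is to prove the stated bound by a covering argument on unit vectors in $\R^{n_0}$, combined with small-ball estimates extracted from the eigenvector equation of $\wb{L}'_{n-1}$ and the column reshuffling described in Section \ref{sect:others}. The key observation is that the set of unit vectors governed by the given LCD threshold can be covered by an $\ep$-net of manageable size, and for each net element the associated small-ball probability in the eigenvector equation can be controlled uniformly.

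First I would stratify unit vectors by dyadic LCD levels in the range of interest and, at each scale $D$, take an $\ep$-net $\mathcal{N}_D$ of unit vectors in $\R^{n_0}$ whose LCD lies in the current band, with $\ep$ tuned to the scale; standard Rudelson-Vershynin type covering estimates give $|\mathcal{N}_D| \le (CD/\sqrt{n_0})^{n_0}$ at scale $\ep \sim 1/D$. For each $\Bu \in \mathcal{N}_D$, I would bound the probability that there exists an eigenvector $\Bv \perp \1$ of $\wb{L}'_{n-1}$ with $\Bv_I$ close to a scalar multiple of $\Bu$. To this end I pick an index $j \notin I$, condition on $\wb{L}'_{n-1}$ away from the fresh-randomness set $I_{n-2}$ supplied by the reshuffle, and rewrite the eigenvector equation along coordinate $j$ as a small-ball event $|\Bu_{I_{n-2} \cap I}^T \col_j - a| \le \delta$ for some deterministic shift $a$. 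An inverse Littlewood-Offord inequality calibrated to the current LCD band yields a per-equation probability of order $\delta + 1/(\gamma D)$.

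Second, I would iterate this extraction across $\Omega(n_0)$ rows that are jointly independent after appropriate conditioning, multiplying the small-ball bounds to give a cumulative decay that beats the covering entropy of $\mathcal{N}_D$. Finally, I would take a union bound over the $O(\log n)$ dyadic scales, the $\binom{n}{n_0}$ choices of $I$, and a polynomial net of candidate eigenvalues $\lambda$ (discretized within $[-\|\wb{L}_{n-1}\|, \|\wb{L}_{n-1}\|]$ using the high-probability norm bound $\|\wb{L}_{n-1}\| = O(\sqrt{n \log n})$), together with a sufficiently large choice of the LCD parameter $A$ to close the estimate as $n^{-\omega(1)}$.

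The main obstacle I expect comes from the dependence structure of $\wb{L}_n$: the diagonal entries are tied to the row sums of the adjacency matrix, so inverse Littlewood-Offord cannot be applied to any row of $\wb{L}_n$ in isolation. The reshuffle of Section \ref{sect:others} supplies a workaround, but the fresh randomness lives on $I_{n-2}$ of size $\sim np$, and any drift of $\Bu$ toward a constant on this set would collapse the small-ball gain. Here the affine refinement of Theorem \ref{thm:non-gap:lap} is essential: by first projecting out the $\1_I$ direction before invoking the LCD, one guarantees that the tested $\Bu$ retains nontrivial fluctuation on $I_{n-2}$, preserving the inverse-LCD small-ball bound and closing the entropy-vs-anticoncentration balance uniformly across all scales, all index sets, and all eigenvalues simultaneously.
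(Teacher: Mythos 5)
Your sketch captures the high‑level shape of a net‑plus‑small‑ball argument, but it misses the two ideas that actually make the proof go through for the Laplacian, and it mislocates the source of fresh randomness.

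First, the randomness used in the proof of Theorem~\ref{thm:LCD:lap} is \emph{not} the reshuffling randomness of Section~\ref{sect:others}. The reshuffle resamples entries of $\col_n(\wb L'_n)$, which lies outside the $(n-1)\times(n-1)$ minor $\wb L'_{n-1}$ whose eigenvectors we are controlling; conditioning on $\wb L'_{n-1}$ ``away from $I_{n-2}$'' is vacuous, because the reshuffle does not touch $\wb L'_{n-1}$ at all. That fresh randomness is only exploited later, in Section~\ref{section:gap}, when bounding $|\Bv\cdot\col_n(\wb L'_n)|$. To prove Theorem~\ref{thm:LCD:lap} itself one must work with the randomness \emph{inside} $\wb L_{n-1}$, and because the matrix is symmetric with diagonal entries coupled to row sums, the paper has to carve out an honest rectangular block $(M_{11})_{(I\setminus I_{j_0})\times I_{j_0}}$ (Figure~\ref{figure:rec}) and work with the affine small‑ball quantity $\rho_L$, minimizing over shifts $a\in\CA$ via the quantity $D(\Bv_I)$ of~\eqref{eqn:D:v_I}. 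Your plan picks a ``row $j\notin I$'' but never addresses the diagonal feedback $(\wb L_{n-1})_{jj}=\sum_\ell x_{j\ell}-\E(\cdot)$, which reinserts $v_j\sum_{i\in I} x_{ji}$ into the small‑ball event and forces the affine treatment.

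Second, and more seriously, you cannot simply ``condition on the rest of the matrix'' and read off a small‑ball event for $\Bv_I$, because the eigenvector equation also involves $\Bv_{I^c}$, which is itself determined by the very randomness you want to keep fresh. This circularity is the central difficulty. The paper breaks it with the overcrowding estimate Theorem~\ref{thm:lap_overcrowding}: it extracts a near‑isometric rectangular submatrix $M_{22}|_T$ of $L_{I^c\times I^c}$, inverts it to express $\Bv_{I^c}|_T$ approximately as a deterministic function of $\Bv_I$, $\Bv_{I^c}|_{T^c}$, $M_{21}$, and $M_{22}$, then substitutes back to obtain $\|(M_{11}-F)\Bv_I\|_2=O(\sqrt n\,\rho)$ with a fixed $F$ and a controlled $\rho$, see~\eqref{eqn:v_{I^{c}}}. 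Only after this reduction can one tensorize the row‑by‑row small‑ball estimates. Your proposal contains no mechanism for controlling $\Bv_{I^c}$, so the ``cumulative decay that beats the covering entropy'' is not actually available: the entropy of the unknown $\Bv_{I^c}|_{T^c}$ and the residual error $\eps/d_n$ must be budgeted against the anticoncentration gain, and this budgeting is exactly why the paper partitions $I$ into $k=\lfloor\sqrt{\log\log n}\rfloor$ subintervals, takes the net over all of them, but extracts small‑ball gain only from the rows outside the best subinterval $I_{j_0}$. Without these two ingredients --- the overcrowding‑based solve for $\Bv_{I^c}$ and the subinterval/shift structure --- the argument as written does not close.
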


Here the bound $\frac{1} {\sqrt{\log \log n}}\le \lambda_n$ can likely be improved to $\frac{1} {\log \log n}\le \lambda_n$ as in Theorem \ref{thm:non-gap:lap}, but we will not dwell on this point here. 

\subsection{Approximate Eigenvectors and More General Results}\label{sub:approx} Given $A$ from Theorems \ref{thm:non-gap:lap} and \ref{thm:LCD:lap}, we choose $C$ to be a sufficiently large constant. By approximating the eigenvalues $\la$ with scale $n^{-C}$ and using Lemma \ref{lemma:norm:lap}, it suffices to consider the event that there exists $\Bv \in S^{n-2}$ and $\la_{0} \in \CN_{\la} =\{\frac{k}{n^{C}} , k\in \Z, |k| \le n^{C+1}\}$ such that $|(\wb{L}'_{n-1}- \la_0) \Bv|\le n^{-C}$, and $v_1+\dots+ v_n=0$ \footnote{As the size of $\CN_{\la}$ is of order $|\CN_{\la}| =n^{O(1)}$, while our probability is of order $n^{-\omega(1)}$, it suffices to fix one $\la_{0}$ from $\CN_{\la}$.}. Let 
\begin{equation}
D :=\lceil n^{C+1/2} \rceil.
\end{equation} 
By further $\sqrt{n}/D$-approximating $\Bv$ in $l_2$-norm, we can assume that the entries of $\Bv$ are in $\frac{1}{D}\Z$, and 
\begin{equation}\label{eqn:Lv}
\|(\wb{L}'_{n-1}- \la_0) \Bv\|_2 =O(  \sqrt{n} \log n \times n^{-C}); \mbox{ and } |\sum_i v_i| \le n/D =n^{1/2-C}; \mbox{ and } \|\Bv\|_2 = 1+ O(n^{-C}).
\end{equation}
We call the above $\Bv$ an {\it approximate non-trivial eigenvector}.

\begin{theorem}\label{thm:approx:eigenvector} Theorems \ref{thm:non-gap:lap} and \ref{thm:LCD:lap} hold for the approximate eigenvectors from \eqref{eqn:Lv} of $\wb{L}'_{n-1}$ (or of $\wb{L}_{n-1}$).
\end{theorem}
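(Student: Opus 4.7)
The plan is to prove the approximate versions directly; in the net-argument framework we intend to use for Theorems \ref{thm:non-gap:lap} and \ref{thm:LCD:lap}, the bad events for approximate eigenvectors are precisely what one bounds first, and the corresponding exact statements follow automatically via the discretization in Section \ref{sub:approx}. The task is therefore to bound, uniformly over approximate eigenvectors $\Bv$ satisfying \eqref{eqn:Lv}, the probability that the negation of either conclusion holds for some index subset $I$ of size $n_0$. First I would fix $\la_0 \in \CN_{\la}$ (paying a factor $n^{O(1)}$ by union bound) and set up a discrete net $\CN_D$ of candidate vectors with entries in $\frac{1}{D}\Z$, approximate unit norm, and $|\sum_i v_i| \le n^{1/2-C}$; this net has size at most $\exp(O(n \log n))$.

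For each fixed $\Bv \in \CN_D$ the key computation is a row-wise small ball bound exploiting
\[
\langle \row_i(\wb{L}'_{n-1}) - \la_0 \Be_i, \Bv \rangle = \sum_{j \ne i} x_{ij}(v_i - v_j) - \la_0 v_i,
\]
which eliminates the dependent diagonal entry and recasts the $i$-th row of the approximate eigenvector equation as an Esseen-type sum of independent signed variables with weights $w_{ij} = v_i - v_j$. Standard Rudelson--Vershynin type small ball inequalities then bound the probability that this sum falls in an interval of length $O(n^{-C+o(1)})$ by $n^{o(1)}/\LCD_{\kappa,\gamma}(\Bw_i)$ plus a negligible error term. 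Since the weight vectors $\Bw_i$ inherit their LCD from $\Bv_I$ up to controllable shifts (using $\Bv \perp \1$), tensorizing the row bound over a positive fraction of rows yields a small ball probability of order $\LCD_{\kappa,\gamma}(\Bv_I)^{-\Omega(\la_n n)}$.

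I would then stratify $\CN_D$ by the dyadic scale of $\LCD_{\kappa,\gamma}(\Bv_I)$ (for Theorem \ref{thm:LCD:lap}) or by the scale of the affine deviation $\eps = \inf_{a} \|\Bv_I - a \1_I\|_2$ (for Theorem \ref{thm:non-gap:lap}). At each level one controls the entropy of the stratum: the $\LCD \le D_0$ stratum is covered by $\exp(O(n \log D_0))$ elements of an appropriate sublattice, while the affine deviation $\le \eps$ stratum is parametrized by a single shift $a$ plus an $\eps$-perturbation and so is covered by $\exp(O(n_0 \log(1/\eps)))$ vectors. Combining the entropy with the tensorized small ball probability delivers the required $n^{-\omega(1)}$ bound as soon as $\LCD_{\kappa,\gamma}(\Bv_I) \ge n^A$ or $\eps \ge (\log n)^{-A_0/\la_n}$, after summing over the dyadic levels, the choices of $I$, and $\la_0 \in \CN_{\la}$.

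The main obstacle I expect is twofold. First, because the row-sum formula only sees the differences $v_i - v_j$, any global additive shift of $\Bv_I$ by a constant $a \1_I$ is invisible to the small ball inequality; the orthogonality constraint $\Bv \perp \1$ together with the large size of $I$ must be exploited geometrically to convert an affine-structured $\Bv_I$ into a genuinely non-flat $\Bw_i$ (this is precisely where the affine formulation of Theorem \ref{thm:non-gap:lap} intervenes). Second, pushing $\la_n$ down to $(\log\log n)^{-1/2}$ (respectively $(\log\log n)^{-1}$) will stress the small ball estimate in the regime where $\la_n n$ is much smaller than $n$; here the tensorization must be performed over a carefully chosen subset of useful rows, and extracting enough independent randomness from $\wb{L}'_{n-1}$ without being defeated by the diagonal dependence is the technically delicate step.
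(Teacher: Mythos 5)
The high-level framework you describe — prove the statements for approximate eigenvectors directly by a net argument, exploit the identity $\langle \row_i(\wb{L}'_{n-1})-\la_0\Be_i,\Bv\rangle=\sum_{j\ne i}x_{ij}(v_i-v_j)-\la_0 v_i$ to decouple the diagonal, and use affine Littlewood--Offord plus tensorization — is exactly the paper's framework, and Theorem~\ref{thm:approx:eigenvector} is indeed just the record that Sections~\ref{sec:nogaps}--6 work at this level of discretization. You also correctly flag the affine shift invisibility and the need to harvest independent randomness from the symmetric matrix. But your entropy accounting has a fatal gap. You write that ``the $\LCD\le D_0$ stratum is covered by $\exp(O(n\log D_0))$ elements'' and similarly for the affine-deviation stratum, but the constraint $\LCD_{\kappa,\gamma}(\Bv_I)\le D_0$ (or $\inf_a\|\Bv_I-a\1_I\|\le\eps$) only compresses the net over the $n_0$ coordinates in $I$; the complement $\Bv_{I^c}$ is completely unconstrained and still requires the full $\frac{1}{D}\Z$-lattice net of cardinality $\exp(\Theta((n-n_0)\log D))=\exp(\Theta(n\log n))$ with $D=\lceil n^{C+1/2}\rceil$. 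Your tensorized small ball bound is at best $D_0^{-\Omega(\la_n n)}\le n^{-O(A\la_n n)}$, which cannot beat this entropy when $C\gg A$.

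This is precisely the obstacle identified in Subsection~\ref{sub:overcrowding:discussion} of the paper, and the missing ingredient is Theorem~\ref{thm:lap_overcrowding} (the overcrowding estimate). The paper does not take a union bound over all of $\Bv_{I^c}$. Instead, outside an event of probability $e^{-\omega(n\log n)}$ there is a near-isometric square (or near-square) submatrix $M_{22}|_T$ of the $I^c\times I^c$ block; applying its left inverse $M_{22}'$ to $\|M_{21}\Bv_I+M_{22}\Bv_{I^c}\|\le\eps$ shows that $\Bv_{I^c}|_T$ is determined, up to an error $O(\eps/d_n\sqrt n)$, by $\Bv_I$ and the tiny remainder $\Bv_{I^c}|_{T^c}$, so no net is needed over $T$. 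Only $\Bv_{I^c}|_{T^c}$ costs entropy, and with $|T^c|=d_n(n-n_0)=O(n/\log^2 n)$ this is a subexponential factor $n^{O(n/\log^2 n)}$. The small ball bound is then taken with respect to the residual randomness in $M_{11}$ (using the rectangular off-diagonal block in Figure~\ref{figure:rec} to avoid the symmetric/diagonal dependence), and the cancellation of $D_0^{n_0}$ against $D_0^{-(n_0-m-\cdots)}$ closes the argument. A parallel $I_s$ versus $I_m$ decomposition plays the analogous role for the affine no-gaps version in Section~\ref{sec:nogaps}: in Case~1 ($I_m$ empty) the entire vector is structured and the net is small, and in Case~2 the overcrowding estimate again suppresses the entropy of the mixed part. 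Without introducing this mechanism for eliminating the $\Bv_{I^c}$ entropy, your proposed stratification does not close.
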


\subsection{Overcrowding estimate}\label{sub:overcrowding:discussion} One of the main obstacles in ruling out the event that $\|\Bv_{I}\|_{2}=o(1)$ or the event that $\LCD(\Bv_{I})$ is small is that we have no information on the remaining vector $\Bv_{I^{c}}$. For the sake of discussion, let us assume that $X_{n} \Bv =0$. We can rewrite this as $M_{11}\Bv_{I} + M_{21} \Bv_{I^{c}} =0$ and $M_{12}\Bv_{I} + M_{22} \Bv_{I^{c}} =0$, where 
$$M_{11}= ({X_{n}})_{I \times I}, M_{12}=  ({X_{n}})_{I \times I^{c}}, M_{21}=  ({X_{n}})_{I^{c} \times I}, M_{22} =  ({X_{n}})_{I^{c} \times I^{c}}, \mbox{ and } X_{n} =
\begin{bmatrix}
\multicolumn{3}{c|}{M_{11}} & & M_{12} \\
\hline
\multicolumn{3}{c|}{} & & \\
\multicolumn{3}{c|}{M_{21}} & & M_{22} \\
\end{bmatrix}.$$
Given $\Bv_{I}$ from some finite set, for which we can control the size, conditioning on $M_{21}, M_{12}$ and $M_{22}$ we can solve for $\Bv_{I^{c}}$ from $M_{21}\Bv_{I} + M_{22} \Bv_{I^{c}} =0$, that $\Bv_{I^{c}} = - M_{22}^{{-1}}M_{21}\Bv_{I}$. Substituting this into the first equation, we hence obtain
$$(M_{11}- M_{12} M_{22}^{{-1}}M_{21})\Bv_{I} =0.$$
We can then bound this probability using the randomness on the off-diagonal entries of $M_{11}$. 

One of the key problems of this approach, taking into account that we are working with asymptotic approximations rather than with identities, is that we have to control the norm of $\|M_{22}^{-1}\|_{2}$. This seems to be a very hard problem, where it is only known recently from \cite{NgW2} that $M_{22}$ is invertible with high probability. To avoid this difficulty, we will not work with $M_{22}$ directly, but a near-square rectangular submatrix $M_{22}'$ of it, where we can control the non-zero least singular values quite effectively. One drawback of this modification is that we can no longer solve for $\Bv_{I^{c}}$, but for only (and approximately) a major part of it. However, this problem can be resolved by again passing to another approximate eigenvector. We now state the main result on the strong invertibility of $M_{22}'$ in the form of $L_{n}$.

\begin{theorem}[Overcrowding for spectrum of Laplacian matrices]\label{thm:lap_overcrowding}\label{lemma:square:lap} Let $L_{n}$ be the Laplacian corresponding to an Erd\H{o}s-R\'enyi graph $G(n,p)$. There exist constants $C \geq 1$,  $c > 0$, depending only on $p$ such that 
\begin{equation*}\label{eq:lap_overcrowding}
        \mathbb{P}\left( \sigma_{n-k+1}(L_{n}+ F) \leq \frac{c k}{\sqrt{n}}\right)  =O\Big(\exp(-\Theta(k^{3/2}/\log n)\Big),
\end{equation*}
        for $k \geq C \log n$ and where the implied constants depend on $C,c$.
\end{theorem}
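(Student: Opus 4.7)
The strategy follows the overcrowding blueprint for Hermitian random matrices, adapted to the Laplacian via the reshuffling device of Section \ref{sect:others} and the delocalization theorems above. First, I would recast the event geometrically: if $\sigma_{n-k+1}(L_n+F) \le ck/\sqrt{n}$, then the Courant--Fischer principle produces an orthonormal system $\Bv_1,\dots,\Bv_k \in \R^n$ with $\|(L_n+F)\Bv_i\|_2 \le ck/\sqrt{n}$ for every $i$. Using the approximate-eigenvector discretization of Section \ref{sub:approx}, I would then cover such $k$-tuples by a deterministic net $\CN_k$ on the lattice $\tfrac{1}{D}\Z^n$ with $D=n^{\Theta(1)}$, and handle the ``eigenvalue parameters'' by a separate polynomial net. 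The naive entropy of $\CN_k$ is $\exp(O(kn\log n))$, but Theorems \ref{thm:non-gap:lap} and \ref{thm:LCD:lap} allow me to discard all tuples in which any $\Bv_i$ has a localized affine shift or a small $\LCD_{\kappa,\gamma}(\Bv_{i,I})$ on a typical coordinate subset $I$, drastically shrinking the surviving portion of the net.

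Second, for each surviving net tuple I would inject fresh randomness by a multi-column version of the reshuffling in Figure \ref{figure:swap}: on a proportional subset $I$, the entries of $k+1$ distinguished columns of $L_n$ become (conditionally on certain marginals) independent of the rest of the matrix. This is then combined with the Schur-complement reduction outlined in Section \ref{sub:overcrowding:discussion}, working with a near-square submatrix $M'_{22}$ rather than inverting $M_{22}$, so that each approximate eigenvector equation $(L_n+F)\Bv_i \approx 0$ restricts on $I$ to a linear small-ball constraint on $\Bv_{i,I}$ against the fresh randomness, with shifts $b_i$ measurable with respect to the conditioning.

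Third, I would invoke a tensorized Halász-type inequality: because $\LCD_{\kappa,\gamma}(\Bv_{i,I}) \ge n^A$, each one-dimensional constraint has small-ball probability at most $O(\eps + n^{-A})$ at scale $\eps = ck/\sqrt{n}$, and the joint event over the $k$ constraints tensorizes thanks to the independence of the reshuffled columns. Choosing $A$ large and balancing the resulting per-tuple bound against the effective net entropy, while tracking the $\log n$ loss coming from the spectral norm $\|L_n\| = \Theta(\sqrt{n\log n})$ that enters the approximate-eigenvector tolerance in \eqref{eqn:Lv}, yields the stated decay $\exp(-\Theta(k^{3/2}/\log n))$ in the regime $k\ge C\log n$.

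The central obstacle, already flagged in Section \ref{sub:overcrowding:discussion}, is the dependence between the diagonal of $L_n$ and its off-diagonal entries: one cannot condition on an $(n-k)\times(n-k)$ principal block and treat the remaining entries as independent, and the naive Schur complement would require control of $\|M_{22}^{-1}\|_{\mathrm{op}}$ that is not currently available. The dual workaround---reshuffle to recreate independence on a random subset $I$, and use a rectangular near-square $M'_{22}$ to avoid inverting a square block---forces the entire argument to rely on delocalization that survives on arbitrary proportional coordinate subsets, which is precisely what the affine no-gaps and no-structure statements of Theorems \ref{thm:non-gap:lap}--\ref{thm:LCD:lap} (together with their approximate-eigenvector version, Theorem \ref{thm:approx:eigenvector}) are designed to deliver.
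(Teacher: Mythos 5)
Your proposal is circular. You propose to invoke Theorems~\ref{thm:non-gap:lap} and \ref{thm:LCD:lap} (plus their approximate-eigenvector version) to prune the eigenvector net, and to use the near-square Schur-complement device from Section~\ref{sub:overcrowding:discussion} to manufacture independent small-ball constraints. But in this paper both of those live \emph{downstream} of Theorem~\ref{thm:lap_overcrowding}: the proof of Theorem~\ref{thm:non-gap:lap} invokes the overcrowding estimate to bound the event $\CE_{drop}$ and thereby to obtain the well-conditioned left inverse $H_{12}$ used in Lemma~\ref{claim:cond:fix}, and the proof of Theorem~\ref{thm:LCD:lap} likewise calls Theorem~\ref{thm:lap_overcrowding} together with Lemma~\ref{lem:lowerbound} to build the near-square left inverse $M_{22}'$. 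So the delocalization theorems and the $M_{22}'$ trick are not available when proving overcrowding; the argument you describe cannot be made to close without a new, independent input.

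The paper's actual proof uses a different toolset altogether and does not discretize eigenvectors at all. It stacks the $k$ low-lying eigenvectors into a $k\times n$ matrix $V$, applies the column-subset-selection/restricted-invertibility Lemma~\ref{lem:lowerbound} to extract a well-conditioned $l\times l$ block, and converts $\|(L_n+F)\Bv_i\|_2\le ck/\sqrt n$ for all $i$ into the event that $l$ columns of $L_n+F$ are simultaneously within distance $O(k^2/(k-l))$ of the span $H$ of the remaining columns. Dependence coming from the degree diagonal is broken by a block-sum switching between the partial sums $\Bc_i^*$ and $\Bc_i^{**}$ on indices $D_i$ where the corresponding binomial row-sums lie in a bulk window, which revives conditionally independent randomness for the distance events; the per-event anticoncentration then comes from the Talagrand-based distance Lemma~\ref{lem:distance}, not from any LCD/Hal\'asz bound. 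The decay $\exp(-\Theta(k^{3/2}/\log n))$ emerges from tuning $l=k/2$, $\tau=K\log n/k$, and $\alpha=\sqrt{k/\log n}$ in that concentration-plus-tensorization argument; it is not the kind of rate that a net-entropy-versus-small-ball trade-off would naturally produce. To salvage your approach you would need an anticoncentration statement for column distances to a (dependent) subspace that is proven without passing through delocalization; as written, the step fails.
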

This result, which can be seen as an analogue of Theorem 1.7 in \cite{NgJFA} where i.i.d.~and symmetric matrices were considered, is interesting in its own right. Interestingly, as we can see, the result works for all choices of deterministic matrices $F$, and hence works for $\wb{L}_{n}$ as well \footnote{The proof automatically extends to the model $\wb{L}'_{n}$.}.

The dependence on $k$ in the exponential probability bound in Theorem \ref{thm:lap_overcrowding} can likely be improved to $k^2$ as in \cite{NgJFA}. However, we do not pursue this extension further in this paper since, for our application, it suffices that when $k = \Theta(n/\log^C n)$, the probability bound is super exponentially small.

\subsection{Small coordinates of eigenvectors}
  
As first observed in \cite{NTV}, a direct consequence of controlling the structure of the eigenvectors of a principal minor is that the eigenvectors of the Laplacian cannot have many coordinates of small size (in fact, we show they cannot have more than one zero coordinate).  The zero coordinates of Laplacian eigenvectors play a special role in nodal domains in spectral graph theory and the physics of mechanical systems, where they are referred to as soft nodes \cite{DLL-nodal, BHL-nodal, CKK-soft, caputo2023eigenvectors}.  For dynamical systems on graphs governed by the Laplacian, soft nodes are of interest theoretically and practically because they are immune to forcing and damping \cite{caputo2012oscillations}.

In the random matrix setting, the following result can be seen as another instance of no-gaps delocalization in the extreme case where the subset is of constant size.  
\begin{theorem} [Small coordinates of eigenvectors] \label{thm:smallcoordinates}
	For any $A$, there exists a $B$, depending on $A$, such that with probability at least $1 - n^{-A}$, an eigenvector of $L_n$ or $\wb{L}_n$ does not have more than one coordinate of size less than $n^{-B}$.
\end{theorem}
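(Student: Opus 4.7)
\medskip
\noindent\textbf{Proof plan.} By a union bound over the $\binom{n}{2}$ unordered pairs of indices, it suffices to bound by $n^{-A-2}$ the probability that some unit eigenvector $\Bv$ of $\wb{L}_n$ satisfies $|v_{n-1}|,|v_n|\le n^{-B}$; the $L_n$ case reduces to this since $L_n-\wb{L}_n=npI-pJ$ acts as a scalar on $\1^\perp$. One may assume the eigenvalue $\mu\ne 0$, so $\Bv\perp\1$, as the kernel eigenvector $\1/\sqrt n$ (available on a connected graph, which occurs w.h.p.) has no small coordinate. Apply the orthogonal change of basis $U$ from Section~\ref{sect:others} mixing coordinates $(n-1,n)$ to pass from $\wb{L}_n$ to $\wb{L}_n' = U^T\wb{L}_n U$ and from $\Bv$ to $\Bv' = U^T\Bv$, which still satisfies $|v'_{n-1}|,|v'_n| \le \sqrt 2\, n^{-B}$ and coincides with $\Bv$ on $[n-2]$. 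Extracting the first $n-1$ rows of $\wb{L}_n'\Bv'=\mu\Bv'$ yields
\begin{equation*}
(\wb{L}'_{n-1}-\mu)\,\Bv'_{[n-1]} \;=\; -(\wb{L}_n')_{[n-1],n}\,v'_n,
\end{equation*}
exhibiting $\Bv'_{[n-1]}$ as an approximate eigenvector of $\wb{L}'_{n-1}$ with residual of $l^2$-norm $O(\sqrt n\cdot n^{-B})$, almost-unit norm, and approximately orthogonal to $\1$.

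I would then condition on the $\sigma$-algebra $\CF$ generated by $\{a_{k,l}\}_{k,l\in[n-2]}$, by $a_{n-1,n}$, and by the sums $\{a_{l,n-1}+a_{l,n}\}_{l\in[n-2]}$; under this conditioning, the matrix $\wb{L}'_{n-1}$ is determined. By Theorem~\ref{thm:approx:eigenvector} (combined with Theorems~\ref{thm:non-gap:lap} and~\ref{thm:LCD:lap}), with probability at least $1-n^{-\omega(1)}$ over $\CF$ every rounded approximate non-trivial eigenvector $\Bw$ of $\wb{L}'_{n-1}$ satisfies $\LCD_{\kappa,\gamma}(\Bw_{I_{n-2}})\ge n^{A'}$ and $\|\Bw_{I_{n-2}}\|_2\ge(\log n)^{-O(1)}$, where $I_{n-2}:=\{l\in[n-2]:a_{l,n-1}+a_{l,n}=1\}$ has size $\Theta(n)$ with overwhelming probability by Chernoff, and $A'$ is any prescribed constant. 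The reshuffling trick from Section~\ref{sect:others} then provides fresh independent randomness: conditional on $\CF$, the differences $\epsilon_l:=a_{l,n}-a_{l,n-1}$ for $l\in I_{n-2}$ are i.i.d.\ fair Rademacher variables independent of $\CF$, appearing in $\wb{L}_n'$ only through $(\wb{L}_n')_{n,l}=-\epsilon_l/\sqrt 2$ for $l\in I_{n-2}$ and through $(\wb{L}_n')_{n,n-1}=-\tfrac12\sum_l\epsilon_l$ (the entry $(\wb{L}_n')_{n,n}$ is $\CF$-measurable).

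Reading the $n$-th row of $\wb{L}_n'\Bv'=\mu\Bv'$ and transferring the terms $\mu v'_n$, $(\wb{L}_n')_{n,n}v'_n$, and $(\wb{L}_n')_{n,n-1}v'_{n-1}$ (each of magnitude $O(n^{1-B})$ in view of the norm bounds on $\wb{L}_n$) to the right, one obtains
\begin{equation*}
\Big|\sum_{l\in I_{n-2}}\epsilon_l\, v'_l\Big| \;=\; O(n^{1-B}).
\end{equation*}
The principal obstacle is that the coefficients $v'_l$ still depend on the $\epsilon_l$'s through $\wb{L}_n'$; I resolve this via the spectral gap bound (Theorem~\ref{thm:gap:lap'}, or the overcrowding estimate of Theorem~\ref{thm:lap_overcrowding}, adapted to $\wb{L}'_{n-1}$) combined with standard Davis--Kahan perturbation theory, which for $B\ge 2$ places $\Bv'_{[n-1]}$ within $l^2$-distance $O(n^{2-B+o(1)})$ of a unique exact eigenvector $\Bu_k$ of $\wb{L}'_{n-1}$ (which is $\CF$-measurable) with probability $1-n^{-\omega(1)}$. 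Substituting $v'_l\approx u_{k,l}$ incurs additional error $O(n^{5/2-B+o(1)})$ in the Rademacher sum, after which the Rudelson--Vershynin small-ball inequality (using $\LCD((\Bu_k)_{I_{n-2}})\ge n^{A'}$ and $\|(\Bu_k)_{I_{n-2}}\|_2\ge(\log n)^{-O(1)}$) bounds the conditional probability by $O(n^{5/2-B+o(1)}+n^{-A'+o(1)})$ for each of the $n-1$ candidate eigenvectors $\Bu_k$. Taking $B=A+8$ and $A'=A+4$ and union bounding over $k$ and the $\binom{n}{2}$ pairs yields the desired $n^{-A}$.
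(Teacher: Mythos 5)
Your proposal follows the same high-level template as the paper---reduce to a small-ball estimate against $\col_n$ using the reshuffling randomness over $I_{n-2}$, then invoke the LCD bound for (approximate) eigenvectors---but it deviates in two substantive ways, and one of those deviations has a gap that makes the argument fail as stated.

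The structural deviation is harmless: you first change basis to $\wb{L}'_n$ and then split off the $(n-1)\times(n-1)$ block, whereas the paper works with $\wb{L}_n$ directly and splits off the $(n-2)\times(n-2)$ block $\wb{L}_{n-2}$. The paper's block is already $\CF$-measurable without any rotation (the diagonal of $\wb{L}_{n-2}$ sees only the sums $a_{l,n-1}+a_{l,n}$), so the basis change is an unnecessary step for this theorem, though not incorrect. The crucial deviation is the Davis--Kahan step. The paper does \emph{not} pass to an exact eigenvector; instead it invokes Theorem~\ref{thm:approx:eigenvector}, which states that, with probability $1-n^{-\omega(1)}$ over $\wb{L}_{n-2}$, \emph{every} rounded approximate eigenvector has large $\LCD$ on $I_{n-2}$, uniformly. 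You correctly flag that the coefficient vector in the Rademacher sum still depends on the fresh randomness, and your instinct to resolve this by snapping to a $\CF$-measurable exact eigenvector is a reasonable one---but the quantitative support you cite does not deliver the probability you claim.

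Concretely, you assert that a spectral gap of size $n^{-3/2+o(1)}$ for $\wb{L}'_{n-1}$ holds ``with probability $1-n^{-\omega(1)}$.'' This is false, and it is false precisely by the theorem you are citing: Theorem~\ref{thm:gap:lap'} gives $\P(\delta_i \le \delta n^{-1/2}) = O(n^{o(1)}\delta)$, so a gap of size $n^{-3/2}$ fails with probability about $n^{-1+o(1)}$, not $n^{-\omega(1)}$. For $A>1$ this failure probability already dominates the target bound $n^{-A-2}$, and after the union bound over $\binom{n}{2}$ pairs the final estimate is worse than $1$. To make the Davis--Kahan route close numerically you would need a gap threshold of roughly $n^{-A-3}$ (so that the gap failure probability is $\lesssim n^{-A-5/2+o(1)}$), and correspondingly $B \gtrsim 2A + O(1)$ rather than $A+8$. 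Two further issues compound this. First, Theorem~\ref{thm:gap:lap'} is a statement about $\wb{L}_n$; the matrix $\wb{L}'_{n-1}$ is a one-dimensional compression of $\wb{L}'_n$, not the centered Laplacian of a smaller Erd\H{o}s--R\'enyi graph, so ``adapting'' the gap theorem to it is a nontrivial extension that you do not supply (and leaning on it comes dangerously close to using the very result the paper is building towards). Second, Theorem~\ref{thm:lap_overcrowding} is not a substitute here: it only controls accumulation of singular values near $0$ (equivalently, eigenvalues near a single point), and gives no information about consecutive spacings elsewhere in the spectrum, so it cannot feed a Davis--Kahan bound. The paper's uniform-over-approximate-eigenvectors formulation is precisely what lets it sidestep the entire gap-plus-perturbation machinery, and your argument needs either that uniformity or a correctly-quantified perturbation step; as written it has neither.
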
 

Naturally, we expect that there are no small coordinates with high probability, as was shown for Wigner matrices in \cite{NTV}.  

\subsection{Further Remarks} By following our method, the no-structure delocalization result, Theorem \ref{thm:LCD:lap}, should hold for other models such as non-symmetric, symmetric, and skew symmetric matrices of independent subgaussian entries of mean zero and variance one. In fact, we suspect that for these models, the result should be stronger, that $\la_{n}$ can be as small as $n^{-c}$ for some positive constant $c$ (or even as small as $n^{-1+c}$), and the probability bound $n^{-\omega(1)}$ might be improved to a subexponential rate at least. 

Additionally, it seems to be an important problem to extend these results to Laplacians of sparse graphs $G(n,p)$, where $p \rightarrow 0$ as $n \rightarrow \infty$, as well as to random $d$-regular graphs (for either fixed $d$ or $d\to \infty$ with $n$). Finally, a natural direction for future research is to study the law of the minimum gap of these random Laplacians, with a focus on demonstrating universality.

\section{Supporting Lemmas}\label{Section:supporting}

We will frequently make use of the following deterministic lemma.  
\begin{lemma}[Theorem 6 of \cite{NY}] \label{lem:lowerbound}
        Let \(A:\R^{n} \to \R^{k}\) of full rank \(k\). Then for every \(1 \leq l \leq k - 1\), there exists an \(I \subseteq  \{1, \ldots, n\} \) of size $l$ such that if \(A_I\) denotes \(A \mid_{\R^{I}}\), then
        \begin{align*}
            \sigma_l(A_{I}) \geq c \max_{r \in \{l + 1, \ldots, k\}}\sqrt{\frac{(r - l)\sum_{i=r}^{n} \sigma_i(A)^2}{nr}},
        \end{align*}
        for an absolute constant $c$.
    \end{lemma}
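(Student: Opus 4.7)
The lemma is a restricted-invertibility theorem of Bourgain-Tzafriri type, in a form where the lower bound on $\sigma_{l}(A_{I})$ depends on the full singular value profile of $A$ through the tail sums $\sum_{i\geq r}\sigma_{i}(A)^{2}$. My plan is to reduce it to a statement about principal submatrices of a PSD matrix and then apply an interlacing-polynomial selection in the style of Marcus-Spielman-Srivastava.

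Since the right-hand side is a maximum over $r$, it suffices to prove, for each fixed $r\in\{l+1,\dots,k\}$ separately, the existence of an $l$-subset $I_{r}\subseteq[n]$ with
\[
\sigma_{l}(A_{I_{r}})\;\geq\; c\sqrt{\tfrac{(r-l)\sum_{i\geq r}\sigma_{i}(A)^{2}}{nr}};
\]
the final $I$ is then the one coming from the $r^{*}$ that maximizes the bound. For fixed $r$, I would rewrite $\sigma_{l}(A_{I})^{2}=\lambda_{\min}\!\left((A^{T}A)_{I\times I}\right)$ and observe that, with the spectral decomposition $A^{T}A=\sum_{\alpha=1}^{k}\sigma_{\alpha}^{2}\,v_{\alpha}v_{\alpha}^{T}$, the task becomes the familiar one of extracting an $l\times l$ principal submatrix of this rank-$k$ PSD matrix whose smallest eigenvalue is controlled by the tail eigenvalues $\sigma_{r}^{2},\dots,\sigma_{k}^{2}$.

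The key step is the selection of $I$. I would draw $I$ either uniformly over $l$-subsets of $[n]$ or by independent $\mathrm{Bernoulli}(l/n)$ inclusion, form the expected characteristic polynomial of $(A^{T}A)_{I\times I}$, and apply the Marcus-Spielman-Srivastava interlacing-family method to produce a deterministic $I$ for which the smallest eigenvalue is at least the $l$-th smallest root of that expected polynomial. A barrier-method analysis of this polynomial, in the spirit of Batson-Spielman-Srivastava and its sharpening by Naor-Youssef, then yields the claimed bound; the $(r-l)/r$ factor arises because, in order to push the lower barrier past the top $r-1$ eigenvalues of the underlying matrix, one effectively spends $l$ out of $r$ units of barrier budget, so that only the tail Frobenius mass $\sum_{i\geq r}\sigma_{i}^{2}$ remains to be distributed across the $l$ secured directions.

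The hardest part I expect is the final barrier analysis — producing the sharp $(r-l)/r$ dependence. Coordinate restriction to $I$ does not commute with spectral truncation to the bottom $k-r+1$ singular directions, so one cannot simply apply a vanilla restricted-invertibility result to the low-rank tail $\sum_{\alpha\geq r}\sigma_{\alpha}^{2}v_{\alpha}v_{\alpha}^{T}$: any splitting via Weyl's inequality loses the correct $r$-dependence whenever the rank of the top truncation exceeds the free dimension $k-l$. The interlacing-polynomial framework bypasses this obstacle by operating at the level of characteristic polynomials, where mixed-discriminant identities permit a clean separation of the top-$(r-1)$ and tail contributions, and the barrier shift $(r-l)/r$ can be extracted by a careful tracking of barrier derivatives.
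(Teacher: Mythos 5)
The paper does not prove this lemma; it is imported verbatim as Theorem~6 of Naor and Youssef's \emph{Restricted Invertibility Revisited}, so there is no in-paper argument to compare against. Your reduction is sound as far as it goes: since the right-hand side is a maximum over $r$, it suffices to fix the maximizing $r$ and produce a single $I$ of size $l$ with $\sigma_l(A_I)^2=\lambda_{\min}\left((A^{T}A)_{I\times I}\right)\gg (r-l)\sum_{i\geq r}\sigma_i(A)^2/(nr)$, and you correctly observe both that this is a principal-submatrix problem for the rank-$k$ PSD matrix $A^{T}A$ and that a naive spectral truncation followed by Weyl splitting cannot deliver the $(r-l)/r$ dependence.

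There is, however, a genuine gap: the step that actually produces the bound is handed off rather than carried out. The sentence ``a barrier-method analysis of this polynomial, in the spirit of Batson--Spielman--Srivastava and its sharpening by Naor--Youssef, then yields the claimed bound'' invokes Naor--Youssef's own sharpening inside a proof of Naor--Youssef's Theorem~6, which is circular. Moreover, it is asserted but not argued that a uniformly random $l$-subset of $[n]$ (or independent Bernoulli$(l/n)$ inclusion) gives rise to an interlacing family for the characteristic polynomials $\chi\!\left[(A^{T}A)_{I\times I}\right]$ with a common interlacer whose $l$-th smallest root is controlled by the expected characteristic polynomial, and it is likewise asserted but not shown that a barrier-potential computation on that expected polynomial crosses the top $r-1$ eigenvalues of $A^{T}A$ while leaving at least $c^{2}(r-l)\sum_{i\geq r}\sigma_i^{2}/(nr)$ of budget per coordinate. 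Establishing these two facts is precisely the non-trivial content of Naor--Youssef's theorem; as written, the sketch correctly locates the difficulty (the coupling between coordinate restriction and spectral truncation) and correctly names the machinery that resolves it, but it does not itself supply the argument.
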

Next, we need the following elementary fact for the centered Laplacian matrices.
\begin{lemma}[Norm of centered Laplacian matrix]\label{lemma:norm:lap} We have 
$$\P(\|\wb{L}_{n}\|_2 \ge \la \sqrt{n} \log n) \ll \exp(-\Theta(\la^2 \log n)).$$
In particular, with probability at least $1- \exp(-\Theta(\log^2n))$, all submatrices of $\wb{L}_{n}$ have norm bounded by $\sqrt{n} \log n$.
\end{lemma}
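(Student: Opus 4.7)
The plan is to decompose $\wb{L}_n$ into its diagonal and off-diagonal parts and control each using standard concentration inequalities for independent bounded random variables. Write
$$\wb{L}_n \;=\; (D_n - \E D_n) \;-\; (A_n - \E A_n) \;=:\; \wb{D}_n - \wb{A}_n,$$
where $\wb{D}_n$ is diagonal and $\wb{A}_n$ is symmetric with zero diagonal and independent above-diagonal entries. By the triangle inequality, it suffices to bound each piece by $\lambda\sqrt{n}\log n/2$ with probability at least $1 - \tfrac{1}{2}\exp(-\Theta(\lambda^2\log n))$.

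For $\wb{D}_n$: its operator norm equals $\max_i |d_{ii}-(n-1)p|$, and each $d_{ii}-(n-1)p$ is a centered sum of $n-1$ i.i.d.\ Bernoulli$(p)$ random variables. Chernoff's inequality gives $\P(|d_{ii}-(n-1)p|\geq s) \leq 2\exp(-\Theta(s^2/n))$ in the moderate deviation range, and a union bound over $i\in[n]$ with $s = \lambda\sqrt{n}\log n/2$ yields
$$\P\bigl(\|\wb{D}_n\|_2 \geq \lambda\sqrt{n}\log n/2\bigr) \;\leq\; 2n\exp(-\Theta(\lambda^2\log^2 n)),$$
which is comfortably smaller than the target.

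For $\wb{A}_n$: this is a symmetric matrix with independent centered entries bounded by $1$. The operator norm is a $1$-Lipschitz function of the matrix entries in the Frobenius metric, so Talagrand's concentration inequality for convex $1$-Lipschitz functions of independent bounded variables gives
$$\P\bigl(\|\wb{A}_n\|_2 \geq \E\|\wb{A}_n\|_2 + t\bigr) \;\leq\; 2\exp(-\Theta(t^2)).$$
Combining with the classical Wigner-type estimate $\E\|\wb{A}_n\|_2 = O(\sqrt{n})$ and taking $t = \lambda\sqrt{n}\log n/4$ yields $\P(\|\wb{A}_n\|_2 \geq \lambda\sqrt{n}\log n/2) \leq 2\exp(-\Theta(\lambda^2 n\log^2 n))$, which overwhelmingly dominates the required bound. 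Combining the two estimates via the triangle inequality proves the first assertion.

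The ``in particular'' statement follows by choosing $\lambda$ of the appropriate order, together with the trivial observation that for any submatrix $M$ obtained by restricting $\wb{L}_n$ to a pair of index sets $I\times J\subseteq [n]\times[n]$, one has $\|M\|_2 \leq \|\wb{L}_n\|_2$ from the variational characterization of the operator norm. I do not anticipate a genuine obstacle in this argument: the key inputs---Chernoff's bound for sums of independent Bernoullis, Talagrand-type concentration for Lipschitz functions of bounded independent variables, and the standard $O(\sqrt{n})$ bound for the expected norm of a Wigner-type matrix---are all classical, and the only care needed is in tracking the split between the diagonal and off-diagonal contributions so that the dependence on the diagonal entries on the same column does not disturb the independence used in the off-diagonal concentration.
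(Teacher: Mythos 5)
Your proof is correct, and it fleshes out the first route the paper only gestures at before switching to an ``alternative''. You split $\wb{L}_n = (D_n-\E D_n) - (A_n-\E A_n)$ and control each piece separately: Chernoff plus a union bound for the diagonal (which is the dominant contribution, since the degree fluctuations have operator norm of order $\sqrt{n\log n}$), and the classical Wigner bound $\E\|A_n-\E A_n\|_2 = O(\sqrt{n})$ plus Talagrand for the off-diagonal part. The paper's written-out proof instead applies Talagrand's inequality directly to $\|\wb{L}_n\|_2$: as a function of the $\binom{n}{2}$ independent entries $a_{ij}$ the map $a \mapsto \|\wb{L}_n\|_2$ is convex and $O(\sqrt{n})$-Lipschitz --- the inflated Lipschitz constant arises because each $a_{ij}$ perturbs two diagonal entries of $\wb{L}_n$ --- and the proof then invokes an external result (Theorem 3.2 of the cited reference) for the median $M(\|\wb{L}_n\|_2) = \Theta(\sqrt{n\log n})$. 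Each approach buys something: yours is self-contained, needing only the expected-norm estimate for a mean-zero Wigner matrix, while the paper's treats $\wb{L}_n$ in one stroke and makes the dependence structure transparent through the $\sqrt{n}$-Lipschitz constant. One housekeeping remark: Talagrand's inequality gives concentration around the \emph{median} rather than the mean, so your step replacing the median of $\|\wb{A}_n\|_2$ with $\E\|\wb{A}_n\|_2$ implicitly uses the standard fact that for a random variable with sub-Gaussian concentration around its median, mean and median differ by $O(1)$; this is harmless but worth saying, and the Lipschitz constant for $\|\wb{A}_n\|_2$ as a function of $(a_{ij})_{i<j}$ is $\sqrt{2}$ rather than $1$, which of course does not affect any $\Theta(\cdot)$ bound.
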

\begin{proof}[Proof of Lemma \ref{lemma:norm:lap}]
    By the triangle inequality 
    $$\|\wb{L}_{n}\|_{2} \le  \|D_{n}-\E D_{n}\|_{2} + \|A_{n} - \E A_{n}\|_{2}.$$
    We can then bound $\|D_{n}-\E D_{n}\|_{2}$ and $\|A_{n} - \E A_{n}\|_{2}$ separately. Alternatively, it follows from \cite[Theorem 3.2]{Band} that the median of $\|\wb{L}_{n}\|_2$ has order $\Theta(\sqrt{ n \log n})$. Then by Talagrand concentration theorem, as $\|\wb{L}_{n}\|_2$ is convex and $\sqrt{n}$-Lipschitz with respect to the $\|.\|_{HS}$-norm, we have 
    $$ \P(|\|\wb{L}_{n}\|_2 - M(\|\wb{L}_{n}\|_2)| \ge \sqrt{n} \la ) \le \exp(-\Theta(\la^2)).$$
\end{proof}

Occasionally, in many proofs we will need the following tensorization lemma to transform anti-concentration bounds from independent random variables to random vectors (\cite[Lemma 3.4]{RV-LO}).
\begin{lemma}\label{lem:tensorization}
  Let $X = (x_1, \ldots, x_n)$ be a random vector in $\R^n$ with independent 
  coordinates $X_k$. Assume that $\P(|x_{i}| < \delta) \le K \delta$ for all $\delta \ge \delta_{0}$. Then
  $$ \P(x_{1}^{2}+\dots + x_{n}^{2} < \delta^{2} n) \le (C_{0}K \delta)^{n} $$ 
  for some absolute constant $C_{0}$.    
\end{lemma}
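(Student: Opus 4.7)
The plan is to apply the classical exponential moment (Chernoff-style) argument that converts a per-coordinate small-ball estimate into a small-ball estimate for $\|X\|_2^2$ via the product structure coming from independence. I would first rewrite
\begin{equation*}
\Bigl\{\sum_{i=1}^n x_i^2 < \delta^2 n\Bigr\} = \Bigl\{e^{-\sum_i x_i^2/\delta^2} > e^{-n}\Bigr\},
\end{equation*}
and then apply Markov's inequality together with independence of the $x_i$ to obtain
\begin{equation*}
\P\Bigl(\sum_{i=1}^n x_i^2 < \delta^2 n\Bigr) \;\le\; e^n \prod_{i=1}^n \E\, e^{-x_i^2/\delta^2}.
\end{equation*}
The task then reduces to showing $\E\, e^{-x_i^2/\delta^2} \le C K\delta$ for some absolute constant $C$ (under $\delta \ge \delta_0$); the conclusion will follow with $C_0 = eC$.

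For the one-dimensional estimate, I would use the layer-cake representation of the non-negative random variable $e^{-x_i^2/\delta^2}\in(0,1]$ together with the substitution $t = e^{-u^2/\delta^2}$ to rewrite
\begin{equation*}
\E\, e^{-x_i^2/\delta^2} = \int_0^\infty \P(|x_i|<u)\,\frac{2u}{\delta^2}\,e^{-u^2/\delta^2}\,du,
\end{equation*}
and then split the integral at $u = \delta_0$. On the tail $[\delta_0,\infty)$ the hypothesis gives $\P(|x_i|<u)\le Ku$, and a standard Gaussian second-moment computation yields a contribution of order $K\delta$. On $[0,\delta_0]$ I use only the trivial bound $\P(|x_i|<u)\le K\delta_0\le K\delta$, and the remaining weight $\int_0^{\delta_0}\frac{2u}{\delta^2}e^{-u^2/\delta^2}\,du$ is at most $1$. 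Summing the two contributions delivers the required bound on the Laplace transform.

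There is no essential obstacle here: this is a textbook tensorization argument. The only minor subtlety is the regime $u<\delta_0$ where the anti-concentration hypothesis is unavailable; this is absorbed cleanly into a factor of $K\delta_0\le K\delta$, which is harmless precisely because we claim the conclusion only for $\delta\ge\delta_0$.
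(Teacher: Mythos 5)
Your proposal is correct and matches the paper's proof almost verbatim: both use the exponential Markov bound $\P(\sum x_i^2 < \delta^2 n) \le e^n\prod_i \E e^{-x_i^2/\delta^2}$, followed by the layer-cake/substitution to write the Laplace transform as a Gaussian-weighted integral of the small-ball probability, then split the integral to exploit $\P(|x_i|<u)\le Ku$ on the tail and the trivial monotone bound near the origin. The only cosmetic difference is the split point (you split at $\delta_0$, the paper at $\delta$); both are valid and yield the same $O(K\delta)$ bound on each factor.
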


\begin{proof}[Proof of Lemma \ref{lem:tensorization}]
Let $\delta \ge \delta_0$, we have the following by Chebyshev's inequality:
$$
\P \Big( \sum_{i=1}^n x_i^2 < \delta^2 n \Big)
  = \P \Big( n - \frac{1}{\delta^2} \sum_{i=1}^n x_i^2 > 0 \Big)
  \le \mathbb{E} \exp \Big( n - \frac{1}{\delta^2} \sum_{i=1}^n x_i^2 \Big) 
  = e^n \prod_{i=1}^n \mathbb{E} \exp(-x_i^2/\delta^2).
$$
Integrating the tail bound, we have 
$$
\mathbb{E} \exp(-x_i^2/\delta^2) = \int_0^1 \P \big( \exp(-x_i^2/\delta^2) > s
\big) \; ds = \int_0^\infty 2 u e^{-u^2} \, \P(|x_i| < \delta u) \; du.
$$
Decomposing the integration and using the assumption that $\P(|x_i| < \delta u) \leq \P(|x_i| < \delta) \leq K \delta$ for $u \in [0,1]$, we have $$\mathbb{E} \exp(-x_i^2/\delta^2) \leq \int_0^1 2 u e^{-u^2} K \delta \; du + \int_1^\infty 2 u e^{-u^2} K \delta u \; du \leq C_0 K \delta.
$$ 
Plugging this back to the first inequality, we have $$\P \Big( \sum_{i=1}^n x_i^2< \delta^2 n \Big) <e^n (C_0 K \delta)^n.$$ 
\end{proof}

For the remainder of this section we will discuss several properties of vectors having small $\LCD$, including compressible and incompressible vectors. First, the following  vectors are outputs of our no-gaps delocalization result Theorem \ref{thm:non-gap:lap}.

\begin{claim}\label{claim:non-gap:spread} Let $0<c_{1} \le  c_{0}<1< C$ be given parameters (that might depend on $n$). Assume that $\Bx\in S^{n-1}$ be such that $\|\Bx_{I}\|_{2} \ge c_{1}$ for any $I \subset [n]$ of size at least $\lfloor c_{0}n \rfloor$. Then the following holds.
\begin{itemize}
\item For any set $S$ of size $\lfloor c_{0}n \rfloor$, there exists an $i\in S$ such that $|x_{i}| \ge (c_{1}/c_{0})^{{1/2}}/\sqrt{n}$. It thus follows that for all but $\lfloor c_{0}n \rfloor -1$ indices $i$ we have $|x_{i}| \ge (c_{1}/c_{0})^{{1/2}}/\sqrt{n}$.
\vskip .1in
\item The set $S_{c_{0},c_{1},C}$ of indices $i$ where 
$$(c_{1}/c_{0})^{{1/2}}/\sqrt{n} \le |x_{i}| \le C/\sqrt{n}$$ 
has size at least $(1- c_{0}-C^{-2})n$.
\end{itemize}
\end{claim}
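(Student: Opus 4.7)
The plan is to prove both bullets by elementary pigeonhole, using only the hypothesis $\|\Bx_I\|_{2} \ge c_{1}$ for $|I| \ge \lfloor c_{0} n \rfloor$ together with the unit-norm constraint $\|\Bx\|_{2} = 1$. No further structure of $\Bx$ is needed.

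For the first bullet I would proceed by contradiction. Fix a set $S$ of size $\lfloor c_{0} n \rfloor$ and suppose that every $i \in S$ satisfies $|x_i| < (c_{1}/c_{0})^{1/2}/\sqrt{n}$. Summing squares would give
$$\|\Bx_S\|_{2}^{2} \;<\; |S| \cdot \frac{c_{1}}{c_{0}\, n} \;\le\; c_{1},$$
which contradicts the assumed mass lower bound on $\Bx$ restricted to any subset of size $\lfloor c_{0} n \rfloor$. Hence the required index $i \in S$ must exist. The second half of the bullet then follows by contrapositive: if the set $T := \{i : |x_i| < (c_{1}/c_{0})^{1/2}/\sqrt{n}\}$ had $|T| \ge \lfloor c_{0} n \rfloor$, then any subset of $T$ of exactly that size would contradict what was just established, so $|T| \le \lfloor c_{0} n \rfloor - 1$.

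For the second bullet I would combine part (i) with a direct $\ell_2$-budget count for the large coordinates. Writing
$$T_1 := \bigl\{i : |x_i| < (c_{1}/c_{0})^{1/2}/\sqrt{n}\bigr\}, \qquad T_2 := \bigl\{i : |x_i| > C/\sqrt{n}\bigr\},$$
part (i) gives $|T_1| \le \lfloor c_{0} n \rfloor - 1 \le c_{0} n$. For $T_2$, each $i \in T_2$ contributes strictly more than $C^{2}/n$ to $\sum_i x_i^{2} = 1$, so $|T_2| < n/C^{2}$. A union bound then yields $|T_1 \cup T_2| \le (c_{0} + C^{-2})\, n$, so that $|S_{c_{0}, c_{1}, C}| = n - |T_1 \cup T_2| \ge (1 - c_{0} - C^{-2})\, n$, which is the claim.

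Both steps are purely deterministic consequences of a pigeonhole estimate paired with the fixed $\ell_2$-budget, and I do not anticipate any serious obstacle. The one point requiring attention is matching the threshold $(c_{1}/c_{0})^{1/2}/\sqrt{n}$ to the hypothesis $\|\Bx_I\|_{2} \ge c_{1}$ at the correct (squared) scale, so that when this claim is later applied downstream of Theorem~\ref{thm:non-gap:lap}, the parameter $c_1$ is taken as the appropriate mass bound produced by the no-gaps delocalization estimate.
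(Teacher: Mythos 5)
Your proof tracks the paper's own one-line argument exactly: the paper proves only the first bullet, by the same contradiction (``$\|\Bx_{S}\|^{2}\le |S| (c_{1}/c_{0})/n <c_{1}$, contradiction''), and says the rest follows; your expansion of the second bullet via the count $|T_1 \cup T_2| \le (c_0 + C^{-2})n$ is the routine fill-in that ``it suffices to show the first part'' is meant to convey.

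However, the point you flag at the end as ``requiring attention'' is a genuine scaling mismatch that neither your argument nor the paper's resolves. The hypothesis is $\|\Bx_I\|_2 \ge c_1$, not $\|\Bx_I\|_2^2 \ge c_1$. If every $i \in S$ satisfied $|x_i| < (c_1/c_0)^{1/2}/\sqrt{n}$, you correctly derive $\|\Bx_S\|_2^2 < c_1$, which gives $\|\Bx_S\|_2 < \sqrt{c_1}$. But since $c_1 \le c_0 < 1$, we have $\sqrt{c_1} > c_1$, so $\|\Bx_S\|_2 < \sqrt{c_1}$ is perfectly compatible with $\|\Bx_S\|_2 \ge c_1$; there is no contradiction. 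With the stated (unsquared) hypothesis, the threshold this pigeonhole actually certifies is $c_1/\sqrt{c_0 n}$, not $\sqrt{c_1/(c_0 n)}$. This is a constant-scale defect already present in the claim as printed: it propagates harmlessly into Lemma~\ref{lem:incomp_lcd} and its later uses because there $c_1$ is only polylogarithmic and only the $n^{-o(1)}$ scale matters, but for a clean statement one should either lower the threshold to $c_1/\sqrt{c_0 n}$ or read the hypothesis as $\|\Bx_I\|_2^2 \ge c_1$. Your proposal reproduces the paper's argument faithfully but, by itself, does not close this gap.
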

\begin{proof} It suffices to show the first part. Assume otherwise, then we would have $\|\Bx_{S}\|^{2}\le |S| (c_{1}/c_{0})/n <c_{1}$, contradiction.
\end{proof}

Given parameters $c_{0},c_{1},C$, we define the \( \mathrm{spread}_{c_{0}, c_{1},C}(\Bx) \) to be the set of  coordinates \( k \) satisfying
   \begin{equation*}
   (c_{1}/c_{0})^{{1/2}}/\sqrt{n} \le |x_{i}| \le C/\sqrt{n}.
   \end{equation*}

The next lemma connects the notions of compressible vectors and \( \LCD \).

\begin{lemma}\label{lem:incomp_lcd} Assume that $\Bx$ satisfies Claim \ref{claim:non-gap:spread}.  Then
\begin{equation*}
        \LCD_{\kappa, \gamma}(\Bx) \geq  \frac{\sqrt{n}}{2C}
    \end{equation*}
    for any $\kappa >0$ and $\gamma \le \frac{1}{2}[(1- c_{0}-C^{-2}) (c_{1}/c_{0})]^{{1/2}}$.
\end{lemma}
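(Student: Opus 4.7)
The plan is to verify the lower bound on the LCD directly from Definition \ref{def:LCD}. Since $\|\Bx\|_2 = 1$, it suffices to show that for every $\theta \in (0, \sqrt{n}/(2C))$, one has $\dist(\theta\Bx, \Z^n) \ge \gamma\theta$, which trivially dominates $\min\{\gamma\|\theta\Bx\|_2, \kappa\}$; the parameter $\kappa$ therefore plays no role.

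The main observation is to leverage the spread set $S = \spread_{c_0,c_1,C}(\Bx)$ produced by Claim \ref{claim:non-gap:spread}. On this set, entries are simultaneously small (bounded above by $C/\sqrt{n}$) and non-negligible (bounded below by $(c_1/c_0)^{1/2}/\sqrt{n}$), and $|S| \ge (1-c_0-C^{-2})n$. The upper bound is what makes the argument work: for $\theta < \sqrt{n}/(2C)$ and $i \in S$,
\[
|\theta x_i| \;<\; \frac{\sqrt{n}}{2C}\cdot\frac{C}{\sqrt{n}} \;=\; \frac{1}{2},
\]
so the nearest integer to $\theta x_i$ is precisely $0$. This freezes the integer-rounding on $S$ and converts the nonlinear rounding-distance into a linear quantity there.

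The computation is then immediate. Restricting the $\ell_2$-distance to $\Z^n$ to coordinates in $S$ gives
\[
\dist(\theta\Bx, \Z^n)^2 \;\ge\; \sum_{i\in S}(\theta x_i)^2 \;=\; \theta^2\,\|\Bx_S\|_2^2 \;\ge\; \theta^2\,|S|\cdot\frac{c_1/c_0}{n} \;\ge\; \theta^2(1-c_0-C^{-2})(c_1/c_0),
\]
using the pointwise lower bound on $|x_i|$ for $i\in S$. Taking square roots and invoking the hypothesis $\gamma \le \tfrac12[(1-c_0-C^{-2})(c_1/c_0)]^{1/2}$ produces
\[
\dist(\theta\Bx, \Z^n) \;\ge\; \theta\bigl[(1-c_0-C^{-2})(c_1/c_0)\bigr]^{1/2} \;\ge\; 2\gamma\theta \;>\; \gamma\|\theta\Bx\|_2,
\]
so $\theta$ does not lie in the set defining $\LCD_{\kappa,\gamma}(\Bx)$. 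Taking the infimum over admissible $\theta$ gives the claim.

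There is essentially no obstacle here --- the statement is a clean structural consequence of Claim \ref{claim:non-gap:spread}. The only subtlety worth flagging is the factor of $2$ safety built into the hypothesis on $\gamma$, which seems included purely for later convenience; the argument would in fact go through verbatim under the weaker assumption $\gamma < [(1-c_0-C^{-2})(c_1/c_0)]^{1/2}$.
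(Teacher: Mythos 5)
Your proof is correct and follows the paper's argument essentially verbatim: both restrict to the spread set $S$, use that $|\theta x_i| < 1/2$ there (so the rounding distance equals $|\theta x_i|$), sum the lower bounds, and compare with $\gamma\theta$. Your explicit flagging of the $|\theta x_i|<1/2$ rounding step and your closing observation that the factor of $2$ in the hypothesis on $\gamma$ is slack are both accurate and add a bit of clarity over the paper's more terse presentation, but the underlying argument is the same.
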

It is worth noting that as \( \gamma \) decreases, $\LCD_{\kappa,\gamma}$ increases.
\begin{proof}[Proof of Lemma \ref{lem:incomp_lcd}]
    For each \( k\in \mathrm{spread}(\Bx) \),
    \begin{equation*}
   (c_{1}/c_{0})^{{1/2}}/\sqrt{n} \le |x_{k}| \le C/\sqrt{n}.
    \end{equation*}
   For all \( \theta \in (0, \frac{\sqrt{n}}{2C}) \), and \( k\in \mathrm{spread}(\Bx) \),
    \begin{equation*}
        \mathrm{dist}(\theta x_k, \Z) =|\theta x_{k}| \geq  \theta  (c_{1}/c_{0})^{{1/2}}/\sqrt{n}.
    \end{equation*}
    Therefore, for \(  \theta \in (0, \frac{\sqrt{n}}{2C})  \),
    \[
        \mathrm{dist}(\theta \Bx, \Z^{n}) \geq \mathrm{dist}(\theta \Bx|_{\mathrm{spread}(\Bx)}, \Z^{\mathrm{spread}(\Bx)}) \geq (1- c_{0}-C^{-2})^{1/2}   (c_{1}/c_{0})^{{1/2}} \theta > \gamma \theta.
    \]
    Therefore, 
    \begin{equation*}
        \LCD_{\kappa, \gamma}(\Bx) \geq \frac{\sqrt{n}}{2C}.
    \end{equation*}
\end{proof}

\begin{fact}[$\LCD$ of subvectors]\label{LCD:scaling} Assume that $\Bx'$ is a subvector of $\Bx$ such that $\gamma \|\Bx\|_{2}/\|\Bx'\|_{2} <1$. We have 
\[\LCD_{\kappa, \gamma \|\Bx\|_{2}/\|\Bx'\|_{2}}\left(\frac{\Bx'}{\|\Bx'\|_{2}}\right) \le \frac{\|\Bx'\|_{2}}{\|\Bx\|_{2}} \LCD_{\kappa, \gamma}\left( \frac{\Bx}{\|\Bx\|_{2}}\right).\]

\end{fact}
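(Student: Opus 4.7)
The claim follows by a direct ``change of scale'' argument from the definition of $\LCD_{\kappa,\gamma}$, so I would not anticipate any serious obstacle. The plan is as follows. Let $\Bu = \Bx/\|\Bx\|_2$ and $\Bu' = \Bx'/\|\Bx'\|_2$, and write $I \subset [n]$ for the index set on which $\Bx'$ is supported, so that $\Bx' = \Bx_I$. Set $D := \LCD_{\kappa,\gamma}(\Bu)$. By definition of the infimum, for every $\varepsilon > 0$ there exists $\theta \le D + \varepsilon$ with
\[
\dist(\theta \Bu, \Z^n) < \min\{\gamma\, \theta,\ \kappa\}.
\]
The key observation is the rescaling identity
\[
\bigl(\theta\, \|\Bx'\|_2/\|\Bx\|_2\bigr)\,\Bu' \;=\; \theta\, \Bx_I / \|\Bx\|_2,
\]
so that $\theta' := \theta\, \|\Bx'\|_2/\|\Bx\|_2$ pulls back the scaled subvector $\theta'\Bu'$ to the $I$-restriction of $\theta \Bu$.

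Since restricting a vector to a subset of coordinates can only decrease its distance to the corresponding integer lattice, we obtain
\[
\dist(\theta'\Bu',\Z^I) \;\le\; \dist(\theta \Bu, \Z^n) \;<\; \min\{\gamma\, \theta,\ \kappa\} \;=\; \min\!\left\{\tfrac{\gamma \|\Bx\|_2}{\|\Bx'\|_2}\cdot \theta',\ \kappa\right\},
\]
where in the last step I substituted $\theta = \theta' \|\Bx\|_2/\|\Bx'\|_2$. Thus $\theta'$ is an admissible value in the definition of $\LCD_{\kappa,\gamma\|\Bx\|_2/\|\Bx'\|_2}(\Bu')$ (the assumption $\gamma\|\Bx\|_2/\|\Bx'\|_2 < 1$ ensures this parameter lies in $(0,1)$ so the definition applies), which gives
\[
\LCD_{\kappa,\gamma\|\Bx\|_2/\|\Bx'\|_2}(\Bu') \;\le\; \theta' \;=\; \frac{\|\Bx'\|_2}{\|\Bx\|_2}\,\theta \;\le\; \frac{\|\Bx'\|_2}{\|\Bx\|_2}\bigl(D + \varepsilon\bigr).
\]
Letting $\varepsilon \to 0$ yields the stated inequality. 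The only mildly delicate point is ensuring that the strict inequality in the defining set is preserved under restriction; but this is automatic because dropping coordinates can only shrink the distance to the lattice, so no further case analysis is needed.
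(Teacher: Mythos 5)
Your proof is correct and takes essentially the same approach as the paper: both identify that restricting to a coordinate subset can only decrease the distance to the integer lattice, and then verify that the rescaled $\theta'=\theta\,\|\Bx'\|_2/\|\Bx\|_2$ satisfies the LCD condition for $\Bu'$ with the enlarged parameter $\gamma\|\Bx\|_2/\|\Bx'\|_2$. The paper's version simply normalizes $\|\Bx\|_2=1$ up front and states the rescaling identity in one line, whereas you make the $\varepsilon$-approximation of the infimum explicit, but the argument is otherwise identical.
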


\begin{proof}[Proof of Fact \ref{LCD:scaling}] It suffices to assume $\|\Bx\|_{2}=1$. Note that for any $D>0$, if $\dist (D \Bx, \Z^{n}) \le \min(\gamma D, \kappa)$ then trivially 
$$\dist((D\|\Bx'\|_{2})(\Bx'/ \|\Bx'\|_{2}), \Z^{n}) \le \min((\gamma/\|\Bx'_{2}\|_{2}) \| (D\|\Bx'\|_{2})(\Bx'/ \|\Bx'\|_{2})\|_{2},\kappa).$$ 
\end{proof}
One of the main features of $\LCD$ is that it detects the level of radius where we can bound the small ball probability asymptotically optimally (up to a multiplicative constant).

\begin{theorem}[Small ball probability via LCD, \cite{RV-LO}]\label{thm:small_ball_prob_LCD}
    Let \( \xi \) be a subgaussian random variable of mean zero and variance one, and let \( \xi_{1}, \cdots, \xi_{n} \) be i.i.d.~copies of \( \xi \). Consider a vector \( \Bx\in \R^{n} \) which satisfies \( \left\|\Bx\right\| \geq 1 \). Then, for every \( \kappa > 0 \) and \( \gamma \in (0,1) \), and for
    \begin{equation*}
        \varepsilon \geq \frac{1}{\LCD_{\kappa, \gamma}(\Bx)},
    \end{equation*}
    we have
    \begin{equation*}
    \rho_{\varepsilon}(\Bx) := \mathcal{L}(\sum_{i=1}^{n} x_i \xi_i, \varepsilon) := \sup_{a\in\R}\mathbb{P} \left(\left| \sum_{i=1}^{n} x_i \xi_i - a\right| \leq \varepsilon\right) = O \left(\frac{\varepsilon}{\gamma} + e^{- \Theta(\kappa^2)}\right),
    \end{equation*}
    where the implied constants depend on \( \xi \).
\end{theorem}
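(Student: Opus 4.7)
The plan is to reduce the small ball probability to an integral of the characteristic function via Esseen's inequality, and then control that integral using the $\LCD$. By monotonicity of L\'evy concentration under rescaling --- since $\rho_\varepsilon(\Bx) = \rho_{\varepsilon/\|\Bx\|}(\Bx/\|\Bx\|) \le \rho_\varepsilon(\Bx/\|\Bx\|)$ when $\|\Bx\|\ge 1$ --- I may assume $\|\Bx\|_2 = 1$, which also matches the normalization convention in Definition \ref{def:LCD}. Esseen's inequality then gives
\begin{equation*}
\rho_\varepsilon(\Bx) \;\le\; C\varepsilon \int_{|\tau| \le 1/(2\pi\varepsilon)} |\varphi_Z(2\pi\tau)|\, d\tau, \qquad \varphi_Z(t) = \prod_{i=1}^{n} \varphi_\xi(tx_i),
\end{equation*}
where $Z := \sum_i x_i\xi_i$. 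The hypothesis $\varepsilon \ge 1/\LCD_{\kappa,\gamma}(\Bx)$ ensures the entire integration range lies inside $|\tau| \le \LCD_{\kappa,\gamma}(\Bx)/(2\pi)$, which is where the $\LCD$ bound is operative.

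Next I would run the standard symmetrization: introducing an independent copy $\xi'$ and setting $\tilde\xi := \xi - \xi'$ gives $|\varphi_\xi(s)|^2 = \E\cos(s\tilde\xi) \le \exp(-\E[1-\cos(s\tilde\xi)])$, and the elementary estimate $1 - \cos(2\pi z) \ge 8\|z\|_{\R/\Z}^2$ together with independence of the $\xi_i$ yields
\begin{equation*}
|\varphi_Z(2\pi \tau)|^2 \;\le\; \exp\!\Big(-8\,\E_{\tilde\xi}\,\dist(\tau\tilde\xi\,\Bx,\,\Z^n)^2\Big).
\end{equation*}
The key step now invokes Definition \ref{def:LCD}: for $|u| < \LCD_{\kappa,\gamma}(\Bx)$ one has $\dist(u\Bx, \Z^n) \ge \min(\gamma|u|, \kappa)$. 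Since $\xi$ has variance one and is subgaussian, the symmetrization $\tilde\xi$ has variance two and is subgaussian, so there exist absolute constants $0 < c_1 < c_2$ and $c_3 > 0$ with $\P(c_1 \le |\tilde\xi| \le c_2) \ge c_3$. Restricting the outer expectation to this event (compatible with our range of $\tau$, since $c_2|\tau| \le \LCD_{\kappa,\gamma}(\Bx)$) then produces the Gaussian-type bound $|\varphi_Z(2\pi\tau)| \le \exp\!\big(-c\min(\gamma^2\tau^2, \kappa^2)\big)$.

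Finally I would split the Esseen integral at $\tau_0 := \kappa/(c_1\gamma)$: on $|\tau| \le \tau_0$ the bound $\exp(-c\gamma^2\tau^2)$ integrates to $O(1/\gamma)$ over all of $\R$, while on $\tau_0 \le |\tau| \le 1/(2\pi\varepsilon)$ the uniform bound $\exp(-\Theta(\kappa^2))$ contributes at most $(\pi\varepsilon)^{-1}\exp(-\Theta(\kappa^2))$. Multiplying by the outer $C\varepsilon$ produces $O(\varepsilon/\gamma + \exp(-\Theta(\kappa^2)))$, as claimed. The delicate point --- and what I expect to be the main obstacle --- is the averaging step: the $\LCD$ bound applies pointwise in its argument but must be used at the scaled point $\tau\tilde\xi$, so one must simultaneously ensure that $|\tilde\xi|$ is not so small that the deterministic lower bound $\gamma^2\tau^2\tilde\xi^2$ collapses, and not so large that $|\tau\tilde\xi|$ exceeds the $\LCD$. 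This is precisely where subgaussianity together with unit variance enter the hypothesis, and it is what fixes the implied constants' dependence on the law of $\xi$.
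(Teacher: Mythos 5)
This theorem is quoted in the paper directly from Rudelson--Vershynin \cite{RV-LO} with no accompanying proof, so there is no internal argument to compare yours against. Your proposal is a correct reconstruction of the standard RV route: reduce to a unit vector, apply Esseen, symmetrize to express the characteristic function modulus in terms of $\E\,\dist(\tau\tilde\xi\,\Bx,\Z^n)^2$, feed the $\LCD$ lower bound $\dist(u\Bx,\Z^n)\ge\min(\gamma|u|,\kappa)$ for $|u|<\LCD_{\kappa,\gamma}(\Bx)$ restricted to a favorable event $c_1\le|\tilde\xi|\le c_2$, and split the integral at $\tau_0=\kappa/(c_1\gamma)$.

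The one point worth tightening is the ``compatibility'' claim you flag yourself. From Esseen you control $|\tau|\le 1/(2\pi\varepsilon)\le\LCD_{\kappa,\gamma}(\Bx)/(2\pi)$, but the deterministic bound on $\dist(\tau\tilde\xi\,\Bx,\Z^n)$ requires $|\tau\tilde\xi|<\LCD_{\kappa,\gamma}(\Bx)$, i.e.\ $c_2<2\pi$ on the favorable event. Since $c_2$ is dictated by the law of $\xi$ (a Paley--Zygmund plus second-moment argument using $\Var(\tilde\xi)=2$ and subgaussianity of $\tilde\xi$), it need not be below $2\pi$. The fix is cheap and standard: use $\rho_\varepsilon(Z)\le\rho_{c_2\varepsilon}(Z)$ for $c_2\ge 1$ and apply Esseen to the latter, restricting to $|\tau|\le 1/(2\pi c_2\varepsilon)\le\LCD_{\kappa,\gamma}(\Bx)/(2\pi c_2)$, so that $|\tau\tilde\xi|<\LCD_{\kappa,\gamma}(\Bx)$ on the event. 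This costs only an extra factor $c_2=O_\xi(1)$, which is absorbed into the implied constants that the theorem allows to depend on $\xi$. With that adjustment your argument is complete.
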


Finally, we need the following important results, the proofs of which will be given for the reader's convenience. 

\begin{lemma}[Nets of structured vectors]\label{lemma:nets} Let $\gamma, \al, \kappa, D_{0}$ be given positive parameters, where $\gamma<1$, and $\kappa$ and $D_{0}$ are sufficiently large. The following holds for $m$ sufficiently large.
 \begin{itemize} 
 \item (Nets of vectors having $\LCD$ belonging to $(D_{0},2D_{0}]$, \cite[Lemma 4.7]{RV-rectangular}\cite[Lemma 11.3]{NTV}) Assume that $\kappa/D_{0} \le 1/2$. Let $S_{\al, D_{0}}$ be the set of $\Bx\in \R^{m}$ with $\|\Bx\| =\al$ and that 
$$\LCD_{\kappa,\gamma}(\Bx/\|\Bx\|_{2}) \in [D_{0}, 2D_{0}].$$ 
Then there exists a $(2 \al \kappa/D_0)$-net of $S_{\alpha, D_0}$ of cardinality at most $(C_0  D_0/\sqrt{m})^m$, where $C_0$ is an absolute constant. Consequently, there exists a $(2\kappa/D_0)$-net of $S_{\al,D_0}$ of cardinality at most $(C_0  (\al+1) D_0/\sqrt{m})^m$. 
\vskip .1in
\item (Nets of unit vectors having $\LCD$ smaller than $D$, \cite[Lemma 11.4]{NTV}) Assume that $2\kappa \le D_0 \le D$. Then the set $S_{1,D_0}$ has a $(2\kappa/D)$-net of cardinality at most $(C_0D/\sqrt{m})^m$ for some absolute constant  $C_0$.
\vskip .1in
\item (Trivial nets) The number of vectors in $(\frac{1}{D}\Z)^m$ that have norm bounded by $\al$ is at most $(C' \al D/\sqrt{m} +1 )^m$, and hence this set of vectors accepts an $\sqrt{m}/D$-net of size $(C' \al D/\sqrt{m} +1 )^m$.  
\end{itemize}
\end{lemma}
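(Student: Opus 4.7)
The plan is to derive all three nets from the same basic principle: a structured vector is a controlled perturbation of a scaled lattice point, so counting nets reduces to counting integer points in a Euclidean ball via a volume comparison.

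For the first part, I would start with $\Bx \in S_{\alpha, D_0}$ and unpack the LCD definition applied to $\Bx/\|\Bx\|_2$: there exist $\theta \in [D_0, 2D_0]$ and $\Bp \in \Z^m$ with $\|\theta \Bx/\alpha - \Bp\|_2 < \kappa$. Writing $\theta \Bx/\alpha = \Bp + \Bw$ with $\|\Bw\|_2 < \kappa$, taking norms gives $\|\Bp + \Bw\|_2 = \theta$, hence $\Bx = \alpha (\Bp + \Bw)/\|\Bp + \Bw\|_2$. I take the net point to be $\alpha \Bp/\|\Bp\|_2$ (noting $\Bp \neq 0$ since $\kappa \le D_0/2 < \theta$) and apply the standard normalization inequality $\|\Bu/\|\Bu\|_2 - \Bv/\|\Bv\|_2\|_2 \le 2\|\Bu - \Bv\|_2/\min(\|\Bu\|_2, \|\Bv\|_2)$ with $\min(\|\Bp\|_2, \theta) \ge D_0 - \kappa \ge D_0/2$, producing the required approximation at scale $O(\alpha \kappa/D_0)$ (constants absorbed). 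The number of admissible $\Bp$ is controlled by $\|\Bp\|_2 \le \theta + \kappa \le 3 D_0$, and a volume comparison against balls in $\R^m$ bounds $|\{\Bp \in \Z^m : \|\Bp\|_2 \le 3 D_0\}|$ by $(C_0 D_0/\sqrt m)^m$ for $m$ sufficiently large. The refinement from $2\alpha\kappa/D_0$ to $2\kappa/D_0$ just requires further rounding of each coordinate to an appropriate sublattice, which multiplies the count by $(\alpha + 1)^m$ in the worst case.

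For the second part, I would cover the unit vectors of LCD at most $D$ by a dyadic decomposition of the LCD range: partition $[D_0, D]$ into intervals $[2^j, 2^{j+1}]$ (with $j$ ranging over $O(\log(D/D_0))$ values), apply the first part to each $S_{1, 2^j}$ to produce a net at scale $O(\kappa/2^j)$, then refine each resulting net to the common scale $2\kappa/D$ by additional coordinate rounding. Each dyadic block contributes at most $(C_0' D/\sqrt m)^m$ points, and the $O(\log D)$ logarithmic factor in the number of blocks is absorbed into the constant $C_0$ since $m$ is large. Vectors with LCD below $D_0$ do not occur in this range by the assumption $2\kappa \le D_0$.

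The third part is a direct volume count: identifying $\Bx \in (\frac{1}{D}\Z)^m$ with $\Bp := D\Bx \in \Z^m$ constrained by $\|\Bp\|_2 \le \alpha D$, the number of such $\Bp$ is bounded above by the ratio of the volume of the Minkowski sum of the radius-$\alpha D$ ball with a unit cube (diameter $\sqrt m/2$) to the unit cell volume, yielding at most $(C'(\alpha D + \sqrt m)/\sqrt m)^m = (C' \alpha D/\sqrt m + 1)^m$. Rounding each coordinate of an arbitrary vector of norm at most $\alpha$ to the nearest multiple of $1/D$ changes it by at most $\sqrt m/(2D)$ in $\ell_2$-norm, so the same set serves as a $\sqrt m/D$-net of the Euclidean ball.

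The main obstacle is the approximation accounting in the first part, specifically tracking how the $\kappa$-error in the LCD witness propagates through the normalization step $(\Bp + \Bw)/\|\Bp + \Bw\|_2 \mapsto \Bp/\|\Bp\|_2$ without losing a factor that scales with $\alpha/D_0$ rather than $\kappa/D_0$; once the normalization inequality is set up this is routine, and the rest reduces to lattice point counting that is standard in the Rudelson--Vershynin framework.
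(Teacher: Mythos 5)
Your proof is correct and follows essentially the same route as the paper's for the first and third bullets: project the witness lattice point $\Bp$ onto the sphere of radius $\alpha$ to form the net point $\alpha\Bp/\|\Bp\|_2$, bound $\|\Bp\|_2\le 3D_0$, count lattice points in that ball by a volume comparison, and then refine the net by covering each large ball with smaller ones. For the second bullet your dyadic decomposition of the LCD range $[D_0,D]$ is a valid but slightly heavier alternative to the paper's argument: the paper applies the first bullet once to the single level set $S_{1,D_0}$, obtaining a $(2\kappa/D_0)$-net of size $(C_0D_0/\sqrt m)^m$, and then refines that net down to scale $2\kappa/D$ at a cost of an extra factor $(O(D/D_0))^m$, giving $(O(D/\sqrt m))^m$ directly. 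Your version actually proves the stronger union statement (all LCD levels between $D_0$ and $D$), which is what gets used downstream, and you correctly note that the $O(\log(D/D_0))$ factor from the dyadic layers is absorbed once $m$ is large. One small quantitative point: the normalization inequality you invoke yields a $4\alpha\kappa/D_0$-net rather than the paper's $2\alpha\kappa/D_0$; the paper gets the sharper constant by splitting the error into $\Bx-\alpha\Bp/D(\Bx)$ and $\alpha\Bp/D(\Bx)-\alpha\Bp/\|\Bp\|_2$, each of size at most $\alpha\kappa/D(\Bx)$, but as you say this constant is immaterial.
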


\begin{proof}[Proof of Lemma \ref{lemma:nets}]
Let us first focus on the net of $S_{\al, D_{0}}$. For $\Bx\in S_{\al, D_0}$ and $\|\Bx\|_{2}=\al$, denote
$$D(\Bx):= \LCD_{\kappa,\gamma}(\Bx/\al).$$
By definition, $D_0\le D(\Bx)\le 2D_0$ and there exists $p\in \Z^m$ with 
$$\left\|\frac{\Bx}{\al} -\frac{p}{D(\Bx)}\right\|_{2} \le \frac{\kappa}{D(\Bx)}.$$ 
This implies that $\|p\| \approx D(\Bx)$. More precisely, it implies
\begin{equation}\label{eqn:LCD:net:1}
1- \frac{\kappa}{D(\Bx)} \le \left\|\frac{p}{D(\Bx)}\right\|_{2} \le  1+ \frac{\kappa}{D(\Bx)}.
\end{equation}
This implies that 
\begin{equation}\label{eqn:LCD:net:2}
\|p\|_{2} \le 3 D(\Bx)/2 \le 3 D_0.
\end{equation}
It also follows from \eqref{eqn:LCD:net:1} that
\begin{equation}\label{eqn:LCD:net:3}
\left\|\Bx -\al \frac{p}{\|p\|}\right\|_{2} \le \al \left\|\frac{\Bx}{\al} - \frac{p}{D(\Bx)}\right\|_{2} + \al\left\| \frac{p}{\|p\|}(\frac{\|p\|}{D(\Bx)} -1)\right\|_{2} \le \frac{2\al \kappa}{D(\Bx)} \le \frac{2 \al \kappa}{D_0}.
\end{equation}
Now set 
$$\CN_0:=\left\{\al \frac{p}{\|p\|}, p\in \Z^m \cap B(0,3D_0)\right\}.$$
By \eqref{eqn:LCD:net:2} and \eqref{eqn:LCD:net:3}, $\CN_0$ is a $\frac{2 \al \kappa}{D_0}$-net for $S_{D_0}$. On the other hand, it is known that the size of $\CN_0$ is bounded by $(C_0\frac{ D_0}{\sqrt{m}})^m$ for some absolute constant  $C_0$.

For the second part of the first statement, it suffices to assume that $\al\ge 1$. As we can cover $S_{\al, D_{0}}$ by $(C_0D_0/\sqrt{m})^m$ balls of radius $2 \al \kappa/D_0$, we can then cover these balls by smaller balls of radius $2\kappa/D$, the number of such small balls is at most $(O(\al))^m$. Thus there are at most $(C_0\frac{ \al D_0}{\sqrt{m}})^m$ balls  of radius $2\kappa/D$ in total.

We can justify the second statement similarly. By the first part, one can cover $S_{1,D_0}$ by $(C_0D_0/\sqrt{m})^m$ balls of radius $2\kappa/D_0$. We then cover these balls by smaller balls of radius $2\kappa/D$; the number of such small balls is at most $(O(D/D_0))^m$. Thus, there are at most $(O(D/\sqrt{m}))^m$ balls in total.
\end{proof}

\section{Overcrowding Estimate: Proof of Theorem \ref{thm:lap_overcrowding}}\label{sect:overcrowding} 

We first introduce an important lemma (whose proof is delayed to the end of the section) to control the distance of a random vector with non-i.i.d.~entries to a subspace.
\begin{lemma}\label{lem:distance}
    Let $\Bv \in \R^n$ be a random vector whose entries are independent (not necessarily identically distributed) and $\Bv_i$ have mean zero, $\var(\Bv_i) \geq 1$ and $|\Bv_i| \leq T$ for some parameter $T$ with probability one.  Then, if $P_{ H^\perp}$ is a deterministic orthogonal projection in $\R^n$ onto a subspace of dimension $k$, there exist constants $C, c, c'$ depending on $p$ such that for any $0 < t < 1/2,$
    \[
    \sup_{\Bu \in \R^n}\P(\|P_{H^\perp} \Bv - \Bu\|_2 \leq t T \sqrt{k} - c' T) \leq C \exp(-c t^2k)
    \]
\end{lemma}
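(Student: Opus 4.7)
The plan is to apply Talagrand's convex concentration inequality to the function $W(\Bv) := \|P_{H^\perp}\Bv - \Bu\|_2$, combined with a second-moment lower bound that makes the bound on the median useful.

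First I would note that $W$ is convex in $\Bv$ (composition of the affine map $\Bv \mapsto P_{H^\perp}\Bv - \Bu$ with the Euclidean norm) and $1$-Lipschitz (since $\|P_{H^\perp}\|_{op} = 1$). Since the coordinates of $\Bv$ are independent and each supported in $[-T,T]$, Talagrand's convex concentration inequality yields
\[
\P\bigl(|W - M(W)| \ge s\bigr) \le C\exp\bigl(-c s^2/T^2\bigr)
\]
for every $s > 0$, where $M(W)$ denotes any median of $W$.

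Next, using $\E\Bv = 0$ and the independence of the coordinates, one computes
\[
\E W^2 \;=\; \E\|P_{H^\perp}\Bv\|_2^2 + \|\Bu\|_2^2 \;\ge\; \sum_{i=1}^n (P_{H^\perp})_{ii}\,\Var(\Bv_i) \;\ge\; \tr(P_{H^\perp}) \;=\; k,
\]
using $\Var(\Bv_i) \ge 1$ in the last step. Integrating the tail bound from Step~1 gives $\Var(W) = O(T^2)$, so $(\E W)^2 \ge k - O(T^2)$, whence $\E W \ge \sqrt{k} - O(T)$. Transferring mean to median via another application of the Talagrand bound costs at most $O(T)$, so $M(W) \ge \sqrt{k} - O(T)$ as well.

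To conclude, I would substitute $s = M(W) - (tT\sqrt{k} - c'T)$ into the concentration inequality. Choosing $c'$ as a sufficiently large constant (depending on the implicit constants above) absorbs all $O(T)$ corrections and ensures $s \ge tT\sqrt{k}$ throughout the range $t \in (0, 1/2)$ in the regime where the lemma is nontrivial (i.e.~when $\sqrt{k}$ dominates $T$; otherwise the threshold $tT\sqrt{k} - c'T$ is nonpositive and the statement is vacuous). Then
\[
\P\bigl(\|P_{H^\perp}\Bv - \Bu\|_2 \le tT\sqrt{k} - c'T\bigr) \;\le\; \P\bigl(W \le M(W) - s\bigr) \;\le\; C\exp(-c t^2 k),
\]
which is the desired bound. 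Crucially, the supremum over $\Bu$ is automatic because translating inside the norm changes neither the Lipschitz constant nor the convexity of $W$, and the lower bound $\E W^2 \ge k$ holds uniformly in $\Bu$. The main technical obstacle is the bookkeeping in the last step: the second moment gives a median of order $\sqrt{k}$, not $T\sqrt{k}$, so one must carefully verify that the additive term $-c'T$ in the threshold is the correct and sufficient correction to match the exponent $-ct^2k$ across all relevant regimes of $T$ versus $\sqrt{k}$.
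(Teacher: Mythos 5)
Your overall strategy matches the paper's --- Talagrand's convex concentration inequality applied to the $1$-Lipschitz, convex function $W(\Bv)=\|P_{H^\perp}\Bv-\Bu\|_2$, followed by a second-moment bound on the median --- and your idea of reading off $\Var(W)=O(T^2)$ directly by integrating the Talagrand tail is genuinely cleaner than the paper's computation of $\E Y^2$ for $Y=W^2-\E W^2$, since it makes the median bound uniform in $\Bu$ and dispenses with the paper's two-case analysis in $\|\Bu\|_2$. That is a real simplification worth keeping.

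However, the final step has a genuine gap. From $\Var(\Bv_i)\ge 1$ you can only conclude $\E W^2 \ge \sum_i p_{ii}\Var(\Bv_i)+\|\Bu\|_2^2 \ge k$, hence $M(W)\ge \sqrt{k}-O(T)$, whereas the threshold in the lemma is at the scale $tT\sqrt{k}$. Setting $s=M(W)-(tT\sqrt{k}-c'T)$ gives $s \ge \sqrt{k}(1-tT)+(c'-O(1))T$; the assertion that a large constant $c'$ ``ensures $s\ge tT\sqrt{k}$'' fails whenever $tT>1/2$, because then the dominant term $\sqrt{k}(1-2tT)$ is negative and no bounded $c'$ can absorb it for large $k$. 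Since the lemma is claimed for all $t<1/2$ and $T\ge 1$ is forced by the hypotheses (bounded support plus unit variance), this is exactly the regime where the argument is supposed to bite. To close the gap one needs $M(W)\gtrsim T\sqrt{k}$, which requires $\E\Bv_i^2\gtrsim T^2$ rather than $\ge 1$ --- precisely the step the paper takes implicitly when it writes $\sum_i\E\Bx_i^2 p_{ii}\ge T^2 k$ (which, together with $|\Bx_i|\le T$ forcing $\E\Bx_i^2\le T^2$, amounts to $\Bx_i\in\{\pm T\}$). In other words, you followed the hypothesis $\Var(\Bv_i)\ge 1$ as literally stated, but that hypothesis is too weak to yield the stated conclusion; with the effective hypothesis $\Var(\Bv_i)=\Theta(T^2)$ that the paper's proof actually uses, your simplified argument would go through and would still avoid the case split on $\|\Bu\|_2$.
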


\begin{proof}[Proof of Theorem \ref{thm:lap_overcrowding}]
    We follow the proof strategy in \cite{NgJFA}. Observe that as $L_n+F$ is symmetric, $\sigma_{n-k+1}(L_n+F) \leq \frac{c k}{\sqrt{n}}$ is equivalent to the event that there exists an $i$ such that
    \[
    \frac{-ck}{\sqrt{n}} \leq \lambda_i \leq \lambda_{i-k+1} \leq \frac{c k}{\sqrt{n}}.
    \]
    We let $I := [-ck/\sqrt{n}, ck/\sqrt{n}]$.  Let us assume that $\lambda_i \in I$ and $\Bv_i$ is such that $(L_n+F) \Bv_i = \lambda_i \Bv_i$.  As $L_n+F$ is symmetric, the eigenvectors $v_i$ are orthogonal, so that
    \begin{equation} \label{eq:smallnorm}
        \left\|(L_n+F) \Bv_i \right\|_2 \leq \frac{ck}{\sqrt{n}}.
    \end{equation}

    Write $\Bv_j = (v_{j1}, \ldots, v_{jn})^{T}$, and let \(\Bc_i\) denote the \(i^{\text{th}}\) column of \(L_n+F\). Let  \(V\) be the  \(k\times n\) matrix formed by the row vectors \(\Bv_{i}^{T}\) and let $\Bw_1, \dots, \Bw_n$ be its columns.  Clearly, \eqref{eq:smallnorm} is equivalent to
    \[
    \left\| \sum_{j=1}^n v_{ij} \Bc_j \right\|_2 \leq \frac{ck}{\sqrt{n}} \text{\quad for all $1 \leq i \leq k$}.
    \]
    For $J \subset [n]$, we use $V_J$ to indicate the matrix formed by the columns $\Bw_j$ with $j \in J$. Observe that by construction,
    \[
    \sum_{1 \leq j \leq n} \|\Bw_j\|_2^2 = k.
    \]
    We now show that there exists a well-conditioned minor (submatrix) of $V$.  By Lemma \ref{lem:lowerbound}, for any $1 \leq l \leq k-1$, there exist distinct indices $i_1, \dots, i_l$ such that 
    \begin{equation} \label{eq:well-invert}
    \sigma_l(Z_{i_1, \dots, i_l} )\geq c \max_{r \in \{l+1, \cdots, k\}} \sqrt{\frac{(r-l) \sum_{i=r}^n \sigma_i(V)^2}{nr}} \geq c' \sqrt{\frac{(k-l)^2}{nk}} 
    \end{equation}
    for some constant $c'$, where the final inequality follows from setting $r = \lceil(l+k)/2 \rceil$ and noting that $\sigma_j(V) = 1$ for $j \in [k]$.
    
    For notational convenience, we let \(Y\) be the \(l \times k\) matrix \(Z_{(i_1, \ldots, i_l)}^{T}\) and $X$ be a right inverse of $Y$ so that $YX = I_l$.
    We define
    \begin{equation} \label{eq:partition}
        A = (\Bc_{i_1}, \ldots, \Bc_{i_l})Y + (\Bc_{i_{l+1}}, \ldots, \Bc_{i_n})Y',
    \end{equation}
    where \(Y' = Z_{(i_{l+1}, \ldots, i_n)}^{T}\).  
    Multiplying \eqref{eq:partition} on the right by $X$ yields
    \[
    AX = (\Bc_{i_1}, \ldots, \Bc_{i_l}) + (\Bc_{i_{l+1}}, \ldots, \Bc_{i_n})Y'X.
    \]
    Recall that by assumption, each column of $A$ has norm bounded by $ck/\sqrt{n}$ and by \eqref{eq:well-invert}, 
    \[
    \|X\| \ll \sqrt{\frac{kn}{(k-l)^2}}.
    \]
    Thus,
    \begin{equation} \label{eq:HS}
    \|AX\|_{HS} \leq \|A\|_{HS} \|X\| \ll \frac{k^2}{(k-l)}.
    \end{equation}

    Let \(H\) denote the subspace spanned by \((\Bc_{i_{l+1}}, \ldots, \Bc_{i_n})\). Equation \eqref{eq:HS} implies that
    \begin{align*}
        \dist(\Bc_{i_1}, H)^2 + \ldots + \dist(\Bc_{i_l}, H)^2 \ll \frac{k^4}{(k-l)^2 }.
    \end{align*}
   Thus, we have reduced the event in \eqref{eq:smallnorm} to the event that
   \[
   \dist(\Bc_{i_1}, H) = O\left(\frac{k^2}{(k-l) }\right) \wedge \cdots \wedge \dist(\Bc_{i_l}, H) = O\left(\frac{k^2}{(k-l) } \right),
   \]
   which we denote by $\mathcal{E}_{i_1, \dots, i_l}$. However, due to the symmetry and structure of the Laplacian, these subevents are related in a complicated way. 
   To begin decoupling the events, we make the following observation.  For any $I \subset [n]$,
   \begin{equation} \label{eq:projection}
   \dist(\Bc_i, H) \geq \dist(\Bc_{i,I}, H_I),
   \end{equation}
   where $c_{i,I}$ and $H_{I}$ are the projections of $\Bc_{i}$ and $H$ onto the coordinates indexed by $I$.
   Without loss of generality, we assume that \((i_1, \ldots, i_l) = (1, \ldots, l)\).
   We utilize \eqref{eq:projection} to note that $\mathcal{E}_{1,2, \dots, l}$ implies the event
   \begin{align*}
    \mathcal{F}_{1,\dots,l} &:= \left( \dist(\Bc_{1, \{l,\dots, n\}}, H_{k, \dots, n} = O\left(\frac{k^2}{(k-l) } \right)\right) \wedge \dots \wedge \left(\dist(\Bc_{l, \{l,\dots, n\}}, H_{k, \dots, n} = O\left(\frac{k^2}{(k-l) } \right)\right)
   \end{align*}
    Note that now we have that $\Bc_{i, \{l,\dots, n\}}$ is independent of $\Bc_{j, \{l,\dots, n\}}$
    for $i,j \leq l$ and $i \neq j$.  However, due to the structure of the Laplacian,
    $\Bc_{i, \{l,\dots, n\}}$ is not independent of $H_{l, \dots, n}$.
    
    Now, we set $l = k/2$ and we introduce an integer parameter $\tau$, which will be a sufficiently large constant depending on $c$ and $p$, and divide our indices $1, \dots, l$ into $l/2\tau$ sets,
    \[
    J_1 = \{1, \dots, 2\tau\}, J_2 = \{2\tau + 1, 4 \tau\}, \dots, J_{l/2\tau} = \{l - 2\tau+1, \dots, l\}.
    \]
    As $\Bc_i$ is a column of $L_n+F$,
    \[
    \Bc_{i, \{l,\dots, n\}} = \Bd_{i, \{l,\dots, n\}} + \Bf_{1, \{l,\dots, n\}}.
    \]
    We define two vectors for every $i \in [l/\tau]$,  
    \[
    \Bc_i^* = \sum_{j= (i-1)\tau+1}^{i \tau/2} \Bc_{j} = \sum_{j= (i-1)\tau+1}^{i \tau/2} \Bd_{j} + \Bf_{j} := \Bd_i^* + \Bf_i^*
    \]
    and
    \[
    \Bc_i^{**} = \sum_{j = i \tau/2 + 1}^{i \tau} \Bc_j = \Bd_i^{**} + \Bf_i^{**}.
    \]
    Every coordinate of $\bar{\Bd}_{i,\{l,\dots, n\}} := \Bd_{i,\{l, \dots, n\}}^* + \Bd_{i,\{l, \dots, n\}}^{**} $ is an i.i.d.~binomial random variable. For a random variable $X \sim B(2\tau,p)$, by Chernoff's bound, for any $0 < \delta < 1$, 
    \[
    \P(|X - 2 \tau p| \geq \delta 2 \tau p ) \leq 2 \exp(-2 \delta^2 \tau p/3).
    \]
    If we set $\delta = 1/2$ and $\tau = K \log n/k$ for a sufficiently large constant $K$ then 
    \[
    \P(|X - 2 \tau p| < \delta 2 \tau p ) \geq (1 - k/5 n).
    \]
    
    Let $D_i := \{j \in \{l, \dots, n\}: (\bar{\Bd}_{i,\{l,\dots, n\}})_j \in [(1-\delta) 2 \tau p, (1+\delta) 2 \tau p] \}$.
    As the entries are independent, by Chernoff's bound again,
    \begin{equation} \label{eq:goodindex}
    \P(|D_i| \leq (1-k/5 n)(n-l)) \leq \exp(-kp/10 ).
    \end{equation}
    Now, for $i \in [l/2 \tau]$, we introduce the following auxiliary randomness inspired again by the ``switching'' method.  We define a new random variable $\hat{\Bc}^*_i$ where $(\hat{\Bc}_i^*)_j = (\Bc^*_i)_j$ for $j \notin D_i$, and for $j \in D_i$,
    \[
    (\hat{\Bc}^*_i)_j = \begin{cases}
        & (\hat{\Bc}_i^*)_j \text{ with probability } 1/2 \\
        & (\Bd_i^{**})_j + \Bf_i^* \text{ with probability } 1/2.
    \end{cases}
    \]
    Similarly, we define a new random variable $\hat{\Bc}^{**}_i$ where $(\hat{\Bc}_i^{**})_j = (\Bc^{**}_i)_j$ for $j \notin D_i$, and for $j \in D_i$,
    \[
    (\hat{\Bc}^{**}_i)_j = \begin{cases}
        & (\hat{\Bc}_i^{**})_j \text{ with probability } 1/2 \\
        & (\Bd_i^{*})_j + \Bf_i^{**} \text{ with probability } 1/2.
    \end{cases}
    \]
    We observe that $\hat{\Bc}_{i, \{l,\dots, n\}}^*, H_{l,\dots,n}$ has the same joint distribution as $\Bc_{i, \{l,\dots, n\}}^*, H_{l,\dots,n}$. Furthermore, the key benefit of this construction is that, even upon conditioning on the outcome of $H_{l,\dots, n}$, the random vectors $\Bc_i$, and $\Bc_i^*$, $\hat{\Bc}_i^*$ are sufficiently random with independent entries for $j \in I_i$. 

    Next, we note that by the triangle inequality, $\mathcal{F}_{1, \dots, l}$ implies the event
    \[
    \mathcal{G}_{1,\dots, l} := \left( \dist(\hat{\Bc}^*_{1, \{l,\dots, n\}}, H_{l, \dots, n} )= O\left(\frac{\tau k^2}{(k-l) } \right)\right) \wedge \dots \wedge \left(\dist(\hat{\Bc}^*_{l/\tau, \{l,\dots, n\}}, H_{l, \dots, n}) = O\left(\frac{\tau k^2}{(k-l) } \right)\right).
    \]
    These subevents are now independent upon conditioning on the initial randomness of $L_n$.
    Now we introduce a parameter $\alpha \ll n$ and divide $J_1, \dots, J_{l/2\tau}$ into $l/2 \tau \alpha$ groups of size $\alpha$.  We call an index $i \in [l/2 \tau \alpha]$ \emph{good} if $|D_j| > (1 - c/10)(n-l)$ for at least one $J_j$ for $j \in [(j-1) l/2 \tau \alpha+ 1, jl/2 \tau \alpha]$.    
    We define the event $\mathcal{E}$ to be the event that every index in $[l/2 \tau \alpha]$ is good.  By $\eqref{eq:goodindex}$ and independence, the probability that $i \in [l/2 \tau \alpha]$ is not good is at most $\exp(-kp \alpha/10)$ so by a union bound, 
    \begin{equation} \label{eq:Eprob}
    \P(\mathcal{E}^c) \leq (l/2 \tau \alpha) \exp(-kp \alpha/10) \leq \exp(-k p  \alpha/20).
    \end{equation}

    On the event $\mathcal{E}$, we have at least $l/2 \tau \alpha$ good indices.  For any good index $i$, by \eqref{eq:projection}:
    \begin{align*}
    \P\left(\dist(\hat{\Bc}^*_{i, D_i}, H_{l, \dots, n} ) \leq O\left(\frac{\tau k^2}{(k-l) } \right) \right) &\leq \P\left(\dist(\hat{\Bc}^*_{i, D_{i}}, H_{D_{i}} )= O\left(\frac{\tau k^2}{(k-l) } \right) \right),
    &\leq C \exp(-\Theta(k))
    \end{align*}
    where the last inequality follows from Lemma \ref{lem:distance} and $k$ sufficiently large.  Note that our choice of $l$ and the size of $|D_i|$ guarantee that the corank of $H_{D_i}$ is at least $k/3$.  Therefore, by independence of the randomness on the good indices, we have that
    \begin{align*}
    \P(\mathcal{E}_{1, \dots, l}) &\leq \P(\mathcal{E}_{1, \dots, l}|\mathcal{E}) + \P(\mathcal{E}^c) \\
    &\leq (C \exp(-\Theta(k)))^{l/2\tau \alpha} +  \exp(-k p \alpha/20) \leq C \exp(-\Theta(k^2/ \alpha \log(n/k))) +  \exp(-k \alpha/20).
    \end{align*}
	Finally, we set $\alpha = \sqrt{k/\log n}$.
\end{proof}

It remains to prove the distance result. 

\begin{proof}[Proof of Lemma \ref{lem:distance}]
    We show that $f(\Bx) = \|P_{H^\perp} \Bx - \Bu\|_2$ is convex and 1-Lipschitz.  For convexity, we observe that for $0 \leq s \leq 1$, 
    \begin{align*}
    \|P_{H^\perp}  \Bx - \Bu\|_2 &= \|P_{H^\perp}  s \Bx - s \Bu + P_{H^\perp} (1-s) \Bx - (1-s) \Bu\|_2 \\
    &\leq s \|P_{H^\perp}   \Bx -  \Bu\| + (1-s)\| P_{H^\perp} \Bx - \Bu\|_2.
    \end{align*}
    Next, note that for $\Bx, \By \in \R^n$,
    \begin{align*}
        \Big|\|P_{H^\perp}  \Bx - \Bu\|_2 - \|P_{H^\perp}  \By - \Bu\|_2 \Big| &\leq \|P_{H^\perp}  (\Bx - \By)\|_2 \\
        &\leq \| \Bx - \By\|_2,
    \end{align*}
    so that $f(\Bx)$ is 1-Lipschitz.
    Thus, by Talagrand's inequality, for absolute constants $K, \kappa > 0$ and any $\lambda > 0$,
    \begin{equation} \label{eq:talagrand}
    \P(|f(\Bx) - M(f(\Bx)| \geq \lambda T) \leq K \exp(-\kappa \lambda^2),
    \end{equation}
    where $M(f(\Bx))$ is a median of $f(\Bx)$.
    We use a second moment argument to control the median.  We first calculate that
    \[
    \E \|P_{H^\perp}  \Bx -\Bu\|_2^2  = \E \Bx^T P_{H^\perp} \Bx - 2 \E \Bx^T P_{H^\perp} \Bu + \|\Bu\|^2  = \sum_{i=1}^n \E \Bx_i^2 p_{ii} + \|\Bu\|_2^2
    \]
    where $p_{ij} := (P_{H^{\perp}})_{ij}$.
    \begin{align*}
    Y &= \|P_{H^\perp}  \Bx -\Bu\|_2^2 - \E \|P_{H^\perp}  \Bx -\Bu\|_2^2 \\
    &= \Bx^T P_{H^\perp} \Bx - 2  \Bx^T P_{H^\perp} \Bu + \|\Bu\|^2 -  \sum_{i=1}^n p_{ii} \E x_i^2 - \|\Bu\|_2^2 \\
    &= \sum_{ij} p_{ij} (x_i x_j - \delta_{ij}) -  2 \sum_{i=1}^n x_i (P_{H^\perp} \Bu)_i.
    \end{align*}
    Therefore,
    \begin{align*}
    \E Y^2 &= \E \left( \sum_{ij} p_{ij} (x_i x_j - \delta_{ij}) \right)^2 - 4 \E \left(\sum_{i=1}^n x_i (P_{H^\perp} \Bu)_i \right) \left( \sum_{ij} p_{ij} (x_i x_j - \delta_{ij}) \right) + 4 \E \left(\sum_{i=1}^n x_i (P_{H^\perp} \Bu)_{i} \right)^2 \\
     &\leq O\left( T^4 \sum_{ij} p_{ij}^2  + T^4 \sqrt{\left(\sum_i (P_{H^\perp}  \Bu)_i^2 \right) \left(\sum_{ij} p_{ij}^2  \right)}  \right) \\
    &= \left( T^4 k + T^2 \|\Bu\|_2^2 k \right),
    \end{align*}
    where we have invoked the fact that $\sum_{ij} p_{ij}^2 = \sum_{i} p_{ii} = k$. 
    
     Since we make no assumptions on $\|\Bu\|_2$, we need to divide into two cases.

     {\bf Case 1:} $(1 - 2t) T \sqrt{\sum_{i=1}^n \E \Bx_i^2 p_{ii}} \leq \|\Bu\|_2 \leq (1 + 2t) T \sqrt{\sum_{i=1}^n \E \Bx_i^2 p_{ii}}$. In this case, we have that
     \[
    \E Y^2 = O(T^4 k).
     \]
     By Markov's inequality, the median of $|Y|$ is $O(T^2 \sqrt{k})$, which implies that the median of $\|P_{H^\perp}  \Bx -\Bu\|_2^2$ is at least $\sum_{i=1}^n \E \Bx_i^2 p_{ii} + \|u\|_2^2 - O(T^2 \sqrt{k}) \geq T^2 k + \|\Bu\|_2^2 - O(T^2 \sqrt{k})$.  We can therefore deduce that the median of $\|P_{H^\perp}  \Bx -\Bu\|_2$ is at least $\sqrt{T^2 k + \|\Bu\|_2^2  - O(T^2 \sqrt{k})} = \sqrt{T^2 k + \|\Bu\|_2^2} - O(T^2)$. Returning to Talagrand's inequality, \eqref{eq:talagrand}, it follows that
     \begin{align*}
         \P(\|P_{H^\perp}  \Bx -\Bu\|_2 \leq t T \sqrt{k} - O(T^2) ) &\leq \P(\|P_{H^\perp}  \Bx -\Bu\|_2 \leq \sqrt{T^2 k + \|\Bu\|_2^2} t T \sqrt{k} - O(T^2) - T t \sqrt{k} ) \\
         &\leq \P(|\|P_{H^\perp}  \Bx -\Bu\|_2 - M(\|P_{H^\perp}  \Bx -\Bu\|_2) | \geq T t \sqrt{k}) \\
         &\leq C \exp(-c't^2 k).
     \end{align*}

    {\bf Case 2.} Now we consider the case where $\|\Bu\|_2 \leq (1 - 2t) T \sum_{i=1}^n \E \Bx_i^2 p_{ii}$ or $\|\Bu\|_2 \geq (1+ 2t) T \sum_{i=1}^n \E \Bx_i^2 p_{ii}$. If $\|P_{H^\perp}  \Bx - \Bu\|_2 \leq t \sum_{i=1}^n \E \Bx_i^2 p_{ii}$, then by the triangle inequality, either $\|P_{H^\perp}  \Bx \|_2 \leq (1 - t) T \sum_{i=1}^n \E \Bx_i^2 p_{ii}$ or $\|P_{H^\perp}  \Bx \|_2 \geq (1 + t) T \sum_{i=1}^n \E \Bx_i^2 p_{ii}$ which reduces to Case 1.    
\end{proof}

\section{No-Gaps Delocalization: Proof of Theorem \ref{thm:non-gap:lap}  for the Approximate Eigenvectors} \label{sec:nogaps}

First, recall the Erd\H{o}s-Littlewood-Offord result from \cite{NgW2}.

\begin{theorem}\label{theorem:LO} Let $\Bw =(w_1,\dots, w_N) \in \R^N$. 
\begin{itemize}
\item Assume that at least $N'$ elements of the $w_i$ satisfy $|w_i| \ge r$. Let $X=(x_1,\dots, x_N)$ where $x_i$ are i.i.d.~copies of a random variable $\xi$ of mean zero, variance one, and bounded $(2+\eps)$-moment. Then
$$\rho_r(\Bw) := \sup_{a\in \R} \P(|X \cdot \Bw -a| < r)=O(\frac{1}{\sqrt{N'}}).$$
Consequently, if \begin{align}
    \text{for every } I \subset [N] \text{ with } |I| \ge N-N' \text{ we have } \|\Bw_I\|_2^2 \ge |I| r^2,
\end{align} then the above holds. 
\vskip .1in
\item Furthermore, if we assume that for all $w \in \R$, the vector $\Bw-w\1$ has at least $N'$ components that have absolute values at least $r$. 
Then for $X=(x_1,\dots, x_N)$ with $x_i, $ for $1\le i\le N$, being i.i.d.~copies of $\xi$ we have the following affine analog
$$\rho_{L,r}(\Bw):= \sup_{a,w\in \R} \P(|X \cdot (\Bw-w\1)- a| < r) = O(\frac{1}{\sqrt{N'}}).$$
As a consequence, for any $R>r$
\begin{align}\label{eqn:ELO}
    \rho_{L,R}(\Bw)= \sup_{a,w\in \R} \P(|X \cdot (\Bw-w\1)- a| < R) =O(\frac{R}{r\sqrt{N'}}).
\end{align} 
Also, if for any $w$
\begin{align}\label{eqn:pig}
    \|(\Bw-w\1)_{I}\|_2^2 \ge |I| r^2 \text{ for any } I \text{ such that } |I| \ge N-N',
\end{align} then the above conclusion holds.
\end{itemize}
Here the implied constants depend on $\xi$.  
\end{theorem}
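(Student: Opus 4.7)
The plan is to prove the non-affine first part via a standard Erd\H{o}s--Littlewood--Offord argument through Esseen's concentration inequality, and then deduce the affine version and its extensions as corollaries. For the first part, I would apply Esseen's lemma to $S=X\cdot\Bw$ to obtain
\begin{equation*}
\sup_a \P(|S-a|\le r)\;\lesssim\; r\int_{-1/r}^{1/r}\prod_{j=1}^N |\phi_\xi(tw_j)|\,dt,
\end{equation*}
and then rescale so that $r=1$ and $|w_j|\ge 1$ on the $N'$ distinguished indices. Under the mean-zero, variance-one, bounded $(2+\eps)$-moment hypothesis on $\xi$, a Taylor expansion yields the local bound $|\phi_\xi(s)|\le\exp(-cs^2)$ for $|s|\le s_0$, while non-degeneracy of $\xi$ gives $|\phi_\xi(s)|\le 1-\eta$ uniformly on any bounded set away from the origin. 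A dyadic pigeonhole on the magnitudes $|w_j|$ selects $\Omega(N')$ indices on a common scale, on which $|tw_j|$ sits in a controlled window throughout $|t|\le 1$; the product then dominates $\exp(-ct^2 N')$, whose integral produces the factor $O(1/\sqrt{N'})$. The $\ell^2$ formulation reduces to the coordinate-count hypothesis by a one-line pigeonhole: if fewer than $N'$ indices satisfied $|w_j|\ge r$, then the complementary set $I$ would have $|I|\ge N-N'$ and $\|\Bw_I\|_2^2<|I|r^2$, contradicting the assumption.

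For the affine statement, observe that for each fixed $w\in\R$ the vector $\Bw-w\1$ satisfies the assumption of the first part by hypothesis, so
\begin{equation*}
\sup_a \P(|X\cdot(\Bw-w\1)-a|<r)=O(1/\sqrt{N'})
\end{equation*}
with an implied constant independent of $w$; taking the supremum over $w$ as well yields $\rho_{L,r}(\Bw)=O(1/\sqrt{N'})$. The $\ell^2$ form \eqref{eqn:pig} transfers verbatim via the same pigeonhole applied to each fixed $w$. For \eqref{eqn:ELO}, I would cover the interval $[a-R,a+R]$ by $\lceil 2R/r\rceil$ sub-intervals of length $2r$, bound each by the affine estimate, and sum, picking up the factor $R/r$.

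The main obstacle, and essentially the only substantive step, is the Fourier estimate for $\xi$ under merely a $(2+\eps)$-moment assumption: the local quadratic bound near zero is immediate from the variance, but uniform control across the full integration range is where dyadic bucketing of the coefficients $|w_j|$ and the non-degeneracy of $\xi$ are needed. Everything else is a short reduction (affine case to fixed-$w$ case), a covering (radius $R$ to radius $r$), or a pigeonhole ($\ell^2$ hypothesis to coordinate-count hypothesis), and so presents no further difficulty.
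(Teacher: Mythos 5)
The paper does not actually prove this theorem; it is quoted verbatim from reference \cite{NgW2} (``recall the Erd\H{o}s--Littlewood--Offord result from \cite{NgW2}''), so there is no in-text proof against which to compare. I therefore assess your argument on its own terms.

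Your reductions are all fine: passing from the $\ell^2$ condition to the coordinate-count condition is a correct one-line pigeonhole (note $|I|\ge N-N'+1\ge N-N'$ when fewer than $N'$ coordinates are large); the affine bound $\rho_{L,r}$ follows by applying the non-affine bound for each fixed $w$ with a $w$-independent constant and then taking $\sup_w$; and the passage from radius $r$ to radius $R$ by covering $[a-R,a+R]$ with $O(R/r)$ intervals gives the extra factor $R/r$ in \eqref{eqn:ELO}. These are exactly the expected short arguments.

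The gap is in the base case. Your Esseen step is correct up to
\[
\sup_a \P(|S-a|\le r)\lesssim r\int_{|t|\le 1/r}\prod_{j}|\phi_\xi(tw_j)|\,dt,
\]
but the dyadic pigeonhole claim is wrong as stated: with no a priori upper bound on the magnitudes $|w_j|$, a dyadic bucketing yields only $\Omega(N'/\log M)$ indices at a common scale, where $M=\max_j|w_j|$ is unbounded, not $\Omega(N')$. Moreover, even if $\Omega(N')$ coefficients landed in a common dyadic bucket $[2^k,2^{k+1})$, the pointwise domination $\prod_j|\phi_\xi(tw_j)|\le\exp(-ct^2N')$ is false once $2^k$ is large and $\xi$ is lattice: for $\xi$ uniform on $\{\pm1\}$ and $w_j=2\pi$ for all $j$, one has $\phi_\xi(tw_j)=\cos(2\pi t)$, which equals $1$ at $t=1$, so the product is identically $1$ there. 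The integral $\int_{-1}^{1}|\cos(2\pi t)|^{N'}\,dt$ is in fact $O(1/\sqrt{N'})$, but only after an averaging over the dips of $|\cos|$; this is precisely the step your sketch elides, and it is the whole content of the theorem. The clean route is the Kolmogorov--Rogozin concentration inequality: conditioning out the $N-N'$ small coordinates, one applies
\[
Q\Big(\sum_{j\text{ good}}x_jw_j,\,r\Big)\le \frac{Cr}{\big(\sum_{j\text{ good}}r^2\big(1-Q(x_jw_j,r)\big)\big)^{1/2}},
\]
and uses that $|w_j|\ge r$ forces $Q(x_jw_j,r)=Q(\xi,r/|w_j|)\le Q(\xi,1)=1-\eta_\xi$ for some $\eta_\xi>0$ (which exists because a variance-one random variable cannot be supported on an interval of length one), giving $O(1/\sqrt{N'\eta_\xi})$. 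If you want to stay within a bare Esseen framework, you must symmetrize, bound $|\phi_\xi(s)|^2=\E\cos(s\tilde\xi)$ by $\exp(-\E\min(c\,s^2\tilde\xi^2,\,1))$ and handle the periodicity of the resulting integrand carefully; the dyadic pigeonhole does not do this work for you.
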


Note that for Theorem \ref{thm:non-gap:lap} it suffices to assume 
\begin{equation}\label{eqn:lambda}
\frac{1} {\log \log n}\le \lambda_n < \la_{0}, \end{equation}
where $\la_{0}$ is a sufficiently small positive constant \footnote{This is because the smaller $\lambda_{n}$ is, the stronger our statements become. We postulated this condition on $\lambda_{n}$ primarily due to the union bound in Case 1 below. However, further investigation shows that the events considered in that case for larger $\lambda_{n}$ are, in fact, subevents of those corresponding to smaller $\lambda_{n}$.}.

It what follows, we will mainly work with $\wb{L}_{n}$, the proof for $\wb{L}'_{n}$ (or for $\wb{L}'_{n-1}$) is identical.

\subsection{Decomposition of vectors} Let
\begin{equation}\label{eqn:delta_n}
\delta_n := (\log n)^{-\frac{A_0}{\lambda_n}}, \mbox{ where $A_0$ to be chosen sufficiently large.}
\end{equation}

First, we let $I$ be the set of first $ \left\lceil \la_n n\right\rceil$ indices (in the end we will take union bound over $I$). We will bound the probability of the following event $\mathcal{E}$:
there exists a $\Bv \in (\frac{1}{D}\Z)^n$ satisfying \eqref{eqn:Lv} whose first $\lceil \lambda_n n \rceil$ coordinates are similar in size (in the $l_2$-norm). More precisely, for some $a$
$$\|\Bv_I -a \1/\sqrt{n}\|_2 \le \delta_n$$ 
and 
$$\|(\wb{L}_{n}-\la_0)\Bv\|_2 =O(\sqrt{n} \log n \times n^{-C}).$$

Let 
$$k:=k_n=\lfloor \lambda_n n/4\rfloor.$$ 
We partition the index set $[\lceil \lambda_n n \rceil+1,n]$ into subsequences $J_1,\dots,J_\ell$ of consecutive numbers so that $|J_i|=k$ for $i<\ell$ and $k\leq |J_\ell| <2k$. 

We have
$$k\geq \lambda_n n/8 \quad \textrm{and} \quad \ell<8 \la_n^{-1}.$$

We let $t_{0}>1$ be a parameter whose value is dictated by constraints later in the argument. For instance we can choose 
\begin{equation}\label{eqn:t_0}
t_0 = \log^{64} n.
\end{equation}
Also, let 
\begin{equation}\label{eqn:a_n}
\al_n = \log^{-2}n.
\end{equation}

Given $\Bv$,  we will partition $[n]$ into two subsets, $I_m$ (mixed) and $I_s$ (sparse), where $I_s$ is the union of the intervals $J_{i_{0}}=I, J_{{i_{1}}},\dots $, where the \( i_{k} \) above are defined as follows. At each step $j \ge 0$, find a $\Bv_{J_i}$ that has not been selected previously such that there exists $a_i \in \Z/D$ and a subset $J_i' \subset J_i$ of size at least $|J_i|- \al_n \la_n n$ which satisfies
\begin{equation}\label{eqn:dis}
  \|\Bv_{J_i'} -(a_i \1/\sqrt{n})_{J_i'}\|_2 \le \delta_n t_{0}^{j} .  
\end{equation}

Denote that \( i \) as \( i_{j} \). This is possible, up to some maximal index \( i_{0} \). Hence $I_s$ is associated with a maximum index $0\le i_0 \le 8 \la_n^{-1}$ and a partial ordering of $\{1,\dots, \lfloor \la_n^{-1} \rfloor\}$, where the $J_i$ appeared in each step $i$.

We first pause for a few remarks.

\begin{remark}\label{remark:L:sparse:rho} Each $\Bv$ will also gives rise to a parameter $0\le i_{0} \le 8 \la_{n}^{-1}$ such that all of the intervals from $I_{s}$ are $\delta_{n} t_{0}^{{i_{0}}}$-close to some $a_i \1/\sqrt{n}$. Conversely, if $J_i$ does not belong to $I_s$ (i.e., if \( J_{i} \subset I_{m}\)), then by definition, for any $a\in \Z/D$ and any $J_i'\subset J_i$ such that $|J_i'| \ge |J_i| -  \al_n \la_n n$ we have 
$$\|\Bv_{J_i'} -(a \1/\sqrt{n})_{J_i'}\|_2 > \delta_n t_{0}^{i_{0}+1}.$$ 
This implies that for any $a$, there are at least $ \al_n \la_n n$ entries $v_i\in \Bv_{J_i}$ such that
$$|v_i - a/\sqrt{n}| \ge \frac{\delta_n t_{0}^{i_{0}+1}}{\sqrt{|J_i|}} =  \frac{\delta_n t_{0}^{i_{0}+1} \la_n^{-1/2}}{\sqrt{n}}.$$ 
In particular, Condition \ref{eqn:pig} of Theorem \ref{theorem:LO} is satisfied, and so
\begin{equation}\label{eqn:L:rho(J_i)}
\rho_{L,   \frac{\delta_n t_{0}^{i_{0}+1} \la_n^{-1/2}}{\sqrt{n}}}(\Bv_{J_i}) =O\left(\frac{1}{\sqrt{ \al_n \la_n n}}\right).
\end{equation}
\end{remark}
For convenience, set
\begin{align}\label{eqn:r_n}
    r_n :=   \frac{\delta_n t_{0}^{i_{0}+1/2} \la_n^{-1/2}}{\sqrt{n}},
\end{align}
with an extra factor of $t_0^{1/2}$ to be used later.

\begin{remark}\label{remark:L:sparse:trivialnet} Note that for each $i \le i_{0}$,
$$ t_{0}^i \delta_n \le t_{0}^{i_0} \delta_n < r_n \sqrt{|J_i|}.$$
So, for each fixed $a$, the set $\{\Bv_{J_i} \in \R^{|J_i|}, \|\Bv_{J_i} - a \1/\sqrt{n}\|_2 \le t_{0}^i \delta_n\}$ trivially accepts an $r_n \sqrt{J_i}$-net of size 1.

As a consequence, the set $\{\Bv_{J_i} \in \R^{|J_i|}, \|\Bv_{J_i} - a \1/\sqrt{n}\|_2 \le t_{0}^i \delta_n, \mbox{ for some } a \in \Z/D, |a| \le \sqrt{n}\}$ trivially accepts an $r_n \sqrt{J_i}$-net of size $O(D \sqrt{n})$.
\end{remark}

We let $I_m=[n]\setminus I_s$.
We write $\Bv_m:=\Bv_{I_m}$ and $\Bv_s:=\Bv_{I_s}$. Note that $\Bv_s$ can be $r_{n}\sqrt{|I_s|}$-approximated by a vector $\Bv_s'$, which is a concatenation of vectors of form $(a_i \1/\sqrt{n})_{J_i'}$. We let $\Bv'=(\Bv_s', \Bv_m')$, where $\Bv_m' = \Bv_m$. Also, based on Subsection \ref{sub:approx}, without loss of generality we can assume the entries of $\Bv_m'$ are from $\Z/D$. 

Note that by definition,
\begin{equation}\label{eqn:Lv'}
\|(\wb{L}_{n}- \la_0) \Bv'\|_2 =O(\sqrt{ n} \log n \times r_n\sqrt{|I_s|}) ; \mbox{ and } |\sum_i v_i'| \le r_n |I_s|; \mbox{ and } \|\Bv'\|_2 = 1+ O(r_n \sqrt{I_s}),
\end{equation}
where $\sqrt{ n} \log n$ comes from the upper bound for $\|L-\la_0\|_2$. By the triangle inequality:
\begin{equation}\label{eqn:v'}
\|(\wb{L}_{n}- \la_0) \Bv'\|_2 \le \|(L- \la_0) \Bv\|_2+ \|(L- \la_0) (\Bv- \Bv')\|_2 =O(\sqrt{n} \log n \times n^{-C})+ O(\sqrt{ n } \log n \times r_n  \sqrt{|I_s|}).
\end{equation}
Although we will mention in more details later, as of this point we remark that our main starting point is to pass the event considered in Theorem \ref{thm:non-gap:lap} to the event that there exists $\Bv'$ such that \eqref{eqn:v'} holds.

As of this point, roughly speaking, we have extended the special vector $\Bv_{I}$ to a longest possible vector $\Bv_{s}$ whose entries take only a few values, while $\Bv_{m}$ is the left-over that we no longer detect entries with high multiplicities.

{\bf Case 1.} Let $\mathcal{E}_1$ be the event that there exists a non-zero $\Bv \in (\frac{1}{D}\Z)^n$ satisfying \eqref{eqn:Lv} and the resulting $I_m$ (from $\Bv$) is {\bf empty} in the sense that for all $i$ we can approximate $\Bv_{J_i}$. 

We let $\Bv'$ be obtained from $\Bv$ by replacing $\Bv_{J_i'}$ with $a_i \1/\sqrt{n}$, and together with some $v_j \in \Z/D$ for each $j \in J_i \bs J_i'$. As every sub-interval belongs to a level set, note that in this case we have 
$$\|\Bv_{J_i'} -a_i \1/\sqrt{n}\|_2 \le \delta_n t_{0}^{i_0} \le \delta_n t_{0}^{8\la_n^{-1}} .$$

The factor $t_0^{8\la_n^{-1}}$ can be as large as $(\log n)^{512 \la_n^{-1}}$, but as $\delta_n$ is sufficiently small from \eqref{eqn:delta_n}, the RHS is still small.

We will consider a $\delta_n t_{0}^{8\la_n^{-1}}$-approximation of the vectors. There are at most $D^{\al_n n} \times D^{8 \la_n^{-1}}$ choices of $\Bv'$ that can arise with empty $I_m$. The first factor stands for the amount of choices for the outliers; the second factor stands for the possible choices of the representative \( a_{i} \) of each interval, assuming that $1/D$ is small compared to $\delta_n t_{0}^{\la_n^{-1}}$, which is necessary for the approximation to make sense.

We write $\Bv'=(\Bv_1',\Bv_2')$ where $\Bv_1'$ are from the $(a_i/\sqrt{n})$ and the support of $\Bv_2'$ is at most $\al_n n$. It is important to emphasize that we are working with the event that there exists $\Bv'$ of that form such that $(L-\la_0)\Bv'$ is small. 

{\bf Subcase 1.1}. Assume first that the support of the $\{a_i, 1\le i \le \ell\}$ has length at least $2\delta_n'$, where
\begin{equation}\label{eqn:L:delta'}
\delta_n' := 8\la_n^{-3/2}  (\delta_n t_{0}^{\la_n^{-1}}) \sqrt{\log n}.
\end{equation}
Then either the smallest or largest values of $a_i$ is $\delta_n'$-far from $\la_n^{-1}/2$ of the rest. Assume without loss of generality that $a$, the multiplicity of the first segment, is $\delta_n'$-far from $\la_n^{-1}/2$ of the multiplicities $j_0$. 

Consider the event 
$$((\wb{L}_{n}- \la_0) \Bv')_i\le 2\sqrt{\log n}  (\delta_n t_{0}^{8\la_n^{-1}}), \,\mbox{where $i\in I_{j_0}$}.$$ 
This can be written as
\begin{align*}
    \sum_{j\neq i} \frac{a_j}{\sqrt{n}} x_{ij} - \frac{a_{i}}{\sqrt{n}} (\sum_{j\neq i} x_{ij})-(\la_0 \sum_{i}v_i') -f
    &= x_{i1}(a-a_{i})/\sqrt{n} + x_{i2}(a-a_{i})/\sqrt{n} + \dots\\
    &= (x_{i1} + x_{i2} + \dots + x_{i|J_1|})(a-a_{j_0})/\sqrt{n} + \dots,
\end{align*}
where $x_{ij}$ are the entries of the adjacency matrix, and $f$ depends on the $a_{i}$ but not on the $x_{ij}$\footnote{This $f$ comes from the part $\E L_{n}$ in the definition of $\wb{L}_{n}$.}.

Note that there is also the part for $\Bv_2'$, but we can fix the randomness over its vector support.

Hence, for all $i \in I_{j_0}$, because the sum $x_{i1} + x_{i2} + \dots + x_{i|J_1|}$ lies in $[\pm \Theta (\sqrt{\la_n n})]$ by the Central Limit Theorem,
$$\P\left(((\wb{L}_{n}-\la_0) \Bv')_i \le 2\sqrt{\log n}  \left(\delta_n t_{0}^{8\la_n^{-1}}\right)\right) \le 1/2,$$
provided that $ \delta_n' \sqrt{\la_n}$ is sufficiently larger than $\sqrt{\log n}  (\delta_n t_{0}^{\la_n^{-1}})$; this condition is satisfied by \eqref{eqn:L:delta'}.

Combining the above conclusion for all $i \in \cup_{j_{0}} I_{j_0}$ yields
$$\P\Big(((\wb{L}_{n}-\la_0) \Bv')_j \le 2\sqrt{\log n}  (\delta_n t_{0}^{8\la_n^{-1}}), i\in \cup_{j_{0}} I_{j_0}\Big) \le (1/2)^{(n/2- \al_n n)}.$$

Therefore, by tensorization (Lemma \ref{lem:tensorization}), we obtain
$$\P\Big(\|(\wb{L}_{n}-\la_0) \Bv'\|_2 =O(\sqrt{\log n}  (\delta_n t_{0}^{8\la_n^{-1}}) \sqrt{n})\Big) \le (1/2)^{n/3}.$$
Taking union bound over $D^{\al_n n} \times D^{8\la_n^{-1}} \le n^{C n/(\log n)^2}$ choices of $\Bv'$, we see that
$$
\P(\CE_1)\leq (1/2)^{n/4}.
$$

{\bf Subcase 1.2}. We next consider the case that all the values $a_i$ are of distance at most $2\delta_n'$. 
Then there exists $a$ such that
$$\|\Bv_1' - a \1/\sqrt{n} \|_2 \le O(\delta_n').$$ 
Recall that \( \Bv \) is almost orthogonal to \( \1 \), since \( \Bv \) is chosen as an non-trivial approximate eigenvector. Therefore, $\|\Bv - a \1/\sqrt{n}\|$ has order 1, so that $\|\Bv_2' -a \1/\sqrt{n})\|_{2}$ is of order 1. By triangle inequality,
$$\|(\wb{L}_{n}-\la_0) (\Bv' -a \1/\sqrt{n})\|_2 = O(\sqrt{\log n}  (\delta_n t_{0}^{\la_n^{-1}}) \sqrt{n}) + \sqrt{n} \log n (\delta_n') \ll \sqrt{n}/\log n,$$
provided that $\delta_n$ is chosen as in \eqref{eqn:delta_n}.

On the other hand, by the Levy anti-concentration bound (see for instance \cite[Lemma 2.6]{RV-LO}), there exists a constant $p<1$ such that 
$$\rho_{1/2}\left(\sum_{j \in \supp(\Bv_2')} x_{ij} (v_j'-a/\sqrt{n})\right)<p.$$
In particular, by tensorization, 
$$\P\Big((\wb{L}_{n}-\la_0) (\Bv' -a \1/\sqrt{n})=o( \sqrt{n})\Big) \le p^{n -2 \al_n n}.$$
Taking a union bound over the choices of $\Bv'$, we obtain $D^{\la_n^{-1}} D^{\al_n n} p^{n-2\al_n n}$, which is small as in Subcase 1.1.

Finally, taking union bound over $\binom{n}{\lfloor \la_{n} n \rfloor}$ choices for $I$, we see that, with $\la_{n}$ from \eqref{eqn:lambda}, the union bounds from both Subcase 1.1 and Subcase 1.2 are of order $e^{-\Theta(n)}$.
 
We now move to Case 2, which is more involved.

{\bf Case 2.} 
Let $\mathcal{E}_2$ be the event that there exists a non-zero $\Bv \in (\frac{1}{D}\Z)^n$, almost orthogonal to $\1$, whose first $\lceil \lambda_n n \rceil$ coordinates are the same,   $\|(\wb{L}_{n}-\la_0)\Bv\|_2 \le n^{-C}$, and the resulting $I_m$ (from $\Bv$) is not empty.  We have 
$$|I_m| \geq \lambda_n n/8 \mbox{ and } |I_s|\geq\lambda_n n.$$
Our treatment here is guided by the discussion from Subsection \ref{sub:overcrowding:discussion}. We first pass to the approximate eigenvector $\Bv'$ and consider the event \eqref{eqn:v'}. Assume that $I_s$ consists of $i_0\le 8\la_n^{-1}$ segments. We now describe a function $f$ from subsets of $[n]$ (that can occur as $I_s$) to subsets of $[n]$.  We describe $F=f(I_s)$, 
and write $I_m$ for $[n]\setminus I_s$, but note that $F$ does not depend
on $\Bv$. 
\begin{itemize} 
\item If $|I_s|>|I_m|$, we let $F$ be the first $|I_m|$ elements of $I_s$. 
\vskip .1in
\item Otherwise, we let $J_*$ be the first $J_i$ in $I_m$ and we let $F$ be the union of $I_s$ and the first $|I_m|-|I_s|$ elements of $I_m$ that are not in $J_*$. (This is possible because $|J_*|\leq \lambda_n n/2\leq  |I_s|.$) 
\end{itemize}

The following diagram explains the construction with $[-]$ denoting $I_m,I_s$ and $(-)$ denoting $F$:
\begin{figure}[h!]\centering
 
\begin{tikzpicture}
 
\draw  (0,2) node{[}node{(} -- (2, 2){}node[below]{$F$}  -- (4,2) node{)} node[above]{$I_s$}   -- (8,2)  node{]}node{[} -- (10,2) {} node[above]{$I_m$} -- (12,2) node{]};
\end{tikzpicture}
 
\caption{$|I_s|>|I_m|$}
\begin{tikzpicture}
 
\draw  (0,2) node{[}node{(} -- (2, 2){}node[above]{$I_s$}  -- (4,2) node{]}node{[}  -- (4.1,2) {} node[below]{$F$} -- (8,2)  node{)}node[above]{$I_m$}   -- (12,2) node{]};
\end{tikzpicture}
 
\caption{$|I_s|<|I_m|$} 
\end{figure}

In either case, we have 
$|F|=|I_m|$ and if $I_m$ is non-empty, it contains some $J_i$ that does not intersect $F$. Therefore, we create a square submatrix as well as an i.i.d.~thin submatrix out of these indices.

Let
\begin{equation}\label{eqn:c_n}
c^\ast_n = \log^{-2} n,
\end{equation}
and let $\CE_{drop}$ be the event that  there is a square submatrix $M_{A\times B}$ of $M=L_n$ of dimension $|A| = |B| \geq \lfloor n^{1 -\delta_0} \rfloor$ for a sufficiently small constant $\delta_0$ so that $n^{1-\delta_0} < \lambda_n n$, with the $c^\ast_n |A|$-least singular value at least $ c_0 c^\ast_n\sqrt{|A|}$.
Theorem \ref{thm:lap_overcrowding} tells us that
$
\P(\CE_{drop})\leq e^{O(n)} e^{-\Theta(\lfloor n^{1-\delta_0} \rfloor) \sqrt{n}}
$ 
(more specifically, $e^{c^\ast_n n \log n} e^{-\Theta(c^\ast_n\lfloor n^{1-\delta_0} \rfloor) \sqrt{n}}$), and thus
$
\P(\CE_{drop})\leq  e^{-c_{drop} n^{3/2-2\delta_0} },
$
for some $c_{drop}>0$.

We wish to bound the probability of $\mathcal{E}_2\setminus \CE_{drop}$.
Recall that for $\Bv$ causing $\mathcal{E}_2$, we have  $|I_m|> \la_n n/8$.

Outside of $\CE_{drop}$,  the square matrix $M_{F \times I_m}$ is near isometry: it has a rectangular submatrix $M_{12}=M_{F \times {{I_m'}}}$ of dimension $|I_m| \times (|I_m| -  c^\ast_n |I_m|) $  with the least (non-trivial) singular value at least $ c_0 c^\ast_n\sqrt{|I_{m}|}$. 

Given subsets $S$ (for $I_m$),  and $I_m' \subset I_m$ of size $|I_m'|=|I_m| - c^\ast_n |I_m|$, let $\mathcal{E}_{2.1}(S,{{{{I'_m}}}}))$ be the event that there exists $\Bv'$ such that the above holds, and $\|(\wb{L}_{n}-\la_0)\Bv'\|_2$ is  $O(\sqrt{ n} \log n \times r_n\sqrt{|I_s|})$. 

\begin{lemma}\label{claim:cond:fix} 
For all $S$ and \( I_{m}' \) such that $\mathcal{E}_{2.1}(S,I'_m)$ is defined, we have that
$$
\P(\mathcal{E}_{2.1}(S,{{{{I'_m}}}})) \leq \left( \frac{1}{t_0}\right)^{\lambda_n n/16}.$$
\end{lemma}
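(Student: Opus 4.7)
The plan is to combine three ingredients: (i) a net for the structured parts of $\Bv'$, (ii) the pseudo-inversion of $M_{12}$ (available outside $\CE_{drop}$) to eliminate the freedom of $\Bv'_{I_m'}$, and (iii) a tensorized small-ball estimate on the rows of $(\wb L_n-\la_0)\Bv'$ indexed by a half of $J_\ast$.

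For (i) and (ii), I would first discretize $\Bv'$ piecewise: the sparse part $\Bv'_{I_s}$ is a concatenation of scaled indicators $a_i\1/\sqrt n$ with $a_i\in(1/D)\Z$, so refining the per-segment net of Remark~\ref{remark:L:sparse:trivialnet} below the Littlewood--Offord scale $r_n$ yields a total net of size $\exp(O(\log n/\la_n^2))$; and the remainder $\Bv'_{I_m\bs I_m'}$, having only $c^\ast_n|I_m|\le n/\log^2 n$ coordinates in $(1/D)\Z$ of bounded magnitude, admits a coordinate-wise net of size $\exp(O(n/\log n))$. For $\la_n\ge 1/\log\log n$ both factors remain comfortably below the reciprocal of the target. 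Next, outside the event $\CE_{drop}$ from Theorem~\ref{thm:lap_overcrowding}, the submatrix $M_{12}=(\wb L_n)_{F\times I_m'}$ has smallest non-trivial singular value $\ge c_0 c^\ast_n\sqrt{|I_m|}$. After fixing a net element and revealing the rows of $\wb L_n$ indexed by $F$, the $F$-block of $(\wb L_n-\la_0)\Bv'\approx 0$ pins down $\Bv'_{I_m'}$ uniquely up to an error $\|M_{12}^+\|\cdot(\text{residual})$, so $\Bv'$ has become a determined function of the conditioned data.

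For (iii), split $J_\ast=S_1\sqcup S_2$ into roughly equal halves. For $i\in S_1$ the Laplacian identity reads
\[
[(\wb L_n-\la_0)\Bv']_i=\sum_{j\ne i}a_{ij}(v_i'-v_j')-(pn+\la_0)v_i',
\]
and after further conditioning on the symmetric pairs $\{a_{ii'}:i,i'\in S_1\}$, the residual randomness $\{a_{ij}:i\in S_1,\ j\in S_2\cup([n]\bs(F\cup J_\ast))\}$ is independent across $i\in S_1$. Since $J_\ast\subset I_m$, Remark~\ref{remark:L:sparse:rho} guarantees at least $\al_n\la_n n$ coordinates of $\Bv_{J_\ast}$ at distance $\ge r_n$ from any scalar, and pigeonholing into $S_2$ preserves at least $\al_n\la_n n/2$ of them. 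Theorem~\ref{theorem:LO} gives per-row anti-concentration $O(1/\sqrt{\al_n\la_n n})$ at scale $r_n$, and Lemma~\ref{lem:tensorization} upgrades this to
\[
\P\!\left(\|((\wb L_n-\la_0)\Bv')_{S_1}\|_2\le r_n\sqrt{|S_1|}\right)\le\bigl(O(1/\sqrt{\al_n\la_n n})\bigr)^{|S_1|}\le\exp(-\Theta(\la_n n\log n)),
\]
which, combined with the net cardinality, is well below $(1/t_0)^{\la_n n/16}=\exp(-4\la_n n\log\log n)$.

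The main obstacle is the three-way balance of scales: the net must be refined enough that the equation error collapses inside the tensorization scale $r_n\sqrt{|S_1|}$, yet coarse enough that its cardinality is absorbed by the small-ball bound; meanwhile the diagonal entries $(\wb L_n)_{ii}$ and symmetric pairs $a_{ii'}$ must be conditioned on without destroying the row independence used in the tensorization. The parameter choices $t_0=\log^{64}n$, $\al_n=\log^{-2}n$, and $c^\ast_n=\log^{-2}n$ are engineered precisely to make these three demands mutually compatible for $\la_n$ as small as $1/\log\log n$.
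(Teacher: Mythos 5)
Your high-level outline — net the structured parts, use the $M_{12}$ pseudoinverse from outside $\mathcal{E}_{drop}$ to pin down $\Bv'_{I_m'}$, then tensorize an affine Littlewood--Offord bound over a batch of independent rows — is the same skeleton the paper uses, and the row-splitting trick (conditioning on the $S_1\times S_1$ symmetric pairs to restore independence) is a legitimate alternative to the paper's choice of rows in $([n]\setminus F)\setminus J_*$. However, there is a genuine gap in the scale bookkeeping that invalidates your claimed per-row estimate.

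You assert that the equation error ``collapses inside the tensorization scale $r_n\sqrt{|S_1|}$'' and therefore that Theorem~\ref{theorem:LO} gives per-row anti-concentration $O(1/\sqrt{\al_n\la_n n})$ at scale $r_n$. This is false: even before the pseudoinversion, the event you are analysing is $\|(\wb L_n-\la_0)\Bv'\|_2\le\sqrt n\log n\times r_n\sqrt{|I_s|}$, so $\sum_{i\in S_1}|X_i\cdot\Bv'|^2$ is controlled only at the threshold $n\log^2 n\,r_n^2|I_s|$, forcing a per-row scale of at least $\sqrt n\log n\,\la_n^{-1/2}r_n\gg r_n$. Worse, after replacing $\Bv_m''$ by $\Bu_m''=-H_{12}M_{11}\Bv_s''$, the triangle inequality injects an additional $\|M_{22}\|\,\|H_{12}\|\approx\sqrt n\log n\cdot(c_0 c^\ast_n\sqrt{|I_m|})^{-1}$ amplification, so the residual norm in the rows you use is $\Theta(\sqrt n\log n\,K_n r_n)$ with $K_n=\sqrt n\log n\,\la_n^{-1}c_0^{-1}(c^\ast_n)^{-1}$. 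The relevant per-row scale is therefore $R=\Theta(\log n\,K_n r_n)$, which is larger than $r_n$ by a factor of order $\sqrt n\,(\log n)^{4}\la_n^{-1}$, and by Theorem~\ref{theorem:LO} the per-row small-ball probability is only $O\!\bigl(R/(r\sqrt{N'})\bigr)$ with $r=t_0^{1/2}r_n$ — this is exactly where the extra $t_0^{1/2}$ built into \eqref{eqn:r_n} is spent — which evaluates to $O(1/t_0^{1/4})$ and not $O(1/\sqrt{\al_n\la_n n})$. Consequently your intermediate bound $\exp(-\Theta(\la_n n\log n))$ is not justified; the correct tensorized bound is of the much weaker form $(1/t_0^{1/4})^{\Theta(\la_n n)}=\exp(-\Theta(\la_n n\log\log n))$, which is exactly what the target $(1/t_0)^{\la_n n/16}$ accommodates and why $t_0$ must be a large power of $\log n$ rather than a constant. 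A secondary, non-fatal, oversight: your net for the sparse part counts only the $a_i$'s and misses the up to $8\al_n\la_n^{-1}\cdot\la_n n=8\al_n n$ outlier coordinates $J_i\setminus J_i'$ which are free in $(1/D)\Z$; these contribute $D^{O(\al_n n)}=e^{O(n/\log n)}$ to the union bound, which happens to be the same order as your $\Bv'_{I_m\setminus I_m'}$ net but cannot simply be omitted.
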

\begin{proof}[Proof of Lemma \ref{claim:cond:fix}] We fix a choice of $\Bv_s'$ and $\Bv_{I_m\setminus {{I'_m}}}'$. In what follows, we denote \( M_{11}, M_{12}, M_{21}, M_{22}\) as follows:
  $$M_{11}= L_{F \times  (I_s \cup (I_m\bs I_m')) }, M_{12}= L_{F \times I_m'}, M_{21}= L_{[n]\bs F \times  (I_s \cup (I_m\bs I_m'))}, M_{22} = L_{[n]\bs F \times I_m'}.$$
\[
\begin{bmatrix}
\multicolumn{3}{c|}{M_{11}} & & M_{12} \\
\multicolumn{3}{c|}{} & & \\
\hline
\multicolumn{3}{c|}{M_{21}} & & M_{22} \\
\end{bmatrix}
\]

Recall that the matrix $M_{12}$ is near isometry. Let $H_{12}$ be an $(|I_m|-c^\ast_n |I_m|) \times |I_m|$ matrix where $H_{12}M_{12}|_{I_m'} = I_{I_m'}$ (i.e. \( H_{12} \) is a left inverse of \( M_{12} \)). By the above (i.e. via the application of Theorem \ref{thm:lap_overcrowding}), we have 
$$\|H_{12}\|_{2} \le c_0^{-1}(c^\ast_n)^{-1}/\sqrt{|I_m|}.$$
If we write
$$\Bv_s'' = (\Bv_s', \Bv_{I_m\bs I_m'}') \quad \text{and} \quad \Bv_m'' = \Bv_{I_m'}',$$
so that 
\begin{equation}\label{eqn:1}
\|\wb{L}_{n} \Bv'\|_{2} = \|(M_{11}\Bv_s'', M_{12}\Bv_s'') + (M_{21}\Bv_m'', M_{22}\Bv_m'') \|_2 \le  \sqrt{ n }\log n \times r_n \sqrt{|I_s|}.
\end{equation}
Then we have that
$\|M_1' \Bv_s'' + M_2' \Bv_m''\|_{2} \le  \sqrt{ n} \log n \times r_n\sqrt{|I_s|}$.
In particular, after applying $H_{12}$ we have that 
$$\|H_{12}M_{11}\Bv_s''  +\Bv_m'' \|_2 \le  \sqrt{ n} \log n \times r_{n} c_0^{-1}(c^\ast_n)^{-1}\sqrt{|I_s|}/\sqrt{|I_m|} \le  \sqrt{n}\log n \times \la_n^{-1/2} r_n c_0^{-1}(c^\ast_n)^{-1}\left(\frac{1-c_m}{c_m}\right)^{1/2}$$ 
$$\le  \sqrt{n}\log n \times \la_n^{-1} r_n c_0^{-1}(c^\ast_n)^{-1},$$
where $|I_m| = c_m n$, with \( c_{m} \) satisfying $\la_n/8<c_m < 1-\la_n$. Here we note that $\delta_n$ is sufficiently small given all other parameters, and so the RHS bound will be small. 

In other words, $\Bv_m''$ is almost determined if we fix $M_{11}, M_{12}$ and $\Bv_s''$. At this point, we will replace $\Bv_m''$ by $\Bu_m'' = -H_{12}M_{11}\Bv_s''$. The latter vector is fixed if we condition on $M_{11}, M_{12}$, and $\Bv_s''$. Note that 
$$\|\Bv_m''- \Bu_m''\|_2 \le\sqrt{n}\log n \times \la_n^{-1} r_n c_0^{-1}(c^\ast_n)^{-1}=K_n r_n,$$
where 
$$K_n:= \sqrt{n}\log n \times \la_n^{-1} c_0^{-1}(c^\ast_n)^{-1}.$$ 
This approximation of $K_n r_n$ is rather big compared to $\sqrt{n} r_n$, but it is still smaller than $t_0^{1/2} \sqrt{n} r_n$ because $t_0$ is a much larger power of $\log n$.

We have 
\begin{align*}
\|M_{21} \Bv_s'' + M_{22} \Bu_m''\|_2 \le \|M_{21} \Bv_s'' + M_{22} \Bv_m''\|_2 + \|M_{22} (\Bv_m'' -\Bu_m'')\|_2 & \le  C \sqrt{ n }\log n \times r_n\sqrt{|I_s|} +  \|M_{22}\|_{2} K_n r_n \\
& \le C \sqrt{ n} \log n \times K_n r_n .
\end{align*}
 Note that $M_{21}$ is the matrix  $L_{[n]\bs F \times (I_s \cup (I_m\bs I_m'))}$ and $M_{22}$ is the matrix $L_{[n]\bs F \times I_m'}$. We let $J_*$ be one of the $J_i$ that is in $I_m'$ but has no intersection with $F$. It is over these coordinates that we will calculate the small ball probabilities and subsequently apply tensorization.

\begin{figure}[ht]
    \centering
\begin{tikzpicture}
 
\draw  (0,2) node{[}node{(} -- (2, 2){}node[below]{$F$}  -- (4,2) node{)} node[above]{$I_s$}   -- (8,2)  node{]}node{[} --(8.7,2) node{(} -- (9,2) node[below]{$J_*$} -- (9.3,2) node{)}
 -- (10,2) {} node[above]{$I_m' \subset I_m$} -- (12,2) node{]};
\end{tikzpicture}
 
\caption{$|I_s|>|I_m|$}
\begin{tikzpicture}
 
\draw  (0,2) node{[}node{(} -- (2, 2){}node[above]{$I_s$}  -- (4,2) node{]}node{[}  -- (4.1,2) {} node[below]{$F$} -- (8,2)  node{)}node[above]{$I_m' \subset I_m$} -- (8.7,2) node{(} -- (9,2) node[below]{$J_*$} -- (9.3,2) node{)}
  -- (12,2) node{]};
\end{tikzpicture}
 
\caption{$|I_s|<|I_m|$} 
\end{figure}

For each $i\in ([n]\setminus F) \setminus J_*$, we let $X_i$ be the $i$th row of $\wb{L}_n-\la_0$.  Then we can write $|X_i \cdot \Bv'| = O(\log n \times K_n r_n)$ as
\begin{equation}\label{E:toget}
|\sum_{j\in J_*} x_{ij}(v_j'-v_i') + \sum_{j\in [n]\setminus (J_* \cup \{ i\})} x_{ij}(v_j'-v_i')+ f| = O(\log n \times K_n r_n)  
\end{equation}
where $v_i'$ are the entries of $\Bv'$, and $f$ depends on the $v_{i}'$'s but not on the $x_{ij}$'s.

Notice that the collection $\left\{x_{ij} \colon i\in  ([n]\setminus F) \setminus J_*, j\in J_*\right\}$ is independent.
We further condition on $x_{ij}$ for $i\in  ([n]\setminus F) \setminus J_*$ and $j\not \in J_*$.
Since $J_*\cap F=\emptyset$, none of the $x_{ij}$ for $i\in ( [N]\setminus F) \setminus J_*$ and $j\in J_*$ have been conditioned on.
Thus, after our conditioning, the probability of \eqref{E:toget} holding is at most $\rho_{L,\log n \times K_n r_{n}}(\Bv_{J_*}').$
Thus, by tensorizing (Lemma \ref{lem:tensorization}) over the \( n - |F| - |J^{*}| \) independent rows, the probability that we have the event in the claim for a given $\Bv_s$ and $\Bv_{I_m\setminus {{I'_m}}}$ is at most
\begin{equation}\label{eq:sbp_tensor_case2}
\rho_{L,\log n \times K_n r_{n}}(\Bv_{J_*}')^{n-|F|-|J_*|}.
\end{equation}
Since $J_*$ is one of the $J_i$ that is a subset of $I_m$. Therefore, by the definition of non-sparse interval (\ref{eqn:dis}), for any $a$ there are at least $\al_n \la_n n$ coordinates $i\in J_*$ such that (noting that over the mix part, $v_i=v_i'$) 
$$|v_i'-a\1/\sqrt{n}| \ge t_0^{1/2} r_n.$$ 

Theorem~\ref{theorem:LO} (more specifically \eqref{eqn:pig}, and also in the spirit of Remark \ref{remark:L:sparse:rho}) implies that, with $r=t_0^{1/2}r_n$ and $R = \sqrt{\log n} K_n r_{n}$,
$$
\rho_{L,\log n \times K_n r_{n}}(\Bv_{J_*}') = O\left(\frac{ \log n \times K_n}{t_0^{1/2}} \frac{1}{\sqrt{\al_n \la_n n}}\right)= O\left(\frac{\log^{3/2} n \times \al_n^{-1/2} \la_n^{-3/2} c_0^{-1}(c^\ast_n)^{-1}}{ t_0^{1/2}}\right) = O\left(\frac{1}{t_0^{1/4}}\right).$$
Note that with the choices of parameters $\la_n$ from Theorem \ref{thm:non-gap:lap}, $c^\ast_n$ from \eqref{eqn:c_n}, $\al_n$ from \eqref{eqn:a_n}, and $t_0$ from \eqref{eqn:t_0}, we see the that above is much smaller than 1. We have $|J_*|\leq \lambda_n n/2$, $|F|=|I_{m}|$, therefore combining the above with \eqref{eq:sbp_tensor_case2}, we have that the probability of the event in the claim for a given $\Bv_s$ and $\Bv_{I_m\setminus {{I'_m}}}$ is at most
$$
\left(\frac{1}{t_0^{1/4}}\right)^{n-|F|-\lambda_n n/2} \le \left( \frac{1}{t_0^{1/4}}\right)^{|I_s|-\lambda_n n/2} \le \left( \frac{1}{t_0^{1/4}}\right)^{\lambda_n n/2}.
$$
The total number of $\Bv_s'$ and $\Bv_{I_m\setminus {{I'_m}}}'$ is bounded by $D^{i_0 \al_n \la_n n} \times D \sqrt{n}$ and $D^{c^\ast_n n}$, where the first bound comes from the choices of non-sparse elements times the cardinality of net from Remark \ref{remark:L:sparse:trivialnet}, while the second bound comes from $|I_m|-|I_m'| = c^\ast_n n$. Note that 
$$D^{i_0 \al_n \la_n n} \times D \sqrt{n}, D^{c^\ast_n n}  \le n^{O(\frac{n}{\log^2n})}  = e^{O(\frac{n}{\log n})}.$$
Hence we obtain the following upper bound for the event $\CE_{2.1}$ (given the choice of $t_0$ from \eqref{eqn:t_0}):
\begin{align*}
D^{i_0 \al_n \la_n n} \times D \sqrt{n} \times D^{c^\ast_n n} \times  \left( \frac{1}{t_0^{1/4}}\right)^{\lambda_n n/2} \le \left( \frac{1}{t_0}\right)^{\lambda_n n/16} .\end{align*}
\end{proof}

The event $\CE_2$ is the union of $\CE_{drop}$ with $\mathcal{E}_{2.1}(S,{{I'_m}},{{{{I'_s}}}})$ over $S,{{I'_m}},{{{{I'_s}}}}$, with $|S|-|{{I'_m}}|= c_\ast |I_m|$,
and $n-|S|\geq \lambda_n n$ and $|S|$ a possible value of $I_m$.  There are at most $2^{8\la_n^{-1}}$ possible values of $I_m$ (and hence $S$). Taking a union bound over all the choices, and over all $I$ (of Theorem \ref{thm:non-gap:lap}) of size $\la_n n$ we obtain an entropy bound at most $4^n$. We obtain the following bound for the event \eqref{eqn:v'} within Case 2
$$4^{n} \left( \frac{1}{t_0}\right)^{\lambda_n n/16} \le e^{-\Theta(n)},$$
given the choice of $t_0$ from \eqref{eqn:t_0} and $\la_n$ from Theorem \ref{thm:non-gap:lap}.

\section{No-Structure Delocalization: Proof of Theorem \ref{thm:LCD:lap} for the Approximate Eigenvectors}
We let $\CE_{non-gap}$ denote the event in Theorem \ref{thm:non-gap:lap}, where we have showed in the previous section that 
$$\P(\CE_{non-gap}) = 1- n^{{-\omega(1)}}.$$

We also let $\CE_{i.i.d.-norm}$ be the event that the matrix $\wb{L}_{n}$ with zeros on the diagonal (instead of the entries of $\diag(D_{n} - \E D_{n})$) has norm $O(\sqrt{n})$. It is well known that $\CE_{i.i.d.-norm}$ holds with probability $1 -e^{{\Theta(n)}}$. 

Throughout this section, if not specified otherwise, we will condition on these two overwhelming events $\CE_{non-gap}$ and $\CE_{i.i.d.-norm}$. 

Theorem \ref{thm:non-gap:lap} applied to $\la_{n}= 1/\log \log n$ yields that
\begin{corollary}\label{cor:lap_incompressible} Any subvector $\Bv_{J}$ of $\Bv$ of size $|J| =n_{0}/\sqrt{\log \log n}$ satisfies
$$\|\Bv_{J} -a1_{J}\|_{2} \ge \frac{1}{(\log n)^{A_{0} \log \log n}} =: c_{n}(=n^{{-o(1)}}).$$
\end{corollary}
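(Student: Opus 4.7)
The plan is to derive the corollary directly from Theorem~\ref{thm:non-gap:lap} by a short monotonicity argument; no new probabilistic input is needed. First, note that in the setting of Theorem~\ref{thm:LCD:lap} we have $\lambda_n \ge 1/\sqrt{\log\log n}$, so the index set $J$ has size $|J| = n_0/\sqrt{\log\log n} = \lambda_n n/\sqrt{\log\log n} \ge n/\log\log n$. Pick any $I \subset J$ with $|I| = \lceil n/\log\log n \rceil$. For every $a \in \R$,
\begin{equation*}
\|\Bv_J - a\1_J\|_2^2 = \sum_{j\in J}(v_j-a)^2 \ge \sum_{i\in I}(v_i-a)^2 = \|\Bv_I - a\1_I\|_2^2,
\end{equation*}
so $\inf_{a}\|\Bv_J - a\1_J\|_2 \ge \inf_{a}\|\Bv_I - a\1_I\|_2$.

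Next, apply Theorem~\ref{thm:non-gap:lap} with the choice $\lambda_n^{\ast} := 1/\log\log n$, which saturates the lower endpoint of the range allowed there. Outside an event of probability $n^{-\omega(1)}$, every unit eigenvector $\Bv$ of $\wb{L}'_{n-1}$ (or $\wb{L}_{n-1}$) with $\Bv \perp \1$ satisfies
\begin{equation*}
\inf_{a\in\R}\|\Bv_I - a\1_I\|_2 \ge (\log n)^{-A_0/\lambda_n^{\ast}} = (\log n)^{-A_0 \log\log n} =: c_n,
\end{equation*}
uniformly for every such $I \subset [n]$ of size $\lceil n/\log\log n \rceil$. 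Combining this with the monotonicity of the previous paragraph gives $\inf_{a}\|\Bv_J - a\1_J\|_2 \ge c_n$, and the estimate $c_n = n^{-o(1)}$ follows from $A_0(\log\log n)^2 = o(\log n)$.

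There is no real obstacle at this stage: the corollary is essentially a restatement of Theorem~\ref{thm:non-gap:lap} at the extreme value $\lambda_n = 1/\log\log n$, combined with the trivial observation that passing from a larger set $J$ to a smaller subset $I \subset J$ cannot increase the minimal affine distance. All the substantive work has already been absorbed into Theorem~\ref{thm:non-gap:lap} itself, via the sparse/mixed decomposition and the two-case analysis of Section~\ref{sec:nogaps}; at the level of the corollary only a one-line reduction remains.
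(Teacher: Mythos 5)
Your proof is correct and takes essentially the same route as the paper: the paper's own derivation is the one-liner ``Theorem~\ref{thm:non-gap:lap} applied to $\lambda_n = 1/\log\log n$ yields that\ldots'', and your version simply spells out the monotonicity step (shrinking $J$ to a subset $I$ of size $\lceil n/\log\log n\rceil$ cannot increase the infimal affine distance) that makes the application rigorous when $|J| > n/\log\log n$.
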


Assume that there exists an approximate eigenvector $\Bv$ and an index $I$ such that 
\begin{equation}\label{eqn:v_{I}:start}
\LCD_{\kappa, \gamma}(\Bv_{I}) \in [D_{0},2D_{0}),
\end{equation}
where $D_{0} \asymp n^{A}$. Under $\CE_{non-gap}$, by Theorem \ref{thm:non-gap:lap} and Lemma \ref{lem:incomp_lcd}, we have that $A \ge 1/2$, for appropriate choices of $\kappa, \gamma$ (such as $\kappa =n^{1/3}$ and $\gamma=c_{n}^{3}$ -- which satisfy the conditions of Lemma \ref{lem:incomp_lcd} with $c_{0}$ a sufficiently small constant, and with $c_{1} = 1/(\log n)^{A_{0}/c_{0}}$ and $C=10$ respectively). 

The key idea to show that the event from \eqref{eqn:v_{I}:start} has small probability is as follows:

\begin{itemize}
\item  Step 1: If $\LCD(\Bv_{I})$ is small, $\Bv_{I}$ has structure. However, as we don't have any information on $\Bv_{I^{c}}$, we will use as a guide the discussion in Subsection \ref{sub:overcrowding:discussion} to solve for $\Bv_{I^{c}}$ by conditioning on some parts of the matrix $\wb{L}_{n}$. We will do so by relying on Theorem \ref{thm:lap_overcrowding}, see \eqref{eqn:v_{I^{c}}}.
\vskip .1in
\item Step 2: Starting from \eqref{eqn:v_{I^{c}}}, which roughly can be expressed as an event $\|M_{11} \Bv_{I} -\Bf\|_{2}$ being small for some fixed $\Bf$, we will work with the randomness of $M_{11}$, a principal minor of size $n_{0} \times n_{0}$.  As this matrix is symmetric, we have to decompose it into smaller rectangular blocks to see independent random rows and entries, see Figure \ref{figure:rec}. One major problem here is that the entries on the diagonal depend on other entries of the rectangular blocks, and hence we have to use the $\rho_{L}$ notion from Theorem \ref{theorem:LO} rather than its $\rho$ counterpart. Because there are many shifts to deal with, we will start from the shift giving the worst small ball probability (see \eqref{eqn:D:v_{I}}). This approach is significantly different from the regularized $\LCD$ structures used in \cite{V-symmetric} to deal with random symmetric matrices. 
\end{itemize}

Toward the second step, we will need to work with some more special form of $\Bv_{I}$, rather than knowing that its $\LCD$ is large. We first fix a subset $I$ of size $n_{0}$ in $[n]$. Divide $I$ into $k=\lfloor \sqrt{\log \log n} \rfloor$ intervals $I_{1},\dots, I_{k}$ (for instance, of indices arranged consecutively in $I$), each of size approximately 
$$m=n_{0}/k.$$ 
The vector $\Bv_{I}$ is decomposed into $\Bv_{I_{1}},\dots, \Bv_{I_{k}}$ accordingly.

Given $A$ and sufficiently large $C$ (which is also used in the approximations of Subsection \ref{sub:approx}), let 
$$\CA := \{a\in \R, |a| \le n^{-1/2} \log^{2}n, a = \frac{l}{n^{C+1}}, l\in \Z\}.$$ 
Further, let
$$\gamma' =\gamma/c_{n} = c_{n}^{2}, \mbox{ which has order $n^{{-o(1)}}$.}$$ 
Define \footnote{We note that $D(\Bv_{I})$ does depend on the partition sets $I_{1},\dots, I_{k}$ and on $\kappa, \gamma'$, but we suppress this dependency in the notation for simplicity.}
\begin{equation}\label{eqn:D:v_{I}}
D(\Bv_{I}):=\max_{j} \min_{a_{j}\in \CA} \LCD_{\kappa,\gamma'}\left(\frac{\Bv_{I_{j}}-a\1}{\|\Bv_{I_{j}}-a\1\|_{2}}\right)=\max_{j}  D_{j}.
\end{equation}

We first notice that, as $a_{j} \in \CA$ and $ c_{n} \le \|\Bv_{I_{j}}\|_{2} \le 1$, by Corollary \ref{cor:lap_incompressible}
\begin{equation}\label{eqn:v_{j}:bound} 
c_{n} \le \|\Bv_{I_{j}}-a_{j}\1\|_{2} \le \log^{4}n.
\end{equation}

\begin{claim}\label{claim:LCDast:1}
Assume that $\min_{a\in \CA} \LCD_{\kappa,\gamma'}\left(\frac{\Bv_{I_{j}}-a\1}{\|\Bv_{I_{j}}-a\1\|_{2}}\right)=D_{j}$, where $D_{j}=n^{{O(1)}}$. Then for any $a\in \CA$ and $r\ge 1/D_{j}$,
$$\P(|X \cdot (\Bv_{I_{j}}-a\1)|\le r)=O(r \log^{{O(1)}} n).$$
\end{claim}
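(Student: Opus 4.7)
Fix any $a\in\CA$ and set $\Bw:=\Bv_{I_{j}}-a\1$. The strategy is to reduce to the LCD-based small ball estimate (Theorem \ref{thm:small_ball_prob_LCD}) applied to the unit vector $\Bw/\|\Bw\|_{2}$. The crucial point is that, because $D_{j}$ in \eqref{eqn:D:v_{I}} is defined as a \emph{minimum} over all shifts in $\CA$, the bound
\[
\LCD_{\kappa,\gamma'}(\Bw/\|\Bw\|_{2}) \ge D_{j}
\]
holds uniformly over every $a\in\CA$, not merely for the optimal shift $a_{j}$. This uniform structural statement is what allows the conclusion to apply to arbitrary $a\in\CA$.

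First I would establish the two-sided norm bound $c_{n}\le \|\Bw\|_{2}\le \log^{O(1)} n$. The lower bound comes from Corollary \ref{cor:lap_incompressible} applied to $\Bv_{I_{j}}$: since $|I_{j}|=m\asymp n_{0}/\sqrt{\log\log n}$, we are in the regime where no affine shift of $\Bv_{I_{j}}$ can be too close to zero, and in particular $\|\Bv_{I_{j}}-a\1\|_{2}\ge c_{n}$ for every real $a$. For the upper bound, the triangle inequality combined with $|a|\le n^{-1/2}\log^{2}n$ and $\sqrt{|I_{j}|}\le\sqrt{n}$ gives $\|\Bw\|_{2}\le 1+\log^{2} n$, in agreement with \eqref{eqn:v_{j}:bound}.

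Next, Theorem \ref{thm:small_ball_prob_LCD} applied to the unit vector $\Bw/\|\Bw\|_{2}$ (with $\kappa=n^{c}$, so that the $e^{-\Theta(\kappa^{2})}$ term is negligible) yields
\[
\rho_{\varepsilon}(\Bw/\|\Bw\|_{2}) \le C\,(\varepsilon/\gamma')
\]
for every $\varepsilon\ge 1/D_{j}$. Translating back via the identity $\rho_{r}(\Bw)=\rho_{r/\|\Bw\|_{2}}(\Bw/\|\Bw\|_{2})$ requires splitting into two regimes according to the size of $r/\|\Bw\|_{2}$. When $r\ge \|\Bw\|_{2}/D_{j}$, the theorem applies directly and gives $\rho_{r}(\Bw)\le O(r/(\|\Bw\|_{2}\gamma'))$, which is of order $r\cdot n^{o(1)}$ after inserting $\|\Bw\|_{2}\ge c_{n}$ and $\gamma'=c_{n}^{2}$. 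When $1/D_{j}\le r<\|\Bw\|_{2}/D_{j}$, monotonicity of small ball probabilities gives the fallback bound $\rho_{r}(\Bw)\le \rho_{\|\Bw\|_{2}/D_{j}}(\Bw)\le O(1/(D_{j}\gamma'))\le O(r/\gamma')$. In either case the total bound is $O(r\cdot n^{o(1)})$, which is the content of the claim (with the $\log^{O(1)} n$ factor absorbing the $c_{n}$-dependence).

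\textbf{Main obstacle.} The delicate point is tracking the norm factor $\|\Bw\|_{2}$, which varies with $a$ and may sit anywhere in the interval $[c_{n},\log^{O(1)} n]$. This forces the rescaling $\varepsilon=r/\|\Bw\|_{2}$ when passing between $\Bw$ and its normalization, and produces the factor $\|\Bw\|_{2}^{-1}\gamma'^{-1}$ in the final estimate. The structural input that rescues the argument is the uniformity of the LCD lower bound across $\CA$, which is baked into the definition of $D_{j}$ as a minimum; without it one would have to analyze each shift $a$ separately, and for shifts $a\neq a_{j}$ the LCD hypothesis would be lost.
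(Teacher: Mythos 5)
Your proof is correct and follows essentially the same route as the paper: extract the uniform LCD lower bound from the definition of $D_j$ as a minimum over $\CA$, control $\|\Bw\|_2$ via Corollary \ref{cor:lap_incompressible} and \eqref{eqn:v_{j}:bound}, and invoke Theorem \ref{thm:small_ball_prob_LCD} with $\kappa=n^c$ to make the exponential term negligible. The one place you are more careful than the paper's terse write-up is in explicitly handling the regime $1/D_j\le r<\|\Bw\|_2/D_j$ via monotonicity of small-ball probabilities; the paper implicitly relies on the fact that its statement of Theorem \ref{thm:small_ball_prob_LCD} already applies to any $\Bx$ with $\|\Bx\|_2\ge 1$ (not only unit vectors), so when $\|\Bw\|_2\ge 1$ the hypothesis $r\ge 1/D_j$ suffices without any rescaling, while when $\|\Bw\|_2<1$ the rescaled threshold $\|\Bw\|_2/D_j$ is automatically below $1/D_j$ --- either way reaching the same conclusion as your monotonicity step.
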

\begin{proof} By definition, for any $a\in \CA$, we have $\LCD_{\kappa,\gamma'}\left(\frac{\Bv_{I_{j}}-a\1}{\|\Bv_{I_{j}}-a\1\|_{2}}\right) \ge D_{j}$. Hence by Theorem \ref{thm:small_ball_prob_LCD} (noting that under $\CE_{non-gap}$, \eqref{eqn:v_{j}:bound} holds), for any $r \ge 1/D_{j} \ge 1/\LCD_{\kappa,\gamma'}\left(\frac{\Bv_{I_{j}}-a\1}{\|\Bv_{I_{j}}-a\1\|_{2}}\right)$,
\begin{align*}
\sup_{b}\P(|X \cdot (\Bv_{I_{j}}-a\1) -b|\le r)&=\sup_{b}\P(|X \cdot \frac{\Bv_{I_{j}}-a\1}{\|\Bv_{I_{j}}-a\1}\|_{2} -b|\le \frac{r}{\|\Bv_{I_{j}}-a\1\|_{2}})\\
&= O(r (\gamma 'c_{n})^{-1}+ \exp(-\kappa^{2})) = O(r \log^{O(1)}n).
\end{align*}
\end{proof}

\begin{claim}
We have 
$$c_{n}^{-1}D(\Bv_{I}) \le \LCD_{\kappa,\gamma}(\Bv_{I}).$$ 
\end{claim}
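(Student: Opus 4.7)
The plan is to transport a witness for $\LCD_{\kappa,\gamma}(\Bv_I)$ into a witness for $\LCD_{\kappa,\gamma'}$ of each affine-shifted block $(\Bv_{I_j}-a_j\1)/\|\Bv_{I_j}-a_j\1\|_2$ and then take the maximum over $j$. The two crucial ingredients will be the affine no-gaps lower bound $\|\Bv_{I_j}-a\1\|_2 \ge c_n$ from Corollary \ref{cor:lap_incompressible} (which governs the normalizations appearing under rescaling) and the design choice $\gamma' = \gamma/c_n$, which is exactly what is needed to absorb the loss incurred by restricting to a subvector. Concretely, I set $\alpha := \|\Bv_I\|_2$ and $\theta := \LCD_{\kappa,\gamma}(\Bv_I)$ and use Definition \ref{def:LCD} to produce $p\in\Z^{n_0}$ with $\|(\theta/\alpha)\Bv_I - p\|_2 \le \min(\gamma\theta,\kappa)$; restricting coordinates to each block $I_j$ immediately yields $\|(\theta/\alpha)\Bv_{I_j} - p_{I_j}\|_2 \le \min(\gamma\theta,\kappa)$.

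Next, for each $j$ I will set $b_j := \mathrm{round}(\bar p_j)\in\Z$, where $\bar p_j$ is the mean of $p_{I_j}$, and define $a_j^\ast := b_j\alpha/\theta$. The algebraic identity
\[
(\theta/\alpha)\Bv_{I_j} - p_{I_j} \;=\; (\theta/\alpha)(\Bv_{I_j} - a_j^\ast\1) \;-\; (p_{I_j} - b_j\1),
\]
combined with $p_{I_j}-b_j\1\in\Z^{|I_j|}$, displays $\Bv_{I_j}-a_j^\ast\1$ as integer-approximated at scale $\theta_j^\ast := (\theta/\alpha)\|\Bv_{I_j}-a_j^\ast\1\|_2$ with error at most $\min(\gamma\theta,\kappa)$. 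Elementary estimates show $|a_j^\ast|\le |\bar a_j|+\alpha/(2\theta)\le n^{-1/2}\log^2 n$, so $a_j^\ast$ sits in the range demanded by $\CA$; snapping to the nearest $a_j\in\CA$ costs only $(\theta/\alpha)\sqrt m/(2n^{C+1})$ additional error, negligible since $C\gg A$. Dividing by $\|\Bv_{I_j}-a_j\1\|_2$ recasts the resulting bound as a $\LCD_{\kappa,\gamma'}$-witness, provided $\min(\gamma\theta,\kappa)\le\min(\gamma'\theta_j^\ast,\kappa)$. The $\kappa$-branch is automatic; the nontrivial branch reduces to $\|\Bv_{I_j}-a_j\1\|_2 \ge c_n\alpha$, which follows from Corollary \ref{cor:lap_incompressible} (giving $\|\Bv_{I_j}-a_j\1\|_2\ge c_n$) together with $\alpha\le 1$. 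Hence $D_j\le \theta_j^\ast$ for every $j$, and taking the maximum in $j$ together with $\alpha\ge c_n$ (Corollary \ref{cor:lap_incompressible} applied to $I$ itself) gives the desired bound relating $D(\Bv_I)$ to $\LCD_{\kappa,\gamma}(\Bv_I)$.

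The main obstacle I expect is a joint discretization issue. The family of ``aligned'' shifts $\{b\alpha/\theta : b\in\Z\}$ (which make the $\Z^{|I_j|}$ structure of $p_{I_j}-b\1$ visible) and the grid $\CA$ (on which $a_j$ is required to live) are incommensurable, with spacings $\alpha/\theta \in [n^{-A-1/2},n^{-A}]$ versus $n^{-(C+1)}$; one must verify that the snap $a_j^\ast\mapsto a_j$ does not blow up the integer-approximation error beyond the allowed $\min(\gamma'\theta_j^\ast,\kappa)$, and simultaneously that the resulting $\|\Bv_{I_j}-a_j\1\|_2$ is small enough to produce the prefactor $c_n^{-1}$ claimed. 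The choice $\kappa = n^c$ with $c$ small positive and the liberty to take $C$ very large (as in \S\ref{sub:approx}) provides just enough slack for the snap error to be absorbed into the $\kappa$-branch, and the fact that there are only $k=\lfloor\sqrt{\log\log n}\rfloor$ blocks keeps the total loss across all $j$ polylogarithmic and therefore harmless for the union-bounding arguments to follow.
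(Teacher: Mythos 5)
Your construction is correct in spirit and does yield an inequality of the form $D(\Bv_I)\le n^{o(1)}\LCD_{\kappa,\gamma}(\Bv_I)$, which is all that the downstream argument actually uses. However, it is a genuinely different and considerably more elaborate route than the paper's. The paper's own proof is a one-liner: it picks the block $I_{j_0}$ achieving the maximum, notes $D_{j_0}\le \LCD_{\kappa,\gamma'}(\Bv_{I_{j_0}}/\|\Bv_{I_{j_0}}\|_2)$ because the minimum over $a\in\CA$ is taken at $a=0$ among other values, and then invokes Fact~\ref{LCD:scaling} (the subvector LCD scaling fact) together with the norm bounds $\|\Bv_{I_{j_0}}\|_2\ge c_n$, $\|\Bv_I\|_2\le 1$. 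You instead re-derive the content of Fact~\ref{LCD:scaling} by hand, transporting the witness $p\in\Z^{n_0}$ block by block. In the process you introduce the careful choice $b_j=\mathrm{round}(\bar p_j)$, $a_j^\ast=b_j\alpha/\theta$, which then forces you to address the snapping of $a_j^\ast$ to the grid $\CA$ — the ``joint discretization issue'' you flag as the main obstacle. That obstacle is self-inflicted: taking $b_j=0$ (hence $a_j=0\in\CA$, no snapping) already gives $\dist\bigl((\theta/\alpha)\Bv_{I_j},\Z^{|I_j|}\bigr)\le\min(\gamma\theta,\kappa)\le\min(\gamma'\theta_j^{\ast\ast},\kappa)$ with $\theta_j^{\ast\ast}=(\theta/\alpha)\|\Bv_{I_j}\|_2$, whence $D_j\le\theta_j^{\ast\ast}\le\theta/\alpha\le c_n^{-1}\theta$ and you are done without any rounding machinery. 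This is exactly what Fact~\ref{LCD:scaling} encapsulates.

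One thing worth flagging about the \emph{statement} rather than either proof: neither your argument nor the paper's actually delivers the prefactor $c_n^{-1}$ on the left-hand side as written. The scaling fact gives $D(\Bv_I)\le\frac{\|\Bv_{I_{j_0}}\|_2}{\|\Bv_I\|_2}\LCD_{\kappa,\gamma}(\Bv_I)$, and since $\|\Bv_{I_{j_0}}\|_2/\|\Bv_I\|_2\in[c_n,1]$ the correct conclusion is $c_n D(\Bv_I)\le\LCD_{\kappa,\gamma}(\Bv_I)$, i.e.\ the factor on the left should be $c_n$, not $c_n^{-1}$ (a stronger inequality than what the norm ratio supports). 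Your own chain, $D_j\le\theta_j^\ast=(\theta/\alpha)\|\Bv_{I_j}-a_j\1\|_2$ with $\alpha\ge c_n$ and $\|\Bv_{I_j}-a_j\1\|_2\in[c_n,\log^4 n]$ (cf.\ \eqref{eqn:v_{j}:bound}), likewise gives at best $D(\Bv_I)\le c_n^{-1}(\log n)^{O(1)}\LCD_{\kappa,\gamma}(\Bv_I)$, i.e.\ the harmless $c_n$-type relation. Since $c_n=n^{-o(1)}$, the discrepancy is immaterial for the use made of the claim (passing from the event $\LCD_{\kappa,\gamma}(\Bv_I)\le n^A$ to $D(\Bv_I)$ lying in one of polynomially many dyadic ranges below $n^{A+o(1)}$), but you should record the factor you actually prove rather than match the displayed $c_n^{-1}$.
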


\begin{proof} Assume $D(\Bv_{I}) =D$ is achieved at $\Bv_{I_{1}}$ with the corresponding shift $a_{1}\in \CA$. Then by Fact \ref{LCD:scaling}, as $1\le \|\Bv_{I}\|_{2}/\|\Bv_{I_{1}}\|_{2} \le c_{n}^{-1}$,
$$\LCD_{\kappa,\gamma}(\Bv_{I}) \ge c_{n}^{-1} \LCD_{\kappa,\gamma'}(\Bv_{1}) \ge c_{n}^{-1}\LCD_{\kappa,\gamma'}(\Bv_{1}-a_{1}).$$
\end{proof}

As a consequence of the above claim, as $c_{n}=n^{{-o(1)}}$, instead of \eqref{eqn:v_{I}:start} we will be working with the event (recalling \eqref{eqn:D:v_{I}})
\begin{equation}\label{eqn:v_{I}:start'}
D(\Bv_{I}) = \max_{j} D_{j} \in [D_{0},2D_{0}),
\end{equation}
for some $n^{1/2-o(1)} \le D_{0} \le n^{A}$.

Note that within this event, by definition we have every $D_{j} \ll D_{0}$, and hence we can $O(\kappa/D)$-approximate all the subvectors $\Bv_{j}-a_{j}\1$ (for some $a_{j}\in \CA$). Note that the norm of each of the vector $\Bv_{I_{j}}-a_{j}\1$ is at least $c_{n}$ by  \eqref{eqn:v_{j}:bound}, so by Lemma \ref{lemma:nets}, by gluing the nets of the subvectors together, we obtain a $O(\kappa k /D_{0})$-net $\CN_{I}$ of size $(C_{0}c_{n}^{-1}D_{0}/\sqrt{m})^{km} |\CA|^{k}$ for the vectors $\Bv_{I}$ where \eqref{eqn:v_{I}:start'} holds.

Now, for a fixed $\Bv_I$ in $\CN_{I}$, consider the event that there exists $\Bv_{I^{c}} \in \R^{n-n_{0}}$ (of norm of order 1, more specifically $\Bv_{I^{c}} \in \frac{1}{n^{C+1}} \Z^{n-n_{0}}$) so that 
\begin{equation}\label{eqn:LCD:eps}
\|(\wb{L}_{n} -\la_{0})\Bv\|_{2} \le \sqrt{n} \log n \times \kappa k /D_{0} =:\eps.
\end{equation}
If we let  $M_{11}, M_{12}, M_{21}$ and $M_{22}$ be the submatrices of row and columns indexed from $I$ and $I^{c}$ (for instance $M_{22}$ is a square matrix of size $|I^{c}|=(n-n_{0})$), then we can rewrite the above as
\begin{equation}\label{eqn:I,I^{c}:1}
\|(M_{11}\Bv_I, M_{21}\Bv_I) + (M_{12}\Bv_{I^{c}}, M_{22}\Bv_{I^{c}}) \|_{2} \le \eps.
\end{equation}
In particular, $\|M_{21} \Bv_I + M_{22} \Bv_{I^{c}}\|_{2}\le \eps$.

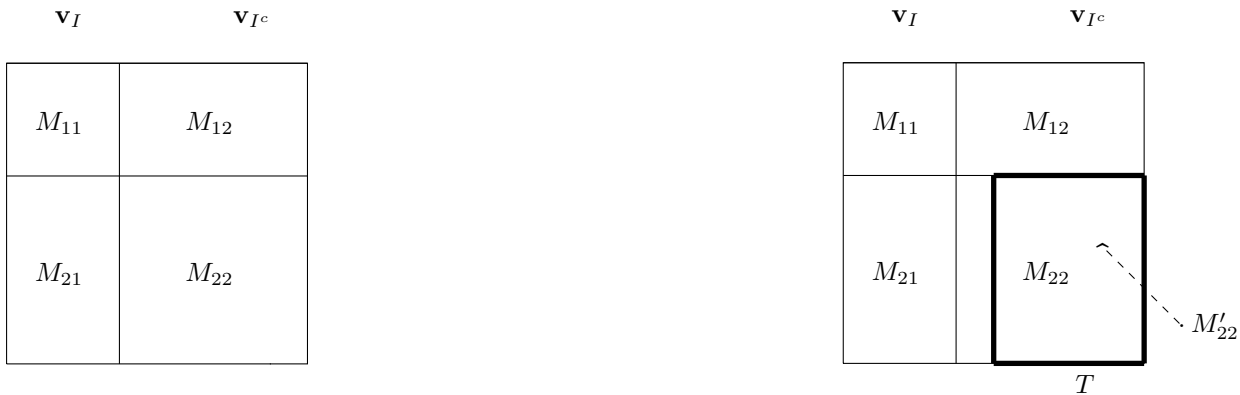
\begin{figure}
\centering

\begin{tikzpicture}
\draw (0,0) -- (4,0) -- (4,4) -- (0,4) -- (0,0);
\draw (1.5,0) -- (1.5,4);
\draw (0,2.5) -- (4,2.5);
\draw (0.7,3.2)  node{$M_{11}$};
\draw (2.7,3.2)  node{$M_{12}$};
\draw (3.5,0)--(3.5,0) node[below  left =.15in]{ };
\draw (0,4) -- (1.5,4)  node[above  left =.2in]{$\Bv_{I}$};
\draw (1.5,4) -- (4,4)  node[above  left =.2in]{$\Bv_{I^{c}}$};
\draw (2.7,1.2)  node{$M_{22}$};
\draw (0.7,1.2)  node{$M_{21}$};

\end{tikzpicture}
\hfill
\begin{tikzpicture}
\draw (0,0) -- (4,0) -- (4,4) -- (0,4) -- (0,0);
\draw (1.5,0) -- (1.5,4);
\draw (0,2.5) -- (4,2.5);
\draw (0.7,3.2)  node{$M_{11}$};
\draw (2.7,3.2)  node{$M_{12}$};
\draw [line width=2pt](2,0) -- (2,2.5);
\draw [line width=2pt](4,0) -- (4,2.5);
\draw [line width=2pt](2,2.5) -- (4,2.5);
\draw [line width=2pt] (2,0) -- (4,0);
\draw (3.5,0)--(3.5,0) node[below  left =.02in]{$T$};

\draw (0,4) -- (1.5,4)  node[above  left =.2in]{$\Bv_{I}$};
\draw (1.5,4) -- (4,4)  node[above  left =.2in]{$\Bv_{I^{c}}$};
\draw (2.7,1.2)  node{$M_{22}$};
\draw (0.7,1.2)  node{$M_{21}$};
\draw  [dashed] (3.5,1.5)   -- (4.5,0.5) node{.} node[right]{$M_{22}'$};
\draw (3.45,1.5) node{\textbf{\large \textasciicircum}};
\end{tikzpicture}

\caption{Finding non-structured subvectors.}
\label{figure:local}
\end{figure}

In what follows, let 
\begin{equation}\label{eqn:d_{n}}
k_0:=d_{n}(n-n_{0}), \mbox{ where } d_{n}= \left(\frac{1}{\log n}\right)^{2}.
\end{equation}

We next apply Theorem \ref{thm:lap_overcrowding} and Lemma \ref{lem:lowerbound} to the matrix $M_{22}$: with probability at least $1- e^{- \Theta(k_0^{3/2}/\log n)}$, there exists $T \subset I^{c}$ of size $|T| =n-n_{0}-k_{0}$ such that the matrix $M_{22}|_{T}$ is near isometry in the sense that 
$$\sigma_{n-n_{0}-k_{0}}(M_{22}|_{T}) \gg d_{n}\sqrt{n-n_{0}}.$$
Let $M_{22}'$ be an $(n-n_{0}-k_{0}) \times (n-n_{0})$ matrix where $M_{22}' M_{22}|_T = I_{n-n_{0}-k_{0}}$ (i.e. left inverse). By the above, we can assume 
$$\|M_{22}'\|_{2} =O(1/d_{n}\sqrt{n-n_{0}}) = O(1/d_{n}\sqrt{n}).$$
Then, from the equation $\|M_{21}\Bv_I + M_{22}\Bv_{I^{c}} \|_{2} \le \eps$ coming from \eqref{eqn:I,I^{c}:1}, we apply $M_{22}'$, which yields
$$\|M_{22}'M_{21}\Bv_I + M_{22}' M_{22}|_{T^c} \Bv_{I^{c}}|_{T^c} +\Bv_{I^{c}}|_T \|_{2} \le \eps \|M_{22}'\|_{2} \le \eps /d_{n}\sqrt{n}.$$
In other words, if we fix a realization of $M_{21}$ (and hence $M_{12}$) and $M_{22}$, and if we fix $T$ and a choice of $\Bv_{I^{c}}|_{T^c}$, then $\Bv_{I^{c}}|_T$ lies within a distance of at most $\eps/d_{n}\sqrt{n}$ to $-M_{22}'M_{21}\Bv_I - M_{22}' M_{22}|_{T^c} \Bv_{I^{c}}|_{T^c}$.
Using this approximation in our other equation yields
$$\Big\|M_{12}\Bv_{I^{c}} -  M_{12}'\Bv_{I^{c}}|_{T^{c}} -  M_{12}'' (M_{22}'M_{21}\Bv_I + M_{22}' M_{22}|_{T^c} \Bv_{I^{c}}|_{T^c})\Big\|_{2} \le  \|M_{12}\|_{2} (\eps /\sqrt{n}) =O(  \eps/d_{n}),$$
where we recall that under $\CE_{i.i.d.-norm}$, $\|M_{12}\|_{2} = O(\sqrt{n})$. Together with the first half of \eqref{eqn:I,I^{c}:1}, $\|M_{11} \Bv_I + M_{12} \Bv_{I^{c}}\|_{2}\le \eps$, by the triangle inequality we have
\begin{equation}\label{eqn:v_{I^{c}}}
\Big \|M_{11}\Bv_I -M_{12}'\Bv_{I^{c}}|_{T^{c}} -  M_{12}'' (M_{22}'M_{21}\Bv_I + M_{22}' M_{22}|_{T^c} \Bv_{I^{c}}|_{T^c}) \Big \|_{2} =O( \eps/d_{n}). 
\end{equation}
Note that $M_{11}$ has size $n_{0} \times n_{0}$, and recall the definition of $\eps$ from \eqref{eqn:LCD:eps}. We will bound the probability with respect to the off-diagonal entries of $M_{11}$, while other entries of $\wb{L}_{n}$ are held fix. More specifically, we can write the above as 
$$\|(M_{11}-F)\Bv_{I}\|_{2} = O((\sqrt{n} \log n) \kappa k/D_{0} d_{n}) =O(\sqrt{n} \rho),$$ 
where $F$ is a deterministic matrix, and where for brevity we write
$$\rho:= (d_{n}^{-1} \log n \times \kappa k)/D_{0}.$$

\begin{figure}

\centering

\begin{tikzpicture}
\draw (0,0) -- (4,0) -- (4,4) -- (0,4) -- (0,0);
\draw (0,1) -- (4,1);
\draw (3,0) -- (3,4);
\draw (0,4) -- (4,0);
\draw (2,2) node[left=0.1in]{$ \sum_{j;j\neq i} x_{ij}-\la_{0}$} ;
\draw (2,4.5) node[above]{$v_{i}$} ;

\foreach \Point in {(2,2),(2,4.5)}{
    \node at \Point {\textbullet};
};

\draw (0,4) -- (1.5,4) node[above]{$I\bs I_{j_{0}}$};
\draw (3,4) -- (3.5,4) node[above]{$I_{j_{0}}$};
\draw [fill=black] (3,1) -- (4,1) -- (4,4) -- (3,4);
\draw (4,1)--(4,2.5) node[right=0.1in]{$L_{(I\bs I_{j_{0}}) \times I_{j_{0}}}$}; 
\end{tikzpicture}
\caption{Decomposition into i.i.d.~parts.}
\label{figure:rec}
\end{figure}
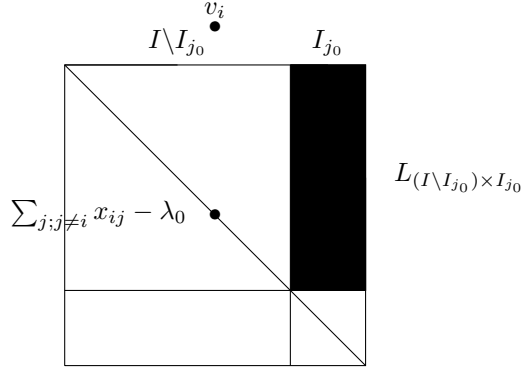

 Assume that $D(v_{I})$ is achieved at the interval $I_{j_{0}}$. Then by the rectangular decomposition trick (see Figure \ref{figure:rec}), we can pass to bounding the event  
 $$\|(M_{11})_{(I \bs I_{j_{0}}) \times I_{j_{0}}} \Bv_{I_{j_{0}}} - \Bw\|_{2} =O( \sqrt{n}\rho)$$ for some deterministic vector $\Bw$, where the randomness now is from the entries of $(M_{11})_{(I \bs I_{j_{0}})}$, which are all independent. 
 
 Next, for each $i\in I \bs I_{j_{0}}$ where $|v_{i}| \le n^{{-1/2}} \log^{2}n$, we consider the event $|\row_{i}(M_{11})_{(I \bs I_{j_{0}}) \times I_{j_{0}}} \cdot \Bv_{I_{j_{0}}} -w_{i}| =O(\rho) $, which has the form 
 $$|x_{1} (v_{i_{1}}-v_{i})+\dots+x_{m}(v_{i_{m}}-v_{i}) -f| = O(\rho), $$ 
where $x_{1},\dots, x_{m}$ are independent Bernoulli of parameter $p$, and where we recall that $km=n_{0}$ (with $k =\lfloor \sqrt{\log \log n}\rfloor$ and $n/k \le n_{0} \le n$), $i_{1},\dots, i_{m}$ are the elements of $I_{j_{0}}$, and $f$ might depend on $v_{i_{j}}$ but not on the $x_{i}$. 
 
We note that as $\sum_{i\in I_{j_{0}}} v_{i}^{2} \le 1$, the set indices $i$ where $|v_{i}| \le n^{{-1/2}} \log^{2}n$ has cardinality at least $n_{0}-m-n/\log^{4}n$. By Claim \ref{claim:LCDast:1}, as $D_{j_{0}} = D(\Bv_{I}) \in [D_{0}, 2D_{0}]$ and as $\rho \ge 1/D_{0}$,
$$\P\Big(|x_{1} (v_{i_{1}}-v_{i})+\dots+x_{m}(v_{i_{m}}-v_{i}) -f| =O(\rho)\Big)= O(\rho \log^{{O(1)}}n)).$$
Therefore, by Lemma \ref{lem:tensorization}, we have
$$\P\Big(\|(M_{11})_{(I \bs I_{j_{0}}) \times I_{j_{0}}} \Bv_{I_{j_{0}}} - \Bw\|_{2} =O( \sqrt{n}\rho\Big)  \le (\rho \log^{{O(1)}}n)^{n_{0}-m-n/\log^{4}n}.$$

Summing over $(C_{0}c_{n}^{-1}D_{0}/\sqrt{m})^{n_{0}} |\CA|^{k}$ choices for the vector $\Bv_{I}$, over $(n^{C+1})^{d_{n} (n-n_{0})} \times \binom{n-n_{0}}{d_{n}(n-n_{0})}$ choices for the vector $\Bv_{I^{c}}|_{T^c}$, and over $\binom{n}{n_{0}}$ choices for $I$, we obtain the final union bound
\begin{align*}
&(\rho \log^{{O(1)}}n)^{n_{0}-m-n/\log^{4}n} (C_{0}c_{n}^{-1}D_{0}/\sqrt{m})^{n_{0}} |\CA|^{k} (n^{C+1})^{d_{n} (n-n_{0})} \binom{n-n_{0}}{d_{n}(n-n_{0})} \binom{n-n_{0}}{d_{n}(n-n_{0})} \binom{n}{n_{0}} \\
\le & (\kappa \log^{O(1)}n/D_{0})^{n_{0}-m-m/\log^{2C}n} \times (n^{o(1)}D_{0}/\sqrt{m})^{n_{0}} \times n^{n/\log n} \times (C')^{n} = n^{-\Theta(n_{0})},
\end{align*}
provided that $\kappa =n^{c}$ with a sufficiently small constant $c$.

\section{Simplicity of the Spectrum: Proof of Theorems \ref{thm:gap:lap'} and \ref{thm:gap:lap}}\label{section:gap}

We first work with the centered model. 

\begin{proof}[Proof of Theorem \ref{thm:gap:lap'}] 
Let $\wb{L}_{n}'$ be obtained from $\wb{L}_{n}$ by changing the last two columns (and rows respectively) $\col_{n-1},\col_{n})$ of $\wb{L}_{n}$ to $(\col_{n-1}+\col_{n})/\sqrt{2}$ and $(-\col_{n-1}+\col_{n})/\sqrt{2}$. We observe the followings.

\begin{lemma} The matrix $\wb{L}_{n}'$ has the same spectrum as of $\wb{L}_{n}$.
\end{lemma}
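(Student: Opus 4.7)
The plan is to exhibit an orthogonal matrix $Q$ such that $\wb{L}_n' = Q^T \wb{L}_n Q$, which immediately implies equality of spectra since conjugation by an orthogonal matrix is a similarity transformation.

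Specifically, I would define
\[
R := \frac{1}{\sqrt{2}}\begin{pmatrix} 1 & -1 \\ 1 & 1 \end{pmatrix}, \qquad Q := \begin{pmatrix} I_{n-2} & 0 \\ 0 & R \end{pmatrix}.
\]
A quick check shows $R^T R = I_2$, so $Q$ is orthogonal. Right-multiplication of $\wb{L}_n$ by $Q$ leaves the first $n-2$ columns unchanged and replaces the last two columns $(\col_{n-1},\col_n)$ by the two linear combinations $((\col_{n-1}+\col_n)/\sqrt{2},\,(-\col_{n-1}+\col_n)/\sqrt{2})$, precisely matching the prescribed column transformation in the definition of $\wb{L}_n'$. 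By symmetry, left-multiplication by $Q^T$ performs the analogous transformation on the rows, replacing $(\row_{n-1}, \row_n)$ by $((\row_{n-1}+\row_n)/\sqrt{2},\,(-\row_{n-1}+\row_n)/\sqrt{2})$.

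To finish, I would verify by direct block computation that $Q^T \wb{L}_n Q$ has exactly the row and column structure defining $\wb{L}_n'$ (in the $(n-2)\times(n-2)$ upper-left block $\wb{L}_n$ is unchanged; the off-diagonal blocks are transformed by $R$ on the appropriate side; and the $2\times 2$ lower-right block becomes $R^T \wb{L}_n[\{n-1,n\},\{n-1,n\}] R$, which is exactly the $2\times 2$ block of $\wb{L}_n'$). Since $Q$ is orthogonal, $Q^{-1} = Q^T$, so $\wb{L}_n' = Q^{-1}\wb{L}_n Q$ is similar to $\wb{L}_n$ and hence has the same spectrum.

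There is essentially no obstacle here: the only thing to double-check is the sign convention (i.e., whether the scalings $1/\sqrt{2}$ and the $\pm$ signs line up with $R$ as opposed to $R^T$), which is a matter of bookkeeping. The underlying principle is the elementary fact that a $2\times 2$ rotation acting simultaneously on rows and columns of a symmetric matrix produces an isospectral symmetric matrix.
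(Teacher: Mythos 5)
Your proof is correct and is essentially identical to the paper's: the matrix $R$ you define is precisely the paper's $U_2$, and both arguments conjugate $\wb{L}_n$ by the orthogonal block matrix $\operatorname{diag}(I_{n-2}, R)$ and conclude by similarity. No differences worth noting.
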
 
\begin{proof} Let 
\[
U_2 =\begin{pmatrix} 1/\sqrt{2} & -1/\sqrt{2} \\ 1/\sqrt{2} &1/\sqrt{2} \end{pmatrix}.
\] 
Note that $\begin{pmatrix} I_{n-2}& 0 \\ 0 &  U_2 \end{pmatrix}$ is orthogonal, and so 
\[
\wb{L}_{n}'= \begin{pmatrix} I_{n-2}& 0 \\ 0 &  U_2 \end{pmatrix}^T \wb{L}_{n}\begin{pmatrix} I_{n-2}& 0 \\ 0 &  U_2 \end{pmatrix}
\]
has the same spectrum as $\wb{L}_{n}$.  
\end{proof}
Therefore, it suffices to prove Theorem \ref{thm:gap:lap'} for $\wb{L}'_{n}$.  

Our next lemma is the following. 
\begin{lemma}\label{lemma:L_n-1_prime} The conclusion of Theorem \ref{thm:non-gap:lap} and \ref{thm:LCD:lap} holds for the eigenvectors of all principals $\wb{L}'_{n-1}$ of $\wb{L}'_{n}$ of size $(n-1) \times (n-1)$.
\end{lemma}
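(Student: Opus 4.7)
The plan is to reduce each $(n-1)\times(n-1)$ principal $P_i := (\wb{L}'_n)_{[n]\setminus\{i\}}$ to a matrix of the same structural form as $\wb{L}'_{n-1}$ up to a deterministic diagonal perturbation, and then observe that the proofs of Theorems \ref{thm:non-gap:lap} and \ref{thm:LCD:lap} are robust to such perturbations. By the exchangeability of vertices $1,\ldots,n-2$ in the Erd\H{o}s--R\'enyi model, there are only three distributionally distinct cases: $i\le n-2$, $i=n-1$, and $i=n$. In each case, conditioning on the edges incident to the removed vertex $i$ (and, when $i\in\{n-1,n\}$, on the choice of which combined column plays the role of the ``last'' one), the matrix $P_i$ splits as a matrix with exactly the distribution of $\wb{L}'_{n-1}$ on the induced subgraph on the remaining $n-1$ vertices, plus a diagonal matrix $D_i$ that is measurable with respect to the conditioning and hence acts as a deterministic perturbation for the remaining randomness.

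Next I would verify that the arguments of Sections \ref{sec:nogaps} and~6 go through verbatim when $\wb{L}'_{n-1}$ is replaced by $\wb{L}'_{n-1}+D$ for an arbitrary deterministic diagonal $D$ of norm $O(\sqrt{n\log n})$. The key inputs are all perturbation-insensitive: (i) the norm bound Lemma \ref{lemma:norm:lap} trivially absorbs a bounded deterministic shift; (ii) the overcrowding estimate Theorem \ref{thm:lap_overcrowding} is already stated for $L_n+F$ with an arbitrary deterministic $F$; and (iii) the row-wise small ball bounds driven by Theorem \ref{theorem:LO} are quoted in the forms $\rho_r$ and $\rho_{L,r}$, each of which takes a supremum over an arbitrary shift $a\in\R$, so any deterministic contribution from $D_i$ or from the conditioned entries is harmlessly swept into $a$. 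The net-counting Lemma \ref{lemma:nets} and the reshuffling device are unaffected, and the approximate-eigenvector reduction of Subsection \ref{sub:approx} only requires that $\|(P_i-\lambda_0)\Bv\|_2$ be small for some $\lambda_0$, which is equivalent to the analogous statement after absorbing $D_i$.

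The one point where I expect to be mildly careful is the case $i\in\{n-1,n\}$: here the ``combined'' last row/column of $P_i$ has the form $(\col_{n-1}\pm\col_n)/\sqrt{2}$ restricted to $n-1$ entries rather than $n-2$, and the bulk independent block has size $(n-2)\times(n-2)$ instead of $(n-3)\times(n-3)$. Neither change affects the independence structure that drives the proofs: the at most two special rows/columns contribute only $O(n)$ additional entropy from their possible assignments, which is dominated by the main probability bound $n^{-\Theta(n_0)}$ obtained in Section~6, and the bulk block is still large enough for the overcrowding and small-ball arguments. Taking a final union bound over the $n$ choices of $i$ inflates the probability by a factor of $n$, still leaving $n^{-\omega(1)}$, so the conclusions of Theorems \ref{thm:non-gap:lap} and \ref{thm:LCD:lap} hold simultaneously for every $(n-1)\times(n-1)$ principal of $\wb{L}'_n$.
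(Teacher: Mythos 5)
The paper offers no argument here --- it merely asserts that the proofs of Theorems \ref{thm:non-gap:lap} and \ref{thm:LCD:lap} carry over verbatim --- so your proposal supplies exactly the omitted verification, along the lines the authors must have intended. One small correction for $i\le n-2$: the perturbation $D_i$ is block-diagonal rather than diagonal. Since deleting index $i$ commutes with the rotation $U$ on coordinates $n-1$ and $n$, one has
\[
P_i \;=\; U'^{T}\,\wb{L}_{n-1}\!\left(G[V\setminus\{i\}]\right)U' \;+\; U'^{T}\,\tilde D_i\, U', \qquad \tilde D_i = \diag\!\left(a_{ji}-p\right)_{j\ne i},
\]
and the conjugated term $U'^{T}\tilde D_i U'$ is diagonal on $[n-2]\setminus\{i\}$ but carries a full symmetric $2\times 2$ block on the two rotated coordinates. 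This is harmless: your robustness points (ii) and (iii) --- the arbitrary deterministic $F$ permitted in Theorem \ref{thm:lap_overcrowding}, and the suprema over shifts in $\rho_{r}$ and $\rho_{L,r}$ --- never invoke diagonality of the perturbation, only that it is deterministic given the conditioning on edges incident to $i$ and of $O(1)$ operator norm, so your conclusion stands once the statement is softened to ``block-diagonal.'' Your treatment of the residual cases $i\in\{n-1,n\}$, where after conditioning one special row/column of $P_i$ becomes deterministic and the bulk independent block still has size $(n-2)\times(n-2)$, and the concluding union bound over the $n$ choices of $i$, are both in order.
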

\begin{proof} The proof of this result is identical to the proofs of Theorems \ref{thm:non-gap:lap} and \ref{thm:LCD:lap}. Therefore, we omit the details.
\end{proof}
Let $\Bu = \left(\begin{array}{c} \Bw \\ b\end{array} \right)$ be an eigenvector of $\wb{L}'_{n}$ with associated eigenvalue $\lambda_i(\wb{L}'_{n})$, where \( \Bw \in \R^{n-1} \).  By Theorem \ref{thm:non-gap:lap} and Claim \ref{claim:non-gap:spread}, we assume that $|b| \ge n^{{-1/2-o(1)}}$ (as these results imply that with probability $1-n^{-\omega(1)}$ the set of indices where $|b| \ge n^{{-1/2-o(1)}}$ is of size $\Theta(n)$).

We consider the decomposition 
\[
\wb{L}'_{n} = \left( \begin{array}{cc}
    \wb{L}'_{n-1} & \col_{n}(\wb{L}'_{n}) \\
    \col_{n}(\wb{L}'_{n})^T & f \end{array} \right),
\]
where $\col_{n}(\wb{L}'_{n})$ is a $(n-1) \times 1$ vector \footnote{We slightly abused our standard notation, here $\col_{n}$ is not exactly the last column of $\wb{L}'_{n}$ but only a subvector.} and $f \in \R$. Arguing as in the discussion following Theorem \ref{thm:gap:lap'}, let $\Bv$ be a unit vector of $\wb{L}'_{n-1}$ corresponding to $\la_{i}(\wb{L}'_{n-1})$. We have
$$|\Bv \cdot \col_{n}(\wb{L}'_{n})| \le n^{{o(1)}}\delta.$$

Let $I_{n-2}$ be the set of vertices $v_1,\dots, v_{n-2}$ which has exactly one neighbor with $v_n$ and $v_{n-1}$. Then the neighbor switching process changes the coordinates over $I_{n-2}$ of $\col_{n}$ randomly and independently. Furthermore, with probability $1- \exp(-\Theta(n))$, we know that 
$$ n/4 - n/8\le |I_{n-2}| \le n/4 + n/8.$$
Conditioned on the event of probability $1-n^{-\omega(1)}$ from Theorem \ref{thm:LCD:lap} (applied to $\wb{L}'_{n-1}$) that $\LCD_{\kappa,\gamma}(\Bv_{I_{n-2}}) \ge n^{2A}$, by Theorem \ref{thm:small_ball_prob_LCD}, for any $\delta \ge n^{-A}$ we have that 
$$\P_{\col_{n}(\wb{L}'_{n})}(|\Bv \cdot \col_{n}(\wb{L}'_{n})| \le n^{o(1)}\delta) \le \sup_{b} \P_{\col_{n}(\wb{L}'_{n})}(|\Bv_{I_{n-2}} \cdot \col_{n}(\wb{L}'_{n})_{I_{n-2}}-b| \le n^{o(1)}\delta)=O( n^{o(1)}\delta).$$
\end{proof}

We next conclude with a proof of the non-centered model, where we recall that in our notation, the real eigenvalues of a symmetric matrix $M_{n}$ are arranged as $\la_{1}(M_{n}) \le \dots \le \la_{n}(M_{n})$.

\begin{proof}[Proof of  Theorem \ref{thm:gap:lap}]
    Now that we have established the result for $\wb{L}_{n}$, we translate the result back to $L_{n}$. Assume that the spectral decomposition of $\wb{L}_{n}$ is (with $\la_{1}\le \dots \le \la_{n}$)
    $$\wb{L}_{n} = \sum_{i} \la_{i} \Bv_{i}^{T} \Bv_{i},$$
    where one of the eigenvalues is $\la_{i_{0}}=0$, with $\Bv_{i_{0}} = \wb{\1} = \1/\sqrt{n}$, and where the eigenvectors $\Bv_{i}$ form an orthonormal basis. Note that we have given effective gaps between the $\la_{i}$.
    
      Observe that $\E L_{n} = -p J_{n} + pn I_{n}$, where $J_{n}$ is the $n \times n$ matrix of ones. So 
      $$L_{n} = \wb{L}_{n} + \E L_{n}= \sum_{i} \la_{i} \Bv_{i}^{T} \Bv_{i} - p J_{n} +  pn I_{n} = \sum_{i \neq i_{0}} (\la_{i}+pn) \Bv_{i}^{T} \Bv_{i}  + (pn - pn)\wb{\1}^{T}\wb{\1}.$$
Since $L_{n}$ is positive semidefinite, the zero eigenvalue of $L_{n}$ is the smallest eigenvalue $\la_{1}(L_{n})$, with eigenvector $\wb{\1}$ \footnote{Also, $L_{n}$ and $\wb{L}_{n}$ have the same set of eigenvectors.}. So for Theorem \ref{thm:gap:lap}, it suffices to bound the gap of the two smallest eigenvalues of $L_{n}$, $\la_{2}(L_{n}) - \la_{1}(L_{n}) = \la_{2}(L_{n})$.

Note that $L_{n} = D_{n}-A_{n}$.  We can view $A_n$ as a perturbation of a $\E A_n = p J - p I$, which has eigenvalues $-p$ with multiplicity $n-1$ and $(p-1) n$ with multiplicity one.  By classical bounds on the norm of a Wigner matrix, we have that $\|A_{n} - \E A_{n}\| = O_p(\sqrt{n})$ with probability $1 - \exp(-\Theta(n))$.  Therefore, by Weyl's inequalities,
\[
\lambda_n(A_{n}) - \lambda_{n-1}(A_{n}) \geq pn/2
\] 
with probability at least $1-\exp(-\Theta(n))$.  Thus, on this event
    \[
    \lambda_{2}(\E D_{n} - A_{n}) - \lambda_{1}(\E D_{n} - A_{n}) \geq pn/2.
    \]
 Finally, by Weyl's bound, the spectral gap $\la_{2}(L_{n}) - \la_{1}(L_{n})$  of $L_{n}$ is then at least 
    \[
    pn/2 - 2 \|D_{n} - \E D_{n}\|.
    \]
    Chernoff's bound and a simple union bound tells us that $\|D_{n} - \E D_{n}\|_{2} \leq pn/10$ with probability at least $1 - \exp(-\Theta(n))$.  Therefore, with probability at least $1 - \exp(-\Theta(n))$, $\lambda_{2}(L_{n}) \geq pn/3$, which is well above the gap size and probability proved for $L_{n}$.

\end{proof}

\section{Proof of Theorem \ref{thm:smallcoordinates}}
\begin{proof}(of  Theorem \ref{thm:smallcoordinates}) As $\wb{L}_n$ and $L_{n}$ have the same set of eigenvectors, it suffices to work with $\wb{L}_{n}$. We consider the decomposition 
\[
\wb{L}_{n} = \left( \begin{array}{ccc}
	\wb{L}_{n-2} & \col_{n-1}(\wb{L}_{n}) & \col_{n}(\wb{L}_{n}) \\
	\col_{n-1}(\wb{L}_{n})^T & f & g \\
	\col_{n}(\wb{L}_{n})^T & g & h 
\end{array} \right),
\]
where $\col_{n-1}(\wb{L}_{n})$ and $\col_{n}(\wb{L}_{n})$ are $(n-2) \times 1$ vectors \footnote{As in the previous section, here $\col_{n-1}, \col_{n}$ are not exactly the last two columns of $\wb{L}_{n}$.} and $f, g, h \in \R$. We slightly modify the argument following Theorem \ref{thm:gap:lap'}.
We consider the eigenvalue equation,
\begin{equation} \label{eq:n-2minor}
\left( \begin{array}{ccc}
	\wb{L}_{n-2} & \col_{n-1}(\wb{L}_{n}) & \col_{n}(\wb{L}_{n}) \\
	\col_{n-1}(\wb{L}_{n})^T & f & g \\
	\col_{n}(\wb{L}_{n})^T & g & h 
\end{array} \right) \begin{pmatrix} \Bw \\ a\\ b \end{pmatrix} = \lambda_i(\wb{L}_n) \begin{pmatrix} \Bw \\ a\\ b \end{pmatrix}
\end{equation}
with $|a|, |b| \leq n^{-B}$ and $\Bw \in \R^{n-2}$. If we extract the top $n-2$ coordinates, this implies that
\[
\wb{L}_{n-2} \Bw + a \col_{n-1}(\wb{L}_{n}) + b \col_{n}(L_{n}) = \lambda_i(\wb{L}_n) \Bw.
\] 
For $B$ large enough, since we can control the size of $\col_{n-1}(\wb{L}_{n})$ and $\col_{n}(\wb{L}_{n})$, this implies that
\[
\|(\wb{L}_{n-2} - \lambda_i(\wb{L}_{n})) \Bw\|_2 \leq n^{-B/2}. 
\]
In other words, $\Bw$ is an approximate eigenvector of $\wb{L}_{n-2}$.

Additionally, the final coordinate of \eqref{eq:n-2minor} yields
\[
|\col_{n}(\wb{L}_{n})^T \Bw| \leq |ga| + |hb| + |\lambda_i(\wb{L}_n)||b|,
\] 
which for large enough $B$ implies that
\[
|\col_{n}(\wb{L}_{n})^T \Bw| \leq n^{-B/2}.
\]
The rest of the proof is almost identical to the proof of Theorem \ref{thm:gap:lap'}. Let $I_{n-2}$ be the set of vertices $v_1,\dots, v_{n-2}$ which has exactly one neighbor with $v_n$ and $v_{n-1}$. Then the neighbor switching process changes the coordinates over $I_{n-2}$ of $\col_{n}(\wb{L}_{n})$ randomly and independently. Furthermore, with probability $1- \exp(-\Theta(n))$, we know that 
$$ n/4 - n/8\le |I_{n-2}| \le n/4 + n/8.$$
Conditioned on the event of probability $1-n^{-\omega(1)}$ from Theorem \ref{thm:approx:eigenvector} (applied to $\wb{L}_{n-2}$ and with sufficiently large $B$) that $\LCD_{\kappa,\gamma}(\Bw_{I_{n-2}}) \ge n^{2A}$, by Theorem \ref{thm:small_ball_prob_LCD}, for $\delta = n^{-A-3}$ we have that 
\begin{align*}
	\P_{\col_{n}(\wb{L}_{n})}(|\Bw \cdot \col_{n}(\wb{L}_{n-2})| \le n^{-B/2}) &\leq \P_{\col_{n}(\wb{L}_{n})}(|\Bw \cdot \col_{n}(\wb{L}_{n})| \le n^{o(1)}\delta) \\
	&\le \sup_{b} \P_{\col_{n}(\wb{L}_{n})}(|\Bw_{I_{n-2}} \cdot \col_{n}(\wb{L_{n}})_{I_{n-2}}-b| \le n^{o(1)}\delta)\\
	&=O( n^{o(1)}\delta).\end{align*}
We can then take a union bound over the $\binom{n}{2}$ possible sets of pairs of coordinates.  
\end{proof}

\appendix

\bibliographystyle{plain}
\bibliography{lap_bib.bib}

\end{document}